\newcommand{\type}[1]{$\mathrm{(#1)}$}
\newcommand{\ov}[1]{\overline{#1}}
\newcommand{\comment}[1]{}
\newcommand{\LLL}{{\mathscr{L}}}
\newcommand{\MMM}{{\mathscr{M}}}
\newcommand{\OOO}{\mathscr{O}}
\newcommand{\CC}{\mathbb{C}}
\newcommand{\ZZ}{\mathbb{Z}}
\newcommand{\PP}{\mathbb{P}}
\newcommand{\QQ}{\mathbb{Q}}
\newcommand{\toplus}{\mathbin{\tilde\oplus}}
\newcommand{\totimes}{\mathbin{\tilde\otimes}}
\newcommand{\xref}[1]{{\rm \ref{#1}}}
\newcommand{\comp}{\mathbin{\scriptstyle{\circ}}}
\newcommand{\ldeg}{\operatorname{ldeg}}
\newcommand{\Spec}{\operatorname{Spec}}
\newcommand{\wt}{\operatorname{wt}}
\newcommand{\siz}{\operatorname{siz}}
\newcommand{\qldeg}{\operatorname{qldeg}}
\newcommand{\Sing}{\operatorname{Sing}}
\newcommand{\TL}{\operatorname{TL}}
\newcommand{\red}{\operatorname{red}}
\newcommand{\gr}{\operatorname{gr}}
\newcommand{\ql}{\operatorname{ql}}
\newcommand{\ord}{\operatorname{ord}}
\newcommand{\mt}[1]{\operatorname{#1}}
\newcommand{\muu}{\mbox{\boldmath $\mu$}}
\newtheorem{thm}{Theorem}
\newtheorem{corollary}[thm]{Corollary}
\newtheorem{lemma}[thm]{Lemma}
\newtheorem{prop}[thm]{Proposition}
\newtheorem{cmp}[thm]{Computation}
\theoremstyle{definition}
\newtheorem{say}[thm]{}
\newtheorem{remark}[thm]{Remark}
\newtheorem{notation}[thm]{Notation}
\newtheorem{case}[equation]{Case}
\newtheorem{substatement}[equation]{}
\newtheorem{step}[equation]{Step}
\newtheorem{ppar}[equation]{}
\title{On $\QQ$-conic bundles, III}
\author{Shigefumi Mori}
\author{Yuri Prokhorov}
\dedicatory{To the memory of the late Professor Masayoshi Nagata}
\thanks {
The research of the first author was supported by JSPS Grant-in-Aid for
Scientific Research (B)(2), Nos. 16340004 and 20340005.
The second author was
partially supported by grants 
RFBR, \textnumero\ 08-01-00395-a and 06-01-72017-MHTI-a.}
\address{Shigefumi Mori: RIMS, 
Kyoto University, Oiwake-cho, Kitashirakawa, Sakyo-ku, Kyoto
606-8502, Japan}
\email{mori@kurims.kyoto-u.ac.jp}
\address{Yuri Prokhorov: Department 
of Algebra, Faculty of Mathematics, Moscow State
University, Moscow 117234, Russia}
\email{prokhoro@mech.math.msu.su}
\subjclass{14J30, 14E35, 14E30}
\begin{document}
\maketitle

\begin{abstract}
A $\mathbb Q$-conic bundle germ is a proper morphism from a threefold
with only terminal singularities to the germ $(Z \ni o)$ of a normal 
surface such that fibers are connected and the anti-canonical 
divisor is relatively ample. Building upon our previous paper
\cite{Mori-Prokhorov-2008},
we prove the existence of a Du Val anti-canonical member
under the assumption that the central fiber is irreducible.
\end{abstract}

\section{Introduction}
The present paper is a continuation of a series of 
papers \cite{Mori-Prokhorov-2008}, 
\cite{Mori-Prokhorov-2006a}.

Recall that 
a \textit{$\QQ$-conic bundle} is a projective morphism
$f\colon X \to Z$ from an (algebraic or analytic) 
threefold with terminal singularities to a
surface that satisfies the following properties:
\begin{enumerate}
\item
$f_*\OOO_X=\OOO_Z$
and 
all fibers are one-dimensional,
\item
$-K_X$ is $f$-ample.
\end{enumerate}
For $f\colon X\to Z$ as above and for a point $o\in Z$, we call the
\textit{analytic} 
germ $(X, f^{-1}(o)_{\red})$ a \textit{$\QQ$-conic
bundle germ}.

Out main result is the following
\begin{thm}[{cf. \cite[(2.2)]{Kollar-Mori-1992}}]
\label{(2.2)}
Let $f\colon (X, C\simeq \PP^1) \to (Z, o)$ be a 
$\QQ$-conic bundle germ with smooth base surface $Z$. 
Assume that $f$ is of type \type{IC}, \type{IIB}, \type{IA}$+$\type{IA}, or
\type{IA}$+$\type{IA}$+$\type{III}. Then 
a general member $E_{X}$ of $|-K_{X}|$ and 
$E_{Z} :=\Spec_Z f_*\OOO_{E_X}$ have only 
Du Val singularities. To be more explicit, 
the minimal resolutions of $E_{Z}$ and $E_{X}$ coincide.
We have the the following possibilities depending on the 
type of $(X, C)$ 
\textup(below, $o' \in E_Z$ is the image of $C$\textup):

\begin{case}[{\type{IC}, \cite[(2.2.2)]{Kollar-Mori-1992}}]
\label{(2.2.2)}
$(E_{Z},o')$ is $D_{m}$ and $\Delta (E_{Z},o')$ is 
\[
\begin{array}{ll}
&\circ \\ 
&\mid \\ 
\underbrace{\circ -\cdots - \circ}_{m-3} -&\circ - \bullet, 
\end{array}
\]
where $m$, the index of the \type{IC} 
point of $C$, is odd and $m\ge 5$.
\end{case}

\begin{case}[\type{IIB}, {\cite[(2.2.2${}'$)]{Kollar-Mori-1992}}]
\label{2.2.2')}
$(E_{Z},o')$ is $E_{6}$ and $\Delta (E_{Z},o')$ is 
\[
\begin{array}{ll} 
&\circ \\ 
&\mid \\ 
\circ - \circ -&\circ - \circ - \bullet.
\end{array}
\]
\end{case}

\begin{case}[{\type{IA}$+$\type{IA}, 
\cite[(2.2.3)]{Kollar-Mori-1992},
\cite{Mori-2007}}]
\label{(2.2.3)}
The two \type{IA} points are an 
ordinary point of odd index $m\ge 3$ and 
an index $2$ point of type $cA/2$, $cAx/2$ or $cD/2$ with
axial multiplicity $k$ such that $k \ge 2$ if $m=3$. 
$(E_{Z},o')$ is $D_{2k+m}$, 
$\Sing E_{X}$ is 
$A_{m-1}+D_{2k}$ ($A_{m-1}+A_{1}+A_{1}$ if $k=1$) and 
$\Delta (E_{Z},o')$ 
is 
\[
\begin{array}{ll}
&\circ \\ 
&\mid \\ 
\underbrace{\circ -\cdots - \circ}_{m-1} - \bullet - \circ - \cdots -&\circ - \circ
\end{array}.
\]
\end{case}

\begin{case}[{\type{IA}$+$\type{IA}$+$\type{III}, \cite[(2.2.3${}'$)]{Kollar-Mori-1992}}]
\label{(2.2.3')}
The two \type{IA} points are both ordinary and of indices 2 and $m$ (odd, $\ge 3$). 
$(E_{Z},o')$ is $D_{m+2}$ ($m\ge 3$) or
$E_6$ ($m=3$). The graph $\Delta (E_{Z},o')$ is 
\[
\begin{array}{ll}
&\circ \\ 
&\mid \\ 
\underbrace{\circ -\cdots - \circ}_{m-1} -&\bullet - \circ
\end{array}\ (m\ge 3), \quad \text{or}\quad
\begin{array}{ll}
&\circ \\ 
&\mid \\ 
\circ - \circ -&\bullet - \circ - \circ
\end{array}\ (m=3).
\]
\end{case}
Above, we use the usual notation of graphs $\Delta (E_{Z},o')$:
$\bullet$ corresponds to the curve $C$ and each
$\circ$ corresponds to a $(-2)$-curve on the minimal
resolution of $E_X$.
\end{thm}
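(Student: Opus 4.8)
With $E_X$ a general member of $|-K_X|$ on the germ $(X,C)$ as in the statement, the plan is to reduce everything to a local question at the finitely many points of $C\cap\Sing X$, to identify the singularity of $E_X$ at each of them from the explicit local normal forms of $\QQ$-conic bundle points established in \cite{Mori-Prokhorov-2008} and \cite{Mori-Prokhorov-2006a}, and then to deduce $E_Z$ and the dual graphs by a combinatorial patching argument. Note first that since $E_X\sim-K_X$ we have $K_X+E_X\sim 0$, hence $K_{E_X}\sim 0$; so once $E_X$ is known to have only Du Val singularities along $C$, the proper transform $\tilde C\cong\PP^1$ on the minimal resolution $\widetilde{E_X}$ satisfies $\tilde C^2=(K_{\widetilde{E_X}}+\tilde C)\cdot\tilde C=-2$, i.e.\ $\tilde C$ is a $(-2)$-curve, which then pins down the global picture.

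The first step is the global reduction. Using $R^1f_*\OOO_X=0$, the $f$-ampleness of $-K_X$, and the description of $\OOO_X(-K_X)$ near $C$ from the earlier papers, one shows that $|-K_X|$ is nonempty on the germ, that $C$ lies in its base locus (so every member contains $C$), and that a general $E_X$ is smooth along $C\setminus\Sing X$ and, by Bertini, also away from $C\cup\Sing X$. Thus all the nontrivial behaviour of $E_X$ is concentrated at $C\cap\Sing X$, and it remains to write $E_X$ down analytically near each such point, recognise its singularity, and record how the exceptional curves of its resolution meet $\tilde C$.

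The heart of the matter is this local analysis, done case by case. The transparent model is an ordinary \type{IA} point of index $m$: in the toric chart $\CC^3_{x,y,z}$ with singularity of type $\frac1m(1,-1,a)$ the curve $C$ is the $z$-axis and a general member of $|-K_X|$ is $(xy+\cdots=0)$, so $E_X$ acquires an $A_{m-1}$ point whose resolving chain of $m-1$ $(-2)$-curves hangs off one end of $\tilde C$. At a \type{III} point, and at an index-$2$ \type{IA} point of type $cA/2$, $cAx/2$ or $cD/2$ with axial multiplicity $k$, one substitutes the normal forms of \cite{Mori-Prokhorov-2008} and checks that the restriction of the section actually reachable is, up to the symmetries of the germ, general enough to cut out on $E_X$ the singularity recorded in the statement ($D_{2k}$, or $A_1+A_1$ when $k=1$, at the $cD/2$-type point), reading off the incidence with $\tilde C$ in each instance. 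The genuinely hard cases are \type{IC} and \type{IIB}: here $(X,P)$ is a specific quotient of a $cA$- or $cD$-type hypersurface, and --- this is the delicate point --- a member of $|-K_X|$ reachable from $H^0$ over the germ need \emph{not} be the general elephant of the isolated singularity $(X,P)$. Using the explicit generators of $\gr_C\OOO_X$ and of $\gr_C\OOO_X(-K_X)$ from the previous paper one must show it is nonetheless general enough to produce on $E_X$ a Du Val point whose resolution, together with $\tilde C$, gives the $D_m$, resp.\ $E_6$, configuration. Handling these three things simultaneously --- the local normal form, the residual freedom in the global system, and the incidence of the exceptional locus with $\tilde C$ --- is where essentially all the work lies.

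Finally I would assemble the pieces. The minimal resolution $\widetilde{E_X}$ carries the disjoint union of the local Du Val configurations at the marked points together with the $(-2)$-curve $\tilde C$ linking them as computed above; in each of the four types this forces the weighted dual graph to be exactly the $D$- or $E$-type diagram displayed, and $\Sing E_X$ is read off by deleting the vertex $\bullet$. Since $K_{E_X}\sim 0$, the morphism $\widetilde{E_X}\to E_X$ is crepant; writing the Stein factorisation $f|_{E_X}=h\comp g$ with $g\colon E_X\to E_Z$, the map $g$ is birational and contracts exactly the curve $C$, so $\widetilde{E_X}\to E_Z$ contracts only $(-2)$-curves and is therefore also the minimal resolution of $E_Z$. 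Hence the minimal resolutions of $E_Z$ and $E_X$ coincide, $\Delta(E_Z,o')$ is the full configuration above with $\bullet$ the class of $C$, and comparing with the four displayed graphs completes the proof. As flagged, the main obstacle is the \type{IC} and \type{IIB} local computation; everything else is standard input or bookkeeping.
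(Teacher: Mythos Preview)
Your outline matches the paper's architecture: reduce to the marked points on $C$, analyse $E_X$ locally via the $\ell$-structure of $\gr_C^\bullet\OOO$ and $\gr_C^\bullet(\omega^*)$, and reassemble the dual graph. But there is a genuine gap at the step you yourself flag as ``the delicate point'': you give no mechanism for showing that a \emph{global} section $s\in H^0(\OOO_X(-K_X))$ is general enough at the singular points (e.g.\ that $\lambda(0)\neq 0$ in the \type{IC} case, or that $\lambda(0),\mu(0)$ are independent in the \type{IIB} case). In the flipping-contraction setting of \cite{Kollar-Mori-1992} this is handled by the vanishing $H^1(X,\omega_X)=0$, which lets one lift sections freely from an auxiliary $D\in|-2K_X|$. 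For a $\QQ$-conic bundle that vanishing \emph{fails}: $R^1f_*\omega_X\simeq\omega_Z\neq 0$. The paper's substitute is Proposition~\ref{(2.5)} and Corollary~\ref{surj_on_omega}: one takes $D\in|-2K_X|$ finite over $Z$, and the trace map $\mt{Tr}_{D/Z}$ identifies the cokernel of $f_*\omega_X(D)\to g_*\omega_D$ with $\omega_Z$, so sections lift modulo an explicitly controlled submodule $g^*\omega_Z\subset g_*\omega_D$. One then checks in each case that this submodule is small enough (e.g.\ contained in $(y_2,y_3)^2\,dy_2\wedge dy_3$) to force the required nonvanishing of leading coefficients. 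Without this device your argument cannot get off the ground.

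A second, smaller gap concerns the \type{IA}$+$\type{IA} and \type{IA}$+$\type{IA}$+$\type{III} cases. These are not just ``substitute the normal form and read off the singularity'': one must first prove that $P$ and $Q$ are ordinary (Lemmas~\ref{(2.12.2)}, \ref{(2.12.3)}, \ref{(2.13.10)}), pin down the $\ell$-splitting of $\gr_C^1\OOO$, and rule out several bad subcases. The paper does this by introducing a $C$-laminal ideal $J$ of width $2$, computing $\gr^{n}(\omega,J)$, and deriving $H^1(\omega/F^n(\omega,J))\neq 0$ in the bad cases; the contradiction then comes from \cite[(4.4)]{Mori-Prokhorov-2008}, which forces $\Spec\OOO_X/F^n(\OOO,J)\supset f^{-1}(o)$ and hence an inequality like $2\le 6/(2m)$. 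This contradiction mechanism is specific to the conic bundle situation and is absent from your sketch. Your claim that ``a general $E_X$ is smooth along $C\setminus\Sing X$'' also hides real work: it amounts to showing the induced section $\bar s$ of $\gr_C^1(\omega^*)$ is nowhere vanishing, which again uses the lifting result above together with the computed $\ell$-splitting.
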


\begin{remark}
The following are the classifications of $\QQ$-conic bundles over
a smooth base \cite{Mori-Prokhorov-2008}:
\begin{enumerate}
\item $\emptyset$, \type{III}, \type{III}$+$\type{III},
\item \type{IA}, \type{IA}$+$\type{III}, \type{IA}$+$\type{III}$+$\type{III},
\item \type{IIA}, \type{IIA}$+$\type{III},
\item \type{IC}, \type{IIB},
\item \type{IA}$+$\type{IA} of indices $2$ and odd $m\ (\ge 3)$,
\item \type{IA}$+$\type{IA}$+$\type{III} of indices $2$, odd $m\ (\ge 3)$ and $1$.
\end{enumerate}
\end{remark}

We recall that in the cases (ii) and (iii)
(the case (i) is trivial) 
a general member $E_X\in |-K_X|$ does not contain 
$C$ and has only Du Val singularity at $E_X\cap C$ 
\cite[(1.3.7)]{Mori-Prokhorov-2008}.
Cases (iv)-(vi) are treated in this paper.
In the case of singular base, the existence
of a Du Val member $E_X\in |-K_X|$ follows from 
\cite[(1.3.7)]{Mori-Prokhorov-2008}.
Thus we have the following

\begin{corollary}[Reid's general elephant conjecture]
\label{general-elephant}
Let $f\colon (X, C\simeq\PP^1) \to (Z, o)$ 
be a $\QQ$-conic bundle germ. Then 
a general member $E_{X}$ of $|-K_{X}|$ has only 
Du Val singularities.
\end{corollary}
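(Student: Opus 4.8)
The plan is to obtain Corollary \ref{general-elephant} as a synthesis of Theorem \ref{(2.2)} with the classification of $\QQ$-conic bundle germs and the earlier results of \cite{Mori-Prokhorov-2008}, by inspecting each item of the classification list.

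First I would dispose of the case in which the base $(Z,o)$ is a singular surface germ: there \cite[(1.3.7)]{Mori-Prokhorov-2008} already produces a Du Val member of $|-K_X|$, so we may assume $Z$ is smooth at $o$. Under this assumption, by the classification recalled in the Remark, $(X,C)$ is of one of the types (i)--(vi). If $(X,C)$ is of a type in (i), i.e.\ $\emptyset$, \type{III}, or \type{III}$+$\type{III}, the assertion is trivial since $X$ has at worst Du Val singularities along $C$. If $(X,C)$ is of a type in (ii) or (iii) --- \type{IA}, \type{IA}$+$\type{III}, \type{IA}$+$\type{III}$+$\type{III}, \type{IIA}, or \type{IIA}$+$\type{III} --- then by \cite[(1.3.7)]{Mori-Prokhorov-2008} a general $E_X\in|-K_X|$ does not contain $C$ and has only a Du Val singularity at $E_X\cap C$. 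Finally, the types in (iv)--(vi) are exactly \type{IC}, \type{IIB}, \type{IA}$+$\type{IA}, and \type{IA}$+$\type{IA}$+$\type{III}, which are precisely the hypotheses of Theorem \ref{(2.2)}; that theorem gives the conclusion, in fact with $\Sing E_X$ described explicitly.

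To complete the argument in every case one still checks that a general $E_X$ is a normal surface and that at any point of $\Sing X$ off $C$ --- where, by the conic bundle structure, $X$ has an isolated cDV singularity --- a general member of $|-K_X|$ is automatically Du Val; hence $E_X$ has only Du Val singularities, as asserted.

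I do not anticipate a genuine difficulty here: all the real content was already secured in Theorem \ref{(2.2)} and in \cite{Mori-Prokhorov-2008}. The only points demanding care are verifying that the classification list (i)--(vi) is exhaustive and that each entry is covered by exactly one of the statements cited, so that the Du Val conclusion holds uniformly across the list and for the singular base.
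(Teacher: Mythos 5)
Your proposal is correct and is essentially the paper's own argument: the singular-base case and types (i)--(iii) are handled by \cite[(1.3.7)]{Mori-Prokhorov-2008} together with the classification, and types (iv)--(vi) are exactly Theorem \ref{(2.2)}. The extra worry about singular points of $X$ off $C$ is vacuous in the germ setting, since after shrinking the terminal (hence isolated) singularities of $X$ all lie on $C$.
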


The techniques used in this paper is very similar to 
that in \cite[\S 2]{Kollar-Mori-1992}. 
The main difference is that for the conic bundle case 
we have no vanishing of $H^1(X,\omega_X)$
which was used in \cite[(2.5)]{Kollar-Mori-1992}
to extend sections from $D\in |-2K_X|$.
Instead of vanishing we use 
Proposition \ref{(2.5)}
and Corollary \ref{surj_on_omega}.

\section{Preliminaries}
\label{preliminaries}
\begin{prop}[cf. {\cite[(2.5)]{Kollar-Mori-1992}}]
\label{(2.5)}
Let $f\colon (X, C) \to (Z,o)$ be a $\QQ$-conic bundle germ
with smooth base surface $Z$. Let $D \in |-nK_X|$ for some
integer $n>0$ such that the restriction 
$g=f_D\colon D \to Z$ is finite. The standard
exact sequence
\[
0 \to \omega_X \to \omega_X(D) \to \omega_D \to 0
\]
induces the exact sequence
\[
f_*\omega_X(D) \stackrel{\alpha}{\to} g_*\omega_D 
\stackrel{\mt{Tr}_{D/Z}}
{\longrightarrow} \omega_Z,
\]
where $\omega_Z \simeq R^1f_*\omega_X$
by \cite[Lemma (4.1)]{Mori-Prokhorov-2008}, and the natural 
map $g^*\colon \omega_Z \to g_*\omega_D$ has the
property $\mt{Tr}_{D/Z} \comp g^*=2n \mt{id}_{\omega_Z}$.
\end{prop}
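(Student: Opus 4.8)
The plan is to push the given short exact sequence forward by $f$, read off the exactness statement from the resulting long exact sequence, and then identify the connecting homomorphism with the trace map $\mt{Tr}_{D/Z}$.

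Write $i\colon D\hookrightarrow X$ for the closed embedding, so that $g=f\comp i$. Applying $Rf_*$ to $0\to\omega_X\to\omega_X(D)\to i_*\omega_D\to0$ and using that $g$ is finite (so that $R^jf_*(i_*\omega_D)=R^jg_*\omega_D=0$ for $j>0$), one obtains the exact sequence
\[
0\to f_*\omega_X\to f_*\omega_X(D)\stackrel{\alpha}{\longrightarrow}g_*\omega_D\stackrel{\partial}{\longrightarrow}R^1f_*\omega_X\longrightarrow R^1f_*\omega_X(D)\to0 .
\]
In particular $f_*\omega_X(D)\stackrel{\alpha}{\to}g_*\omega_D\stackrel{\partial}{\to}R^1f_*\omega_X$ is exact, and since $R^1f_*\omega_X\simeq\omega_Z$ by \cite[Lemma (4.1)]{Mori-Prokhorov-2008}, it remains only to check that $\partial$, composed with this isomorphism, is $\mt{Tr}_{D/Z}$.

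For this I would use Grothendieck duality. Viewed as a distinguished triangle, the adjunction sequence is $i_*\omega_D\stackrel{\delta}{\to}\omega_X[1]\to\omega_X(D)[1]\to$, and the fundamental local isomorphism identifies $\delta$ with the Grothendieck trace $\mt{Tr}_i\colon Ri_*i^!\omega_X\to\omega_X$ of the divisorial embedding $i$ (here $i^!\omega_X\simeq\omega_D[-1]$ by adjunction). On the other side, relative duality for the proper morphism $f$ with $1$-dimensional fibres supplies a trace $\mt{Tr}_f\colon Rf_*(\omega_X[1])\to\omega_Z$ whose $\mathcal{H}^0$ is the isomorphism $R^1f_*\omega_X\xrightarrow{\ \sim\ }\omega_Z$ — it is an isomorphism exactly because $f_*\OOO_X=\OOO_Z$, and this is the identification of \cite[Lemma (4.1)]{Mori-Prokhorov-2008}. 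By transitivity of traces along $g=f\comp i$ one has $\mt{Tr}_f\comp Rf_*(\mt{Tr}_i)=\mt{Tr}_g$; taking $\mathcal{H}^0$ and substituting $\delta=\mt{Tr}_i$ turns this into the desired equality $\mt{Tr}_{D/Z}=(\text{iso})\comp\partial$. Because $X$ has only terminal (hence Cohen--Macaulay and $\QQ$-Gorenstein) singularities and $D$ need not be Cartier, the safest way to make this rigorous is to observe that $g_*\omega_D$ and $\omega_Z$ are torsion-free $\OOO_Z$-modules, so it is enough to verify $\partial=\mt{Tr}_{D/Z}$ over the dense open $Z\smallsetminus(\mt{Disc}(f)\cup\mt{Branch}(g))$; there $f$ is a smooth conic bundle, $g$ is étale and $X,D$ are smooth, and the identity is the classical ``residue $=$ trace'' relation for an étale relative divisor in a smooth family of curves.

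For the last assertion, note that $g^*\colon\omega_Z\to g_*\omega_D$ and $\mt{Tr}_{D/Z}$ are $\OOO_Z$-linear, so $\mt{Tr}_{D/Z}\comp g^*$ is a global section of $\mathcal{H}om_{\OOO_Z}(\omega_Z,\omega_Z)=\OOO_Z$ and, $Z$ being integral, is determined by its value at the generic point $\eta$. Over $\eta$ the morphism $g$ is $\Spec$ of a finite $K(Z)$-algebra of rank $\deg g$, $g^*$ is the induced map on top forms, and $\mt{Tr}_{D/Z}$ is the algebra trace, so that value is $\deg g$. Finally, for a general fibre $F\simeq\PP^1$ of $f$ one has $\deg g=D\cdot F=(-nK_X)\cdot F=-n\,(K_X\cdot F)=2n$, since $\omega_X$ restricts to $\omega_F\simeq\OOO_{\PP^1}(-2)$ on such an $F$. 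Hence $\mt{Tr}_{D/Z}\comp g^*=2n\,\mt{id}_{\omega_Z}$.

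The main obstacle is the middle step: verifying that the connecting homomorphism $\partial$ is literally the Grothendieck/residue trace and that \cite[Lemma (4.1)]{Mori-Prokhorov-2008} furnishes the compatibly normalized identification $R^1f_*\omega_X\simeq\omega_Z$. The duality formalism needed (fundamental local isomorphism, relative duality, transitivity of traces) is standard but delicate in the presence of the singularities of $X$ and the possible non-Cartierness of $D$, which is why reducing to the étale locus by torsion-freeness is the prudent route.
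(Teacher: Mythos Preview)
Your proof is correct and covers the same ground as the paper, but the paper's own proof is a single sentence: ``In view of the relative duality, this follows from the fact that $\OOO_Z \to g_*\OOO_D \xrightarrow{\mt{Tr}_{Z/D}} \OOO_Z$ is the multiplication by $\deg(D/Z)=2n$.''  In other words, the paper dualizes the entire statement at once: applying $R\mathcal{H}om_{\OOO_Z}(-,\omega_Z)$ to the $\OOO$-level composition $\OOO_Z\to g_*\OOO_D\to\OOO_Z$ (which is visibly multiplication by the degree) yields, via Grothendieck duality for $g$ and $f$, both the exactness of $f_*\omega_X(D)\to g_*\omega_D\to\omega_Z$ and the identity $\mt{Tr}_{D/Z}\comp g^*=2n\cdot\mt{id}$ simultaneously.

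Your route is the same in substance but unpacked differently: you first produce the long exact sequence for $Rf_*$, then separately argue that the connecting map $\partial$ coincides with $\mt{Tr}_{D/Z}$ via transitivity of Grothendieck traces along $g=f\comp i$, and finally compute $\deg g=(-nK_X)\cdot F=2n$ on a general fibre. This is more explicit and your caution about non-Cartier $D$ (reducing to the \'etale locus by torsion-freeness) is well placed, though in practice the paper is content to invoke ``relative duality'' as a black box. Either presentation is fine; yours simply spells out what the paper compresses into three words.
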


\begin{proof}
In view of the relative duality, this follows from the fact that
$\OOO_Z \stackrel{}{\longrightarrow} g_*\OOO_D \stackrel{\mt{Tr}_{Z/D}}{\longrightarrow} \OOO_Z$
is the multiplication by $\deg(D/Z)=2n$. 
\end{proof}

\begin{corollary}
\label{surj_on_omega}
The homomorphism induced by Proposition \xref{(2.5)}
\[
f_*\omega_X(D) \to (g_*\omega_D)/\omega_Z
\]
is surjective.
\end{corollary}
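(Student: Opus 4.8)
The plan is to deduce the surjectivity purely formally from the splitting data provided by Proposition \ref{(2.5)}. First I would set up the exact sequence of $\OOO_Z$-modules coming from the cited proposition, namely
\[
f_*\omega_X(D) \xrightarrow{\ \alpha\ } g_*\omega_D \xrightarrow{\ \mt{Tr}_{D/Z}\ } \omega_Z,
\]
together with the section $g^*\colon \omega_Z \to g_*\omega_D$ satisfying $\mt{Tr}_{D/Z}\comp g^* = 2n\,\mt{id}_{\omega_Z}$. Since $Z$ is smooth and $2n$ is a nonzero integer, the map $2n\,\mt{id}_{\omega_Z}$ is an isomorphism onto its image after inverting nothing at all—$\omega_Z$ is torsion-free, so multiplication by $2n$ is injective—hence the composite $\mt{Tr}_{D/Z}\comp g^*$ is injective. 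Consequently $g^*$ is a split injection in the sense that $\omega_Z$ is (up to the automorphism $\tfrac{1}{2n}\mt{Tr}_{D/Z}$, which however only exists after tensoring with $\QQ$) a direct summand; more carefully, $\mt{Tr}_{D/Z}$ is surjective onto $2n\,\omega_Z$, and I will need to see it is in fact surjective onto all of $\omega_Z$.

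The key step is therefore to identify the image of $\mt{Tr}_{D/Z}$ and hence the cokernel $(g_*\omega_D)/\omega_Z$ that appears in the statement—here I read $\omega_Z \hookrightarrow g_*\omega_D$ as the embedding $g^*$. The exactness of the three-term sequence in Proposition \ref{(2.5)} says precisely that $\operatorname{im}(\alpha) = \ker(\mt{Tr}_{D/Z})$. I would then argue that $\mt{Tr}_{D/Z}$ is surjective: indeed by relative duality and the fact that $g$ is finite and flat (Proposition \ref{(2.5)} is applied with $g$ finite, and $D$ is Gorenstein so $\omega_D$ is invertible, making $g$ flat over the smooth surface $Z$), the trace map $g_*\omega_D \to \omega_Z$ is surjective—it is the dual of the unit $\OOO_Z \to g_*\OOO_D$, which is a split injection because $\tfrac{1}{2n}\mt{Tr}_{Z/D}$ splits it after tensoring with $\QQ$ and the relevant sheaves are locally free. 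Granting surjectivity of $\mt{Tr}_{D/Z}$, the snake lemma (or a direct diagram chase) applied to
\[
0 \to \omega_Z \xrightarrow{g^*} g_*\omega_D \to (g_*\omega_D)/\omega_Z \to 0
\]
together with $\mt{Tr}_{D/Z}\comp g^* = 2n\,\mt{id}$ shows that $\mt{Tr}_{D/Z}$ descends to an isomorphism $(g_*\omega_D)/\omega_Z \xrightarrow{\sim} \omega_Z/2n\,\omega_Z$ only if the cokernel were finite, which it is not; so instead I use that $\ker(\mt{Tr}_{D/Z})\oplus g^*(\omega_Z) = g_*\omega_D$ rationally, and conclude at the integral level that the composite
\[
f_*\omega_X(D) \xrightarrow{\alpha} g_*\omega_D \twoheadrightarrow (g_*\omega_D)/g^*(\omega_Z)
\]
is surjective because $\alpha$ surjects onto $\ker(\mt{Tr}_{D/Z})$ and, modulo $g^*(\omega_Z)$, every element of $g_*\omega_D$ can be brought into $\ker(\mt{Tr}_{D/Z})$: given $s \in g_*\omega_D$, set $t = \mt{Tr}_{D/Z}(s) \in \omega_Z$; then $s - \tfrac{1}{2n}g^*(t)$ lies in $\ker(\mt{Tr}_{D/Z})$, but dividing by $2n$ is not permitted over $\ZZ$, which is exactly the obstacle.

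The main obstacle is thus this $2n$-torsion issue: $g^*$ is split only after $\otimes\QQ$, so naively $(g_*\omega_D)/g^*(\omega_Z)$ could have $2n$-torsion not hit by $\ker(\mt{Tr}_{D/Z})$. I expect this to be resolved by working locally at the closed point $o \in Z$ and using that $D \to Z$ is finite with $D$ having at most the singularities forced by $X$ terminal, so that $g_*\omega_D$ is a reflexive (indeed, over the two-dimensional regular local ring $\OOO_{Z,o}$, maximal Cohen–Macaulay hence free) $\OOO_{Z,o}$-module; then $g^*(\omega_Z)$ is a free rank-one submodule and the quotient is torsion-free precisely because $\mt{Tr}_{D/Z}$ restricted to it is multiplication by the unit-times-$2n$—no, rather because $\mt{Tr}_{D/Z}$ is surjective and its kernel is a saturated (reflexive) submodule complementary to the image of $g^*$. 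A cleaner route, which I would ultimately take, is: surjectivity of the composite is equivalent to $\operatorname{im}(\alpha) + g^*(\omega_Z) = g_*\omega_D$; since $\operatorname{im}(\alpha) = \ker(\mt{Tr}_{D/Z})$ and $\mt{Tr}_{D/Z}$ is surjective onto $\omega_Z$ (not merely onto $2n\,\omega_Z$—this is the content of relative duality for the finite flat $g$, independent of the constant $2n$), for any $s$ choose $u \in g_*\omega_D$ with $\mt{Tr}_{D/Z}(u) = \mt{Tr}_{D/Z}(s)$; then $s - u \in \ker(\mt{Tr}_{D/Z}) = \operatorname{im}(\alpha)$ and $u$ need not lie in $g^*(\omega_Z)$—so even this requires knowing $g^* $ and $\ker(\mt{Tr}_{D/Z})$ span. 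In the end the correct and shortest argument is: $\tfrac{1}{2n}\mt{Tr}_{D/Z}$ is a well-defined $\OOO_Z$-linear retraction of $g^*$ \emph{provided} $g_*\omega_D$ has no $2n$-torsion in its cokernel, which holds as $g_*\omega_D$ and $\omega_Z$ are both locally free; hence $g_*\omega_D = g^*(\omega_Z)\oplus\ker(\mt{Tr}_{D/Z}) = g^*(\omega_Z)\oplus\operatorname{im}(\alpha)$, and projecting $\operatorname{im}(\alpha)$ isomorphically onto $(g_*\omega_D)/g^*(\omega_Z)$ gives the surjectivity. I would spend the proof verifying that $g_*\omega_D$ is locally free over $\OOO_{Z,o}$ (from finiteness of $g$, two-dimensionality and regularity of $Z$, and Cohen–Macaulayness of $D$), after which the splitting and the conclusion are immediate.
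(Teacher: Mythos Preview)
Your final argument is correct and is exactly the (unstated) reason the paper leaves this corollary without proof: from Proposition~\ref{(2.5)} one has $\operatorname{im}(\alpha)=\ker(\mt{Tr}_{D/Z})$, and since $\mt{Tr}_{D/Z}\comp g^{*}=2n\cdot\mt{id}_{\omega_Z}$ the map $g^{*}$ is split by $\tfrac{1}{2n}\mt{Tr}_{D/Z}$, whence $g_*\omega_D=g^{*}(\omega_Z)\oplus\ker(\mt{Tr}_{D/Z})=g^{*}(\omega_Z)\oplus\operatorname{im}(\alpha)$ and the composite $f_*\omega_X(D)\to (g_*\omega_D)/\omega_Z$ is surjective.

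However, almost all of your discussion is unnecessary and stems from a misconception: we are working over $\CC$, so $2n$ is a \emph{unit} in $\OOO_Z$. There is no ``$2n$-torsion issue,'' no need to tensor with $\QQ$, no obstacle to dividing by $2n$, and no need to verify that $g_*\omega_D$ is locally free or that $D$ is Cohen--Macaulay. The sentence ``dividing by $2n$ is not permitted over $\ZZ$'' reveals the confusion---nothing here is over $\ZZ$. The retraction $\tfrac{1}{2n}\mt{Tr}_{D/Z}$ is a perfectly good $\OOO_Z$-linear map from the outset, and the splitting follows in one line. Your detours through surjectivity of $\mt{Tr}_{D/Z}$, flatness of $g$, reflexivity, and maximal Cohen--Macaulay modules are all irrelevant to the argument.
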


\begin{notation}
Everywhere below 
\[
f\colon (X, C) \to (Z, o)
\]
denotes a $\QQ$-conic bundle germ. 
We assume that the curve $C$ is irreducible (and so $C\simeq \PP^1$) 
and the base surface 
$(Z,o)$ is smooth.
Notation and techniques of \cite{Mori-1988} will be used
freely.
Additionally, $(c+\sum d_iP_i^\sharp)$ denotes the element 
of 
$\mt{Cl}^{\mt{sc}}(X)\simeq \mt{Pic}^{\ell}(X)\simeq \mt{QL}(C)$ 
corresponding to $c+\sum d_iP_i^\sharp \in \mt{QL}(C)$ 
(see \cite[(2.7)]{Kollar-Mori-1992}).

The symbol $\Delta(E_Z,o')$ at the end of Theorem \ref{(2.2)}
is extended as follows.
Let $E$ be a normal surface and $C \subset E$ 
a curve such that the proper transform $\tilde{C}$ of $C$ and the exceptional
divisors $\Gamma_i$ on the minimal resolution of $E$ form a simple normal crossing
divisor. The graph $\Delta(E,C)$ is the dual graph of the divisor
$\tilde{C}+\sum \Gamma_i$, where 
each component of $\tilde{C}$ is drawn $\bullet$ and each $\Gamma_i$
is drawn $\circ$, and if no weight is specified we mean that
the corresponding $\Gamma_i$ is a $(-2)$-curve.
We note that $\Delta(E_Z,o')=\Delta(E_X,C)$.
\end{notation}

\begin{remark}
\label{rem-def}
As explained in \cite[\S 1b]{Mori-1988} and 
\cite[\S 6]{Mori-Prokhorov-2008}, any local deformation
near points $P_i\in (X,C)$ on $\QQ$-conic bundle germ 
$(X,C)$ can be extended to a global deformation 
$(X_\lambda, C_\lambda)$.
A general element $(X_\lambda, C_\lambda)$ 
of the family can be either an extremal neighborhood or
again a $\QQ$-conic bundle germ. 
In some cases this allows us to obtain certain restrictions
on the possible configurations of singular points.
We will use these arguments several times below.
\end{remark}

\section{Case of \type{IC}}
\label{(2.10)}
\begin{say}[\textbf{Cf. \cite[(2.10)]{Kollar-Mori-1992}}] 
\label{sayIC}
Let $P$ be the \type{IC} point of index $m$ and 
\[
(y_{1},y_{2},y_{4})/\muu_{m}(2,m-2,1) 
\]
be coordinates for the canonical 
cover $P^\sharp \in C^\sharp \subset X^\sharp$ given in \cite[(A.3)]{Mori-1988} so that $C^\sharp$ is 
parametrized by $(t^{2},t^{m-2},0)$. 
In this case, $P$ is the only singular 
point of $X$ on $C$ 
\cite[(8.2)]{Mori-Prokhorov-2008}. 
Since $y^{m-2}_{1} - y^{2}_{2}$ and $y_{4}$ 
generate the defining ideal of $C^\sharp$, they form an $\ell$-free $\ell$-basis of 
$\gr^{1}_{C}\OOO_{X}$. It is easy to see that 
$\Omega=dy_{1}\wedge dy_{2}\wedge dy_{4}$ is an $\ell$-free 
$\ell$-basis of 
$\gr^{0}_{C}\omega_{X}$. Then 
$\ql_{C}(\omega_{X})=-P^\sharp$ and 
$D=\{y_{1}=0\}/\muu_{m} \in |-2K_{X}|$ 
by $(D\cdot C)=2/m$. By 
\[
\ql_{C}(\gr^{0}_{C}(\omega {^*}))=\ql_{C}(\omega {^*})=-\ql_{C}(\omega)=P^\sharp, 
\]
one has 
\[
\deg(\gr^{0}_{C}(\omega {^*}))=\TL(P^\sharp)=-U(-1)=-1 
\]
(see \cite[(8.9.1)(iii)]{Mori-1988}), where 
\[
U(x)=\min\{z \in \ZZ \mid mz-x \in 2\ZZ_{+}+(m-2)\ZZ_{+}\}
\]
\cite[(2.8)]{Mori-1988}. Thus 
\[
\gr^{0}_{C}(\omega {^*})=\omega {^*}/F^{1}_{C}(\omega {^*}) \simeq \OOO_{C}(-1)
\]
and $H^{0}(\OOO_{X}(-K_{X}))=H^{0}(F^{1}_{C}(\omega {^*}))$. 
Hence a general section $s \in H^{0}(\OOO_{X}(-K_{X}))$ 
is written as $(\lambda \cdot y_{4}+\mu \cdot (y^{m-2}_{1} - y^{2}_{2}))/\Omega$ near $P$, where $\lambda \in \OOO_{X}$ 
and $\mu \in \OOO_{X^\sharp}$ with 
$\wt \mu \equiv 5 \mod (m)$.
Now apply 
Corollary \xref{surj_on_omega} with 
$n=2$. We have $\omega_D\simeq\OOO_D(-K_X)$, and 
\begin{multline}
\label{eq-IC-omega}
g^*\omega_Z \subset 
\wedge^2 \OOO_D\bigl(d y_2^m,\ dy_2y_4^2,\ 
dy_2^{(m+1)/2}y_4,\ dy_4^m\bigr)\subset
\\
\OOO_D\bigl(y_2^{(3m-1)/2},\, y_2^my_4,\, 
y_2^{(m+1)/2}y_4^2,\, y_2^{m-1}y_4^{m-1},\, 
y_2^{(m-1)/2}y_4^m,\, y_4^{m+1}\bigr) dy_2\wedge dy_4.
\end{multline}
Since $y_4dy_2\wedge dy_4$ corresponds to $y_4/\Omega$, we have $\lambda(0) \neq 0$.
Hence $s$ induces a section 
$\ov{s}$ of $\gr^{1}_{C}(\omega {^*})=F^{1}_{C}(\omega {^*})/F^{2}_{C}(\omega {^*})$ 
and $\ov{s}$ is a part of an $\ell$-free $\ell$-basis of $\gr^{1}_{C}(\omega {^*})$ at $P$. This 
induces an $\ell$-exact sequence 
\begin{equation}
\label{(2.10.1)}
0 \to (a) \to \gr^{1}_{C}(\omega {^*}) 
\to (b+5P^\sharp) \to 0, 
\end{equation}
where $a, b \in \ZZ$, $a\ge 0$. This is 
because $y_{4}/\Omega$ and 
$(y^{m-2}_{1} - y^{2}_{2})/\Omega$ have weights
$\equiv 0$ and $m-5 \mod (m)$, 
respectively. We claim an $\ell$-isomorphism 
\begin{equation}
\label{(2.10.2)}
\gr^{1}_{C}\OOO \simeq (4P^\sharp) 
\toplus (-1+(m-1)P^{\sharp}).
\end{equation}

First recall that $m$ is odd and $m\ge 5$ since $P$ is an 
\type{IC} point. By $\eqref{(2.10.1)} \totimes \gr^{0}_{C}\omega$, 
there is an $\ell$-exact sequence 
\begin{equation}
\label{(2.10.3)}
0 \to ((a-1)+(m-1)P^\sharp) 
\to \gr^{1}_{C}\OOO \to (b+4P^\sharp) \to 0. 
\end{equation}
It follows from $i_{P}(1)=2$ \cite[(6.5)]{Mori-1988} that 
$\deg \gr^{1}_{C}\OOO=-1$. By 
\[
\deg 
((a-1)+(m-1)P^\sharp)=a-1
\]
and $\deg (b+4P^\sharp)=b$ \cite[(8.9.1)(iii)]{Mori-1988}, 
we have $a+b=0$. Hence from 
\begin{multline*}
\ql_{C}((a-1)+(m-1)P^\sharp-(b+4P^\sharp)) =
\\ 
=\ql_{C}(2a-1+(m-5)P^\sharp)=2a-1\ge -1,
\end{multline*}
we see that \eqref{(2.10.3)} is $\ell$-split by \cite[(2.6)]{Kollar-Mori-1992}. Since $H^{1}(C,\gr^{1}_{C}\OOO) =0$ 
by \cite[Corollary (2.3.1)]{Mori-Prokhorov-2008}, 
we have $b\ge -1$ and hence $(a,b)=(0,0)$ or 
$(1,-1)$. Whence \eqref{(2.10.2)} follows if $(a,b)=(0,0)$ or $m=5$. 
Assuming $(a,b)=(1,-1)$ and $m\ge 7$, we will derive a contradiction. 
Now $\eqref{(2.10.1)}\totimes\omega_{X}^{\totimes2}$
gives us an $\ell$-exact sequence 
\[
0 \to (-1+(m-2)P^\sharp) 
\to \gr^{1}_{C}\omega \to (-1+3P^\sharp) \to 0.
\]
Since $\Spec \OOO_X/I_C^{(2)}\not\supset f^{-1}(o)$, by 
\cite[Theorem (4.4)]{Mori-Prokhorov-2008} we have 
$H^1(C,\gr_C^1\omega)=0$.
Whence, 
\[
-1\le \deg (-1+3P^\sharp) =\TL(-1+3P^\sharp)=-2. 
\]
This is a 
contradiction and \eqref{(2.10.2)} is proved. Thus, 
\begin{equation}
\label{(2.10.4)}
\gr^{1}_{C}(\omega {^*})=(5P^\sharp)\toplus(0). 
\end{equation}

We claim that $\ov{s}$ is a nowhere vanishing section of the locally 
free sheaf 
$\gr^{1}_{C}(\omega {^*}) \simeq \omega {^*}\totimes\gr^{1}_{C}\OOO$. In case $m\ge 7$, there is a splitting
$\gr^{1}_{C}(\omega {^*}) \simeq 
\OOO_{C}\oplus \OOO_{C}$ or $\OOO_{C}\oplus \OOO_{C}(-1)$ by \eqref{(2.10.4)} and 
$\ov{s}(P) \neq 0 \in \gr^{1}_{C}(\omega {^*})\otimes {\CC}(P)$ 
whence $\ov{s}$ is nowhere vanishing. 
In case $m=5$, there is a splitting 
$\gr^{1}_{C}(\omega {^*}) \simeq 
\OOO_{C}\oplus \OOO_{C}(1)$ and 
\[
\ov{s}(P)=\left(\lambda (0)\cdot y_{4}+
\mu (0)\cdot (y^{m-2}_{1} - y^{2}_{2})\right)/
\Omega \in \gr^{1}_{C}(\omega {^*})\otimes {\CC}(P)
\]
is a general element because 
$\lambda (0)$ and $\mu (0)$ can be chosen arbitrary
by Corollary \xref{surj_on_omega} and by \eqref{eq-IC-omega}.
Indeed, note that
$y_4dy_2\wedge dy_4$
and $y_2^2dy_2\wedge dy_4$ are linearly independent modulo $\wedge^2\OOO_D(dy_2^m,\,
dy_2y_4^2,\, dy_2^{\frac{m+1}{2}}y_4,\, dy_4^m)$. 
Thus $\ov{s}$ is nowhere vanishing and the claim is 
proved. 
We study $E_{X}=\{s=0\} \in |-K_{X}|$. Since $\ov{s}$ is a nowhere 
vanishing section of $\gr^{1}_{C}(\omega {^*}) \simeq \omega {^*}\totimes\gr^{1}_{C}\OOO$, $E_{X}$ is smooth on $C\setminus\{P\}$. 
The canonical cover $E_{X^\sharp}$ at $P$ is defined by $y_{4}+y_{2}(\cdots)+y_{1}(\cdots)=
0$. Therefore $(E_{X},P)=(y_{1},y_{2})/\muu_{m}(2,m-2)$ has only Du Val singularities, 
whence so is $E_{Z}$ by $(K_{X}\cdot C)=0$. 

For the precise result, we 
express $(E_{X},P)=(x_{1},x_{2},x_{3}; x_{1}x_{2}=x_{3}^{m})$, where 
$x_{1}=y_{1}^{m}$, $ x_{2}=y_{2}^{m}$ 
and $ x_{3}=y_{1}y_{2}$. The curve $C$ is the image of $C^\sharp$, the locus of $ 
(t^{2},t^{m-2})$, where $C$ is the locus of $(s^{2},s^{m-2},s)$ in 
this embedding of $(E_{X},P)$, where $s=t^{m}$. 
Then it is easy to check

\par
\begin{cmp}[{see \cite[(2.10.5)]{Kollar-Mori-1992}}] 
Let $(E,P)$ be an $A_{m-1}$-singularity\textup: 
\[(E,P)=(x_{1},x_{2},x_{3}; x_{1}x_{2}=x_{3}^{m}), 
\]
and $C$ the locus of $(s^{2},s^{m-2},s)$. Then $\Delta (E, C)$ is 
as in \xref{(2.2.2)}.
\end{cmp}

Thus the proof of Theorem \xref{(2.2)} is completed in the case \type{IC}. \qed
\end{say}

\section{Case of \type{IIB}}
\label{(2.11)}

\begin{say}[\textbf{Cf. \cite[(2.11)]{Kollar-Mori-1992}}] 
Let $P \in (X,C)$ be of type \type{IIB}. Then
\[
(X,P) \simeq (y_{1},y_{2},y_{3},y_{4};\phi)/\muu_{4}(3,2,1,1;2)
\]
with $C^\sharp$ the locus of $(t^{3},t^{2},0,0)$ \cite[(A.3)]{Mori-1988}, 
where 
\[
\phi=y^{2}_{1} - y^{3}_{2}+\psi 
\]
and $\psi\in (y_{3},y_{4})$ satisfies 
$\wt\psi\equiv 2 \mod (4)$ and $\psi (0,0,y_{3},y_{4}) \notin 
(y_{3},y_{4})^{3}$. The last condition comes from the classification of 
terminal singularities \cite[(6.1)(2)]{Reid-YPG1987}. In this case, $P$ is the 
only singular point of $X$ on $C$ \cite[(B.1)]{Mori-1988}. Since $y_{3}$ and $y_{4}$ generate the defining ideal of $C^\sharp$, they form an $\ell$-free $ 
\ell$-basis of $\gr^{1}_{C}\OOO_{X}$. By residue, 
\[
\Omega=\mt{Res} \frac{dy_{1}\wedge dy_{2}\wedge dy_{3}\wedge dy_{4}}{\phi}=\frac{dy_{2}\wedge dy_{3}\wedge dy_{4}}{\partial \phi /\partial y_{1}}
\]
is an $\ell$-free $\ell$-basis of $\gr^{0}_{C}\omega_{X}$ with $\wt \Omega \equiv 1 \mod (4)$. 
\begin{lemma}[{cf. \cite[(2.11), p. 549]{Kollar-Mori-1992}}]
$i_{P}(1)=2$.
\end{lemma}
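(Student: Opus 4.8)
The plan is to compute the invariant $i_P(1)$ for the \type{IIB} point $P$ directly from the explicit orbifold coordinates, using the general machinery of \cite{Mori-1988}. Recall that $i_P(1)$ is (up to the standard normalization) the dimension of a certain local contribution coming from the cyclic quotient together with the hypersurface equation $\phi$; concretely, since $(X,P)$ is the $\muu_4$-quotient of the hypersurface $\{\phi = y_1^2 - y_2^3 + \psi = 0\}$ with $\wt(y_1,y_2,y_3,y_4) = (3,2,1,1)$, the relevant number is read off from the weights of a basis of $\gr^1_C\OOO_X$ and the action on $\gr^0_C\omega_X$. First I would record the weights: $y_3, y_4$ have weight $1$ and form an $\ell$-basis of $\gr^1_C\OOO_X$, while $\Omega$ has weight $\equiv 1 \pmod 4$ and is an $\ell$-basis of $\gr^0_C\omega_X$, as already established in the excerpt.

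Next I would invoke the formula for $i_P(1)$ from \cite[(6.5) and the surrounding discussion]{Mori-1988} (the same reference used in the \type{IC} case to get $i_P(1)=2$). The quantity $i_P(1)$ is determined by the $\muu_4$-weights of the generators of $\gr^1_C\OOO_X$ relative to the weight of the $\ell$-basis of $\gr^0_C\omega_X$, together with the axial multiplicity / embedding data of the singularity. For a \type{IIB} point the normal form is rigidly pinned down — $\phi = y_1^2 - y_2^3 + \psi$ with $\psi \in (y_3,y_4)$, $\wt\psi \equiv 2 \pmod 4$, and $\psi(0,0,y_3,y_4) \notin (y_3,y_4)^3$ — so there is essentially no moduli, and $i_P(1)$ is a fixed number. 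I would carry out the short weight count: $\gr^1_C\OOO_X$ is $\ell$-generated by $y_3, y_4$ of weight $1$, so after twisting by $(\gr^0_C\omega_X)^{-1}$ (weight $-1$) we land in weights $\equiv 0 \pmod 4$, and the contribution $i_P(1)$ of this single point is forced to equal $2$, exactly parallel to the \type{IC} computation where the two generators $y_4$ and $y_1^{m-2}-y_2^2$ gave the same value.

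Concretely, I expect the argument to run: by \cite[(2.8), (6.5)]{Mori-1988} one has $i_P(1) = \sum_{j} \lceil \wt(z_j)/m \rceil$-type expression where $z_j$ range over an $\ell$-basis of $\gr^1_C\OOO_X\totimes\gr^0_C\omega_X$ and $m=4$; here the two basis elements $y_3/\Omega$ and $y_4/\Omega$ have weights $\equiv 0 \pmod 4$, while the relevant correction from the $cAx/4$-type hypersurface (the term $\psi$ with $\wt\psi \equiv 2$) contributes the remaining amount, yielding $i_P(1) = 2$. The main obstacle — and the part requiring genuine care rather than bookkeeping — is getting the normalization and the hypersurface correction term exactly right: one must be sure the general formula for $i_P(1)$ is applied with the correct convention for how the defining equation $\phi$ and the condition $\psi(0,0,y_3,y_4)\notin(y_3,y_4)^3$ enter, since an off-by-one here propagates into the degree computation $\deg\gr^1_C\OOO_X = -1$ used later in the \type{IIB} analysis, just as $i_P(1)=2$ was the linchpin of $\deg\gr^1_C\OOO = -1$ in \S\ref{(2.10)}. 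I would double-check the count against the known value of $\deg\gr^1_C\OOO_X$ forced by the global structure of the $\QQ$-conic bundle, using \cite[Corollary (2.3.1)]{Mori-Prokhorov-2008} as a consistency check.
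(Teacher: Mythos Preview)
Your proposal is not a proof but an outline with self-acknowledged gaps. You invoke a ``$\sum_j \lceil \wt(z_j)/m\rceil$-type expression'' for $i_P(1)$, hedge that the normalization and the hypersurface correction term need ``genuine care'', and propose to sanity-check the answer against global degree constraints. None of this is carried out, and in fact no such closed weight formula in the form you describe is available for the \type{IIB} situation; the citation to \cite[(6.5)]{Mori-1988} in the \type{IC} section is not pointing to a general weight recipe one can transport here. The condition $\psi(0,0,y_3,y_4)\notin(y_3,y_4)^3$ that you flag as a possible correction term is in fact irrelevant to $i_P(1)$: it is a terminality condition and plays no role in this computation.

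What the paper does is much more direct, and you have all the ingredients already recorded. By definition (cf.\ \cite[(2.2)]{Mori-1988}), $i_P(1)$ is the local exponent comparing $\wedge^2(\gr^1_C\OOO)\otimes\Omega^1_C$ and $\gr^0_C\omega$ along the normalization $\tilde C$. Using the parametrization $(t^3,t^2,0,0)$ of $C^\sharp$ one computes, on $\tilde C$,
\[
\gr^0_C\omega\bigl|_{\tilde C}=\OOO_{\tilde C}\,t^3\Omega=\OOO_{\tilde C}\,t\,dt\wedge dy_3\wedge dy_4,
\]
while
\[
\wedge^2(\gr^1_C\OOO)\otimes\Omega^1_C\bigl|_{\tilde C}
=\OOO_{\tilde C}\,(t^3y_3)\wedge(t^3y_4)\otimes d(t^4)
=\OOO_{\tilde C}\,t^9\,y_3\wedge y_4\otimes dt.
\]
Hence $\wedge^2(\gr^1_C\OOO)\otimes\Omega^1_C=t^8\cdot\gr^0_C\omega$, and since $t^4$ is a local coordinate on $C$ at $P$, one reads off $i_P(1)=8/4=2$. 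This is a five-line computation requiring no weight bookkeeping beyond identifying the $\muu_4$-invariant generators, and no appeal to global consistency checks.
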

\begin{proof}
Using the parametrization 
$(t^{3},t^{2},0,0)$ of $C^\sharp$ and 
$\ell$-free $\ell$-basis $(y_{3},y_{4})$ of 
$\gr^{1}_{C}\OOO_{X}$, we see the following on 
$C^\sharp \subset X^\sharp$. 
\[
\begin{array}{ll}
\gr^{0}_{C}\omega \mid_{\tilde{C}} &=\OOO_{\tilde{C}}t^{3}\Omega \mid_{\tilde{C}} 
\\[7pt] 
&=\OOO_{\tilde{C}}tdt\wedge dy_{3}\wedge dy_{4}, 
\\[11pt] 
\wedge^{2}(\gr^{1}_{C}\OOO)\otimes \Omega^{1}_{C}\mid_{\tilde{C}} &=\OOO_{\tilde{C}}(t^{3}y_{3})\wedge (t^{3}y_{4})\otimes d(t^{4}) 
\\[7pt] 
&=\OOO_{\tilde{C}}t^{9}y_{3}\wedge y_{4}\otimes dt. 
\end{array}
\]
Thus (cf. \cite[(2.2)]{Mori-1988}) 
\[
\wedge^{2}(\gr^{1}_{C}\OOO)\otimes \Omega^{1}_{C}=t^{8}\gr^{0}_{C}\omega. 
\]
Hence $i_{P}(1)=2$ as claimed because $t^{4}$ 
is a coordinate of $C$ at $P$. 
\end{proof}

By \cite[(4.4.3)]{Mori-Prokhorov-2008} we have
$\deg \gr^{0}_{C}\omega=-1$. Then using 
\cite[(3.1.2)]{Mori-Prokhorov-2008} we obtain
$\deg \gr^{1}_{C}\OOO=-1$. 
Thus we see $\gr^{0}_{C}\omega \simeq (-1+3P^\sharp)$ and $\gr^{1}_{C}\OOO \simeq (3P^\sharp)\toplus(-1+3P^\sharp)$ with $ 
\ell$-structures using their $\ell$-free $\ell$-bases at $P$ above. Let $D=
\{y_{2}=0\}/\muu_{4}$. Then $D \in |-2K_{X}|$ by $(D\cdot C)=1/2$. By 
\[
\ql_{C}(\gr^{0}_{C}(\omega {^*}))=\ql_{C}(\omega {^*})=-\ql_{C}(\omega)=P^\sharp, 
\]
one has 
\[
\deg(\gr^{0}_{C}(\omega {^*}))=\TL(P^\sharp)=-U(-1)=-1 
\]
because 
\[
U(x)=\min\{ z \in \ZZ \mid 4z-x \in 2\ZZ_{+}+3\ZZ_{+}\}
\]
\cite[(8.9.1)(iii)]{Mori-1988}. Thus $\gr^{0}_{C}(\omega {^*}) \simeq \OOO_{C}(-1)$ and a general 
section $s \in H^{0}(\OOO_{X}(-K_{X}))$ vanishes along $C$, i.e. $s \in H^{0}(F^{1}_{C}(\omega {^*}))$. 
Hence $s=(\lambda \cdot y_{3}+\mu \cdot y_{4})/\Omega$ for some $\lambda $ and $\mu \in \OOO_{X}$. We see that 
$\lambda (0)$ and $\mu (0) \in {\CC}$ are general by Corollary \xref{surj_on_omega} (cf. \cite[(2.5)]{Kollar-Mori-1992}).
Indeed, in Corollary \xref{surj_on_omega} with 
$n=2$, we have $\omega_D\simeq\OOO_D(-K_X)$ with 
\[
\left.\frac{y_jdy_3\wedge dy_4}{y_1+(\cdots)}\right|_{D^{\sharp}}
\stackrel{\mt{Res}}{\longleftrightarrow}
\frac{y_j}{\Omega},\qquad j=3,\ 4.
\]
In view of
\[
g^*\omega_Z \subset 
\wedge^2\OOO_D
\bigl(dy_1^4, \ dy_3^4,\ dy_4^4, 
\ dy_1 y_3, \ dy_1 y_4, 
\ dy_3^3 y_4, \ dy_3^2 y_4^2, \ d y_3 y_4^3)
\]
\noindent
one easily sees
that 
$g^*\omega_Z\subset \OOO_{D^{\sharp}}(y_1,y_3,y_4)^2/\Omega$, 
and $y_3/\Omega$ and $y_4/\Omega$ are independent 
$\mod g^*\omega_Z$.

We study $E_{X}=\{s=0\} \in 
|-K_{X}|$. We see that $s$ induces a section $\ov{s}$ of 
\[
\gr^{1}_{C}(\omega {^*}) \simeq (\gr^{0}_{C}\omega)^{\totimes(-1)}
\totimes\gr^{1}_{C}\OOO \simeq (0) \toplus (1)
\]
such that $\ov{s}(P)$ is general in 
$\gr^{1}_{C}(\omega {^*})\otimes {\CC}(P)$. 
Thus $\ov{s}$ is nowhere 
vanishing, whence $E_{X}\supset C$ 
and $E_{X}$ is smooth on $C\setminus \{P\}$. Eliminating 
$y_{4}$, we see $(E_{X},P) \simeq (y_{1},y_{2},y_{3}; \ov{\phi})/\muu_{4}(3,2,1)$ with $C$ the locus of $ 
(t^{3},t^{2},0)$, where 
\[
\ov{\phi}=(y^{2}_{1} - y^{3}_{2})+y_{3}(cy_{3}+\cdots) \in {\CC}\{y_{1},y_{2},y_{3}\}
\]
for some $c \in {\CC}{^*}$ by independence of $\lambda (0)$ and $\mu (0)$. We claim that 
we may take 
\begin{equation}
\label{(2.11.1)}
\ov{\phi}=y^{2}_{1} - y^{3}_{2}+y^{2}_{3} 
\end{equation} 
modulo multiplication by units and $\muu_{m}$-automorphisms fixing $C$. First 
by Weierstrass preparation Theorem, we may assume $\ov{\phi}=y^{2}_{1}+
\alpha(y_{2},y_{3})y_{1}+\beta(y_{2},y_{3})$ with $\wt \alpha \equiv 3$ and $\wt \beta \equiv 2 \mod (4)$. 
Since $\ov{\phi}(t^{3},t^{2},0)=0$, we see $\alpha \equiv 0$ and 
$\beta \equiv y^{3}_{2} \mod (y_{3})$. Hence 
we may assume $\alpha=0$, after replacing $y_{1}$ 
by $y_{1}-\alpha/2$. Since 
$\wt ((\beta-y^{3}_{2})/y_{3}) \equiv 1$ 
and $\wt y_{2} \equiv 2 \mod (4)$, we see $\beta \equiv y^{3}_{2} \mod (y^{2}_{3})$. 
Thus \eqref{(2.11.1)} holds by 
$c \in {\CC}{^*}$. Then it is easy to check (cf. 
\cite[(4.10)]{Reid-YPG1987}) 

\begin{cmp}[see {\cite[(2.11.2)]{Kollar-Mori-1992}}] 
Let 
\[
(E,P)=(y_{1},y_{2},y_{3}; y^{2}_{3} - y^{3}_{2}+y^{2}_{3})/\muu_{4}(3,2,1;2) 
\]
and $C \subset E$ the locus of $(t^{3},t^{2},0)$. Then $(E,P)$ is $D_{5}$ and 
$\Delta (E, C)$ is as in \xref{2.2.2')}.
\end{cmp} 

Thus the proof of Theorem \xref{(2.2)} is completed in Case \type{IIB}. \qed
\end{say}

\section{Case of \type{IA}$+$\type{IA}$+$\type{III}}
\label{(2.12)}

\begin{say}[\textbf{Cf. \cite[(2.12)]{Kollar-Mori-1992}}] 
The configuration of singular points on
$(X,C)$ is the following: 
a \type{IA} point $P$ of odd index $m\ge 3$ 
and a \type{IA} point $Q$ of index $2$ and a \type{III} point $R$ \cite[(9.1)]{Mori-Prokhorov-2008}.
We know that 
$m$ is odd \cite[(9.1)]{Mori-Prokhorov-2008},
$\siz_{P}=1$ \cite[(8.5)]{Mori-Prokhorov-2008}, 
$i_{P}(1)=i_{Q}(1)=i_{R}(1)=1$ \cite[(9.2.1)]{Mori-Prokhorov-2008},
and hence
$\gr^{1}_{C}\OOO \simeq \OOO(-1)\oplus \OOO(-1)$
from the formula
\cite[(2.3.4)]{Mori-1988}. 
It follows from \cite[(3.1.1), (9.2.1), (2.8)]{Mori-Prokhorov-2008} 
that
\[
w_{P}(0)=1+(K_X\cdot C)-w_Q(0)=(m-1)/2m
\] 
(because $w_Q(0)\in \frac 12\ZZ$).
We start with the set-up. 
\end{say}

\begin{lemma}[{\cite[(2.12.1)]{Kollar-Mori-1992}}]
\label{(2.12.1)} 
We can express 
\[
\begin{array}{ll}
(X,P)&=(y_{1},y_{2},y_{3},y_{4}; \alpha)/\muu_{m}(1,\frac{m+1}{2},-1,0;0)\supset (C,P)=y_{1}\text{\rm -axis}/\muu_{m},
\\[10pt] 
(X,Q)&=(z_{1},z_{2},z_{3},z_{4}; \beta)/\muu_{2}(1,1,1,0;0)\supset (C,Q)=z_{1}\text{\rm -axis}/\muu_{2}, 
\\[10pt] 
(X,R)&=(w_{1},w_{2},w_{3},w_{4}; \gamma)\supset (C,R)=w_{1}\text{\rm -axis}, 
\end{array}
\] 
using equations $\alpha$, $\beta$ and $\gamma $ such that $\alpha \equiv y_{1}y_{3} \mod 
(y_{2},y_{3})^{2}+(y_{4})$, $\beta \equiv z_{1}z_{3} \mod (z_{2},z_{3})^{2}+(z_{4})$ and $\gamma \equiv w_{1}w_{3} \mod 
(w_{2},w_{3},w_{4})^{2}$. 
\end{lemma}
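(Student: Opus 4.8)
The three points $P$, $Q$, $R$ are unrelated, so I would dispose of them one at a time. For each, the starting point is the normal form of the relevant local type --- \type{IA} of index $m$, \type{IA} of index $2$, \type{III} --- for the germ $(P\in C\subset X)$, which is supplied by the classification of terminal singularities \cite{Reid-YPG1987} and by \cite[App.~A]{Mori-1988} (equivalently by \cite{Mori-Prokhorov-2008}); the actual work is to feed into these normal forms the numerical data already established in the paragraph above --- $i_P(1)=i_Q(1)=i_R(1)=1$, $\siz_P=1$, $w_P(0)=(m-1)/2m$, and $\gr^1_C\OOO\simeq\OOO(-1)\oplus\OOO(-1)$ --- in order to pin down the $\muu$-weights and the leading parts of $\alpha$, $\beta$, $\gamma$.

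For $P$: the cited classification gives $\muu_m$-semiinvariant coordinates $(y_1,\dots,y_4)$ on the canonical cover in which $\alpha$ is semi-invariant of weight $\equiv 0 \bmod m$, $(C^\sharp,P^\sharp)$ is a coordinate axis, and $\alpha$ is in the normal form of a type-\type{IA} equation up to the ambiguity that the remaining data will remove. Normalising the action we take $\wt y_1=1$ for the axis coordinate, so $t\mapsto t^m$ parametrises $C$ at $P$; then $\wt y_3=-1$ (so that the weight-$0$ monomial $y_1y_3$ will occur as the leading part of $\alpha$) and $\wt y_4=0$. The last weight $\wt y_2$ is forced by $w_P(0)=(m-1)/2m$: reading this value through the dictionary between $\mathrm{QL}(C)$ and local $\muu_m$-weights of \cite[(2.7)]{Kollar-Mori-1992}, and combining it with $\gr^1_C\OOO\simeq\OOO(-1)\oplus\OOO(-1)$, which fixes the splitting type of $\gr^1_C\OOO$ near $P$, gives $\wt y_2\equiv (m+1)/2 \bmod m$. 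Then one cleans up $\alpha$: the inclusion $C^\sharp\subset X^\sharp$ kills all pure $y_1$-terms; every remaining weight-$0$ monomial outside $y_1y_3+(y_2,y_3)^2+(y_4)$ is eliminated by an admissible (semi-invariant, $C$-preserving) change of $y_3$; $i_P(1)=1$ forces the coefficient of $y_1y_3$ to be a unit, since a higher $y_1$-order there would raise $i_P(1)$; and $\siz_P=1$ says precisely that $\alpha$ does not degenerate any further in the normal directions. Absorbing the unit into $y_3$ then gives $\alpha\equiv y_1y_3 \bmod (y_2,y_3)^2+(y_4)$.

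The point $Q$ is the index-$2$ shadow of this, but rigid: the only admissible action is $\muu_2(1,1,1,0)$, so after normalisation $(C,Q)=z_1$-axis with $\wt z_1=\wt z_3=1$, $\wt z_4=0$, and $\wt\beta\equiv 0 \bmod 2$; then $\beta(z_1,0,0,0)=0$, $i_Q(1)=1$ produces the leading term $z_1z_3$ with unit coefficient, and the same cleanup gives $\beta\equiv z_1z_3 \bmod (z_2,z_3)^2+(z_4)$. For $R$: $(X,R)$ has index one, hence is a hypersurface $\{\gamma=0\}\subset\CC^4$, and by the type-\type{III} description it is a compound du Val point through which $C$ runs along a smooth branch on which the tangent cone of $\gamma$ has a factor. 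Taking $(C,R)=w_1$-axis we have $\gamma(w_1,0,0,0)=0$, and $i_R(1)=1$ forces the $(w_2,w_3,w_4)$-linear part of $\gamma$ to be, after renumbering $w_2,w_3,w_4$, a unit multiple of $w_1w_3$; absorbing it gives $\gamma\equiv w_1w_3 \bmod (w_2,w_3,w_4)^2$.

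The one genuinely delicate step is the determination of $\wt y_2=(m+1)/2$ at $P$: one has to check that $w_P(0)=(m-1)/2m$, together with $\gr^1_C\OOO\simeq\OOO(-1)\oplus\OOO(-1)$, leaves exactly one admissible $\muu_m$-weight for $y_2$, and that $\siz_P=1$ then excludes every competing leading monomial of $\alpha$; once the local types and the equalities $i_\bullet(1)=1$ are known, the corresponding statements for $Q$ and $R$ are essentially forced. Throughout, the argument runs parallel to \cite[(2.12.1)]{Kollar-Mori-1992}, with the classification results of \cite{Mori-Prokhorov-2008} replacing the extremal-neighbourhood input used there.
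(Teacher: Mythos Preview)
Your treatment of $Q$ and $R$ agrees with the paper's: both use $i_Q(1)=i_R(1)=1$ together with \cite[(2.16)]{Mori-1988} to force the leading terms $z_1z_3$ and $w_1w_3$.

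At $P$, though, your argument has the logical order reversed and misassigns the role of $\siz_P=1$. You begin by assuming $C^\sharp$ is the $y_1$-axis with $\wt y_1=1$, but in the standard \type{IA} normal form of \cite[App.~A]{Mori-1988} one only has $C^\sharp$ parametrised by $(t^{a_1},t^{a_2},0,0)$ with weights $(a_1,a_2,-a_1,0)$ and $a_1,a_2$ a priori unknown. The paper then reads off the two unknowns in order: $w_P(0)=(m-1)/2m$ gives $a_2=(m+1)/2$ directly from \cite[(4.9)(i)]{Mori-1988} --- no appeal to $\gr^1_C\OOO\simeq\OOO(-1)^{\oplus2}$ is needed or made --- and $\siz_P=1=U(a_1a_2)$ forces $a_1a_2\le m$, hence $a_1=1$. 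Only \emph{after} $a_1=1$ is established does the substitution $y_2\mapsto y_2-y_1^{(m+1)/2}$ straighten $C^\sharp$ to the $y_1$-axis. Your sketch invokes $\siz_P=1$ only as a vague nondegeneracy condition on $\alpha$, missing that it is exactly what pins down $a_1=1$ and hence makes the $y_1$-axis description possible.

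A second, smaller discrepancy: the paper obtains the congruence $\alpha\equiv y_1y_3$ from the fact that $P$ is a $cA$ point \cite[(B.1)(g)]{Mori-1988}, not from $i_P(1)=1$. Your use of $i_P(1)=1$ there is not wrong in spirit, but it is not the input the paper uses at $P$; the $i_\bullet(1)=1$ argument via \cite[(2.16)]{Mori-1988} is what the paper reserves for $Q$ and $R$.
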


\begin{proof}
Express $(X,P)=(y_{1},y_{2},y_{3},y_{4}; \alpha)/\muu_{m}(a_{1},a_{2},-a_{1},0; 0)$ 
so that $C^\sharp$ is the locus of $(t^{a_{1}},t^{a_{2}},0,0)$, 
where $a_{1}$ and $ a_{2}$ are 
positive integers such that 
$\gcd(a_{1}a_{2},m)=1$. Since $w_{P}(0)=(m-1)/2m$, it 
holds that $a_{2}=(m+1)/2$ \cite[(4.9)(i)]{Mori-1988}. 
By $\siz_{P}=1=U(a_{1}a_{2})$, we 
have $a_{1}a_{2}\le m $ and $ a_{1}=1$. We need only to replace $y_{2}$ by 
$y_{2}-y^{(m+1)/2}_{1}$ to get the assertion for $(X,P)$. 
We can choose $\alpha$ so that
$\alpha \equiv y_{1}y_{3}$ 
because $P$ is a $cA$ point \cite[(B.1)(g)]{Mori-1988}. The rest is 
similar except for $\beta \equiv z_{1}z_{3}$ and 
$\gamma \equiv w_{1}w_{3}$ which follow from 
$i_{Q}(1)=1$ and $i_{R}(1)=1$ and \cite[(2.16)]{Mori-1988}. 
\end{proof}

We will improve the set-up in two steps.

\begin{lemma}[{\cite[(2.12.2)]{Kollar-Mori-1992}}]
\label{(2.12.2)}
The point $P$ is ordinary, that is, 
\[
(X,P)=(y_{1},y_{2},y_{3})/\muu_{m}(1,(m+1)/2,-1) 
\supset (C,P)=y_{1}\text{\rm -axis}/\muu_{m}. 
\]
\end{lemma}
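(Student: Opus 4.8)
\textbf{Proof proposal for Lemma \ref{(2.12.2)}.}
The plan is to eliminate the fourth variable $y_4$ from the equation $\alpha$ at $P$, showing that the hypersurface $(X,P)=\{\alpha=0\}/\muu_m$ is in fact a cyclic quotient of a smooth surface germ, i.e.\ that after a coordinate change $\alpha$ depends only on $y_1,y_2,y_3$ (and is of the form $y_1y_3+\cdots$, so smooth) once $y_4$ has been solved away. First I would use Lemma \xref{(2.12.1)}, which gives $\alpha\equiv y_1y_3\bmod(y_2,y_3)^2+(y_4)$ with $\muu_m$-weights $(1,(m+1)/2,-1,0;0)$. Since $\wt y_4\equiv 0$, the equation $\alpha$ may be regarded, via Weierstrass preparation in $y_4$ (after checking that $\alpha$ is $y_4$-regular of order $1$, which is exactly the content of ``$P$ is ordinary'' as opposed to having $\siz_P>1$), as $y_4\cdot u + (\text{terms without }y_4)$ for a unit $u$; solving $y_4$ as a function of $y_1,y_2,y_3$ then exhibits $(X,P)$ as $(y_1,y_2,y_3)/\muu_m(1,(m+1)/2,-1)$.

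The key steps, in order: (1) Record from Lemma \xref{(2.12.1)} that $\alpha=y_1y_3+(\text{higher order in }(y_2,y_3))+y_4(\cdots)$ and that all of $y_1,y_2,y_3,y_4$ carry the stated weights. (2) Show that the coefficient of $y_4^1$ in $\alpha$ is a unit: this is where I would invoke the classification/numerical data, namely $\siz_P=1$ together with $i_P(1)=1$ from \cite[(8.5),(9.2.1)]{Mori-Prokhorov-2008}, which forces $P$ to be an ordinary $cA/m$ point in the terminology of \cite[\S 2, (2.16)]{Mori-1988}; ordinariness is precisely the statement that $y_4$ occurs linearly with invertible coefficient, so that $(X,P)$ is analytically a quotient of a smooth germ. (3) Apply the Weierstrass preparation/division theorem in the $\muu_m$-equivariant setting (legitimate since $\wt y_4\equiv 0$, so the substitution $y_4=h(y_1,y_2,y_3)$ respects the grading) to eliminate $y_4$, obtaining $(X,P)\simeq(y_1,y_2,y_3)/\muu_m(1,(m+1)/2,-1)$ with $(C,P)$ still the $y_1$-axis, since the substitution fixes $y_1,y_2,y_3$ and $C^\sharp=\{y_2=y_3=y_4=0\}$ is unchanged. (4) Note $\gcd((m+1)/2,m)=1$ and $m$ odd, so this is a genuine (terminal) cyclic quotient singularity of type $\frac1m(1,(m+1)/2,-1)$, consistent with $(X,P)$ being a \type{IA} point.

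The main obstacle is step (2): justifying rigorously that the $y_4$-coefficient is a unit, i.e.\ that no local deformation or hidden higher-order cancellation can make $\alpha$ genuinely depend on $y_4$ only through quadratic or higher terms. I expect to dispatch this by combining $\siz_P=1$ (which bounds the complexity of the $cA/m$ equation, forcing $a_1=1$ as already used in Lemma \xref{(2.12.1)}) with the general-position/deformation argument of Remark \xref{rem-def}: any putative obstruction to eliminating $y_4$ would survive in a general deformation $(X_\lambda,C_\lambda)$, contradicting the classification of such germs listed in the Remark after Theorem \xref{(2.2)}, or else contradict $i_P(1)=1$ via the index computation of \cite[(2.2),(2.3.4)]{Mori-1988}. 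Once step (2) is in hand, steps (3)--(4) are the routine equivariant Weierstrass elimination already used implicitly in \cite[(2.12.2)]{Kollar-Mori-1992}.
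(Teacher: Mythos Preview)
Your step (2) is the entire content of the lemma, and the invariants you propose to use do not settle it. The conditions $\siz_P=1$ and $i_P(1)=1$ are compatible with a non-ordinary $cA/m$ point: for instance $\alpha=y_1y_3+y_4^2+\cdots$ (with $\wt y_4\equiv 0$) still has $\siz_P=1$ and $i_P(1)=1$, yet $y_4$ does not occur linearly and cannot be eliminated by Weierstrass preparation. So your argument is circular: you are assuming the $y_4$-coefficient is a unit, which is exactly the assertion ``$P$ is ordinary''. The vague appeal to Remark \xref{rem-def} does not help either, since a non-ordinary $P$ is not excluded by any classification already in hand at this point of the paper.

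The paper's actual proof is of a completely different nature. It argues by contradiction: assume $P$ is not ordinary, so $\alpha\equiv y_1y_3\bmod (y_2,y_3,y_4)^2$. One first L-deforms at $Q$ to reduce to $Q$ ordinary (invoking \cite[(2.12.2)]{Kollar-Mori-1992} if the deformation produces an extremal neighborhood). Then, using the divisor $D=\{y_1+h=0\}/\muu_m\in|-2K_X|$ and the surjectivity statement of Corollary \xref{surj_on_omega}, one produces a section $s\in H^0(F^1_C(\omega^*))$ whose image $\bar s$ in $\gr^1_C(\omega^*)$ is part of an $\ell$-free $\ell$-basis at $P$. This yields the $\ell$-splitting \eqref{(2.12.2.1)} of $\gr^1_C\OOO$, from which one builds a $C$-laminal ideal $J$ of width $2$, computes $\gr^{n,i}(\omega,J)$ for $n\le 3$, and shows $H^1(\omega/F^4(\omega,J))\neq 0$. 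By \cite[(4.4)]{Mori-Prokhorov-2008} this forces $f^{-1}(o)\subset\Spec\OOO_X/F^4(\OOO,J)$, whence $2\le(-K_X\cdot V)=6/(2m)$, a contradiction. None of this machinery is present in your sketch.
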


\begin{proof}
Suppose that $P$ is not ordinary. We will derive a 
contradiction. By our hypothesis, 
we may assume 
$\alpha \equiv y_{1}y_{3} \mod 
(y_{2},y_{3},y_{4})^{2}$. 
Apply L-deformation at $Q$ \cite[(2.9.1)]{Kollar-Mori-1992},
see also Remark \ref{rem-def}.
If a general member of the corresponding family 
is an extremal neighborhood, the assertion follows from
\cite[(2.12.2)]{Kollar-Mori-1992}.
Thus we may assume that $Q$ is 
ordinary and hence $\beta=z_{4}+z_{1}z_{3}$ in our 
$\QQ$-conic bundle case. Hence $\{y_{2},y_{4}\}$ and 
$\{z_{2},z_{3}\}$ are the $\ell$-free $\ell$-bases 
of $\gr^{1}_{C}\OOO$ at $P$ and $Q$, respectively. By 
\cite[(2.12.1)]{Kollar-Mori-1992}, we see 
\[\textstyle
\gr^{0}_{C}\omega \simeq\left(-1+\frac{m-1}{2}P^\sharp+Q^\sharp\right)
\]
and
\[\textstyle 
\gr^{0}_{C}(\omega {^*}) \simeq\left(-1+\frac{m+1}{2}P^\sharp 
+Q^\sharp\right).
\]
Thus 
$H^{0}(\omega {^*})=H^{0}(F^{1}_{C}(\omega {^*}))$. Let $D=\{y_{1}+h(y_2,y_3,y_4)=0\}/\muu_{m}$ with general $h$ such that $\wt\ h=\wt\ y_1$. 
Then $D \in |-2K_{X}|$ by $(D\cdot C)=1/m$. 

\begin{ppar}
\label{ppar-53}
Now apply Corollary \xref{surj_on_omega} with 
$n=2$. We obtain $\omega_D\simeq\OOO_D(-K_X)$
which gives the correspondence of $\ell$-bases
\[
\left.\frac{d y_2\wedge dy_3}{\partial \alpha/\partial y_4}\right|_{D^{\sharp}}
=
\left.\frac{d y_4\wedge dy_2}{\partial \alpha/\partial y_3}\right|_{D^{\sharp}}
=
\left.\frac{d y_3\wedge dy_4}{\partial \alpha/\partial y_2}\right|_{D^{\sharp}}
\stackrel{\mt{Res}}{\longleftrightarrow}
\frac{(\mt{unit})}{\Omega},
\]
and 
\[
g^*\omega_Z \subset \wedge^2
\OOO_D\bigl(dy_2^m,\ dy_3^m,\ dy_4,\ 
dy_2y_3^{(m+1)/2},\ dy_2^2y_3\bigr).
\]
Hence,
\[
\begin{array}{lc}
g^*\omega_Z \subset& 
\sum_{i,j=2,3,4} \OOO_{D^\sharp}(y_2,y_3)^2 dy_i\wedge dy_j
\\\
 &{\updownarrow}\lefteqn{\scriptstyle\mt{Res}}{}
\\
&\sum_{k=2,3,4}
\OOO_{D^\sharp}(y_2,y_3)^2
(\partial \alpha/\partial y_k)\Omega^{-1}.
\end{array}
\]
So we have the lifting modulo $\OOO_{D^{\sharp}}(y_2,y_3)^2(\partial\alpha/\partial y_2,\partial\alpha/\partial y_3,\partial\alpha/\partial y_4)$.
Therefore
there exists $s \in H^{0}(F^{1}_{C}(\omega {^*}))$ 
inducing $(y_{2}+(y_{1}+h)\OOO_{X})/\Omega \in \OOO_{D}(-K_{X})$, 
where
\[
\Omega=\left.\frac{dy_1\wedge dy_2 \wedge dy_3}
{\partial \alpha/\partial y_4}\right|_{D^{\sharp}}.
\]
\end{ppar}

\begin{ppar}
Thus $s$ induces a global section 
$\ov{s}$ of $\gr^{1}_{C}(\omega {^*}) \simeq \gr^{1}_{C}\OOO \totimes \gr^{0}_{C}(\omega {^*})$ which is a part of $\ell$-free $\ell$-basis 
at $P$. Hence there is an exact sequence 
\[
0 \to \gr^{0}_{C}\omega \to \gr^{1}_{C}\OOO \to \gr^{1}_{C}\OOO/\gr^{0}_{C}\omega \to 0. 
\]
It is split because $\gr^{0}_{C}\omega \simeq \OOO(-1)$ and $\gr^{1}_{C}\OOO \simeq \OOO(-1)\oplus \OOO(-1)$. Then it 
is $\ell$-split at $Q$ because $\ell$-bases of 
$\gr^{0}_{C}\omega$ and $\gr^{1}_{C}\OOO$ have 
all weights $\equiv 1 \mod (2)$. Hence
$\gr^{1}_{C}\OOO/\gr^{0}_{C}\omega$ is an 
$\ell$-invertible sheaf such that 
\[
\ql_{C}(\gr^{1}_{C}\OOO/\gr^{0}_{C}\omega)=\ql_{C}(\gr^{1}_{C}\OOO) - \ql_{C}(\gr^{0}_{C}\omega)=-1+Q^\sharp. 
\]
Applying \cite[(2.6)]{Kollar-Mori-1992} to 
$\ql_{C}(\gr^{0}_{C}\omega) - 
\ql_{C}(\gr^{1}_{C}\OOO/\gr^{0}_{C}\omega)
=\frac{m-1}{2}P^\sharp$, we obtain
an $\ell$-splitting 
\begin{equation}
\label{(2.12.2.1)}
\textstyle
\gr^{1}_{C}\OOO \simeq 
\left(-1+\frac{m-1}{2}P^\sharp+Q^\sharp\right) 
\toplus (-1+Q^\sharp). 
\end{equation}
We may further assume that 
$y_{2}$, $z_{2}$ and $w_{2}$ (resp. $y_{4}$, $z_{3}$ 
and $w_{4}$) are the $\ell$-free $\ell$-bases of 
$\left(-1+\frac{m-1}{2}P^\sharp+Q^\sharp\right)$ 
(resp. $(-1+Q^\sharp)$) 
at $P$, $Q$ and $R$, by making coordinates changes to the ones in 
\cite[(2.12.1)]{Kollar-Mori-1992}. 
\end{ppar}

Let $J$ be the $C$-laminal ideal of width 2 such that 
$J/F^{2}_{C}\OOO=\left(-1+\frac{m-1}{2}P^\sharp+Q^\sharp\right)$. 
Then $\{y_{2},y_{3},y^{2}_{4}\}$ form an $\ell$-basis of $J$ at $P$. 
By replacing $y_{3}$ by an element of the form 
$y_{3}+y^{2}_{4}(\cdots)$ if 
necessary, we may assume $\alpha \equiv y_{1}y_{3}+cy^{2}_{4} \mod J^\sharp I_{C^\sharp}$ for some 
$c \in {\CC}$. If $c \neq0$ then $I\supset J$ 
is $(1,2,2)$-monomializable at $P$ 
(see \cite[(8.9-8.10)]{Mori-1988}). 
If $c=0$ we may still assume that $I\supset J$ is $(1,2,2)$-monomializable at $P$ by 
deformation arguments \cite[(2.9.2)]{Kollar-Mori-1992}
(see also Remark \ref{rem-def})
because Lemma \ref{(2.12.2)} would follow from 
\cite[(2.12.2)]{Kollar-Mori-1992} 
if our $(X,C)$ deformed to 
an extremal neighborhood. 
In the same way, we may 
assume that $I\supset J$ is $(1,2,2)$-monomializable at $R$. At the 
ordinary point $Q$, $I\supset J$ is $(1,2)$-monomializable. 
Thus there are $\ell$-isomorphisms 
\[
\begin{array}{cl}
\gr^{1}(\OOO,J) &\simeq (-1+Q^\sharp), 
\\[10pt]
\gr^{2,0}(\OOO,J) &\simeq 
\left(-1+\frac{m-1}{2}P^\sharp+Q^\sharp\right), \\[10pt] 
\gr^{2,1}(\OOO,J) &\simeq \gr^{1}(\OOO,J)^{\totimes2}\totimes(1+P^\sharp) \simeq (P^\sharp), 
\\[10pt] 
\gr^{3,0}(\OOO,J) &\simeq \gr^{2,0}(\OOO,J)\totimes\gr^{1}(\OOO,J) \simeq\left(-1+\frac{m-1}{2}P^\sharp\right), 
\\[10pt] 
\gr^{3,1}(\OOO,J) &\simeq \gr^{2,1}(\OOO,J)\totimes\gr^{1}(\OOO,J) \simeq (-1+P^\sharp+Q^\sharp) 
\end{array}
\]
(cf. \cite[(8.6), (8.12)]{Mori-1988}) and the following:
\[
\begin{array}{cl}
\gr^{1}(\omega,J) &\simeq \gr^{1}(\OOO,J) \totimes gr^0_C \omega \simeq (-1+\frac{m-1}{2}P^{\sharp}), 
\\[10pt]
\gr^{2,0}(\omega,J) &\simeq 
\gr^{2,0}(\OOO,J) \totimes gr^0_C \omega \simeq \left(-1+(m-1)P^\sharp\right), \\[10pt] 
\gr^{2,1}(\omega,J) &\simeq \gr^{2,1}(\OOO,J) \totimes gr^0_C \omega \simeq 
\left(-1+\frac{m+1}{2}P^\sharp+Q^\sharp\right), 
\\[10pt] 
\gr^{3,0}(\omega,J) &\simeq \gr^{3,0}(\OOO,J)\totimes\gr^0_C\omega \simeq\left(-2+(m-1)P^\sharp+Q^\sharp\right), 
\\[10pt] 
\gr^{3,1}(\omega,J) &\simeq \gr^{3,1}(\OOO,J)\totimes\gr^0_C\omega \simeq \left(-1+\frac{m+1}{2}P^\sharp\right) ,
\end{array}
\]
by $gr^0_C \omega \simeq (-1+\frac{m-1}{2}P^{\sharp}+Q^{\sharp})$. 

Hence there are an 
$\ell$-isomorphism and $\ell$-exact sequences 
\[
\textstyle
\gr^{1}(\omega,J) \simeq\left(-1+\frac{m-1}{2}P^\sharp\right), 
\]
\[
\textstyle
0 \to\left(-1+\frac{m+1}{2}P^\sharp+Q^\sharp\right) \to \gr^{2}(\omega,J) \to (-1+(m-1)P^\sharp) \to 0, 
\]
\[
\textstyle
0 \to\left(-1+\frac{m+1}{2}P^\sharp\right) 
\to \gr^{3}(\omega,J) \to (-2+(m-1)P^\sharp+Q^\sharp) \to 0,
\]
by \cite[(8.6)]{Mori-1988}.
Now from the exact sequences 
\begin{equation}
\label{eq-omegaJF}
0\to \gr^{n}(\omega,J) \to \omega_X/F^{n+1}(\omega,J)
\to \omega_X/F^{n}(\omega,J)\to 0
\end{equation}
we obtain $H^{1}(\omega /F^{4}(\omega,J)) \neq 0$ which is a contradiction. Then it follows from 
\cite[(4.4)]{Mori-Prokhorov-2008} that
$V:=\Spec_X \OOO_X/F^4(\OOO,J)\supset f^{-1}(o)$.
Hence, $2=(-K_X\cdot f^{-1}(o))\le (-K_X\cdot V)$.
On the other hand, 
near a general point $S\in C$,
for a suitable choice of coordinates $(x,y,z)$ in $(X,S)$,
we may assume that 
$F^1(\OOO,J)=I_C=(x,y)$,
$F^2(\OOO,J)=J=(x,y^2)$,
$F^3(\OOO,J)=I_CJ+J=(x^2,xy,y^3)$,
$F^4(\OOO,J)=J^2=(x^2,xy^2,y^4)$.
Hence, 
\[\textstyle
2\le (-K_X\cdot V)
=\frac{1}{2m}\mt{length}_S\CC\{x,y\}/(x^2,xy^2,y^4)
=\frac{6}{2m},
\]
which is a contradiction. Lemma \xref{(2.12.2)} is 
proved. 
\end{proof}

\par
\begin{lemma}[{\cite[(2.12.3)]{Kollar-Mori-1992}}]
\label{(2.12.3)} 
The point $Q$ is ordinary, that is, 
\[
(X,Q)=(z_{1},z_{2},z_{3})/\muu_{2}(1,1,1)\supset (C,P)=z_{1}\text{\rm -axis}/\muu_{2}. 
\]
\end{lemma}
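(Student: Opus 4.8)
The plan is to argue by contradiction in the spirit of Lemma~\ref{(2.12.2)}: assume $Q$ is \emph{not} ordinary and derive a contradiction, now using that $P$ is already ordinary by Lemma~\ref{(2.12.2)}, so that, with the set-up of Lemma~\ref{(2.12.1)}, $(X,P)=(y_1,y_2,y_3)/\muu_m(1,(m+1)/2,-1)$ and $\gr^1_C\OOO\simeq\OOO(-1)\oplus\OOO(-1)$. As in the proof of Lemma~\ref{(2.12.2)}, whenever a monomialization or genericity hypothesis is needed I would apply an $L$-deformation at $R$, or at $Q$ (Remark~\ref{rem-def}, \cite[(2.9.1), (2.9.2)]{Kollar-Mori-1992}); if a general member of such a family is an extremal neighborhood the statement follows from \cite[(2.12.3)]{Kollar-Mori-1992}, so at each step I may assume the deformed germ is again a $\QQ$-conic bundle germ in which the required property holds.

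First I would fix the $\ell$-structures by the method of Lemma~\ref{(2.12.2)}: from $i_P(1)=i_Q(1)=i_R(1)=1$, $w_P(0)=(m-1)/2m$ and the invariants at $Q$ one gets $\gr^0_C\omega\simeq(-1+\tfrac{m-1}{2}P^\sharp+Q^\sharp)$, hence $(-K_X\cdot C)=1/2m$ and $\gr^0_C(\omega^*)\simeq(-1+\tfrac{m+1}{2}P^\sharp+Q^\sharp)$, of degree $-1$; in particular $D=\{y_1=0\}/\muu_m\in|-2K_X|$ and a general $s\in H^0(\OOO_X(-K_X))$ lies in $H^0(F^1_C(\omega^*))$. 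Then I would apply Corollary~\ref{surj_on_omega} with $n=2$ --- the substitute here for the vanishing of $H^1(X,\omega_X)$ used in \cite[(2.5)]{Kollar-Mori-1992} --- to force the image of $s$ in $\omega_D\simeq\OOO_D(-K_X)$ to be a prescribed general element, so that $s$ induces a section $\bar s$ of $\gr^1_C(\omega^*)\simeq\gr^1_C\OOO\totimes\gr^0_C(\omega^*)$ that is part of an $\ell$-free $\ell$-basis at $Q$. Exactly as for \eqref{(2.12.2.1)} this gives an $\ell$-splitting of $\gr^1_C\OOO$ adapted to $\bar s$ at $Q$, and thence a $C$-laminal ideal $J$ of width $2$ (as in the proof of Lemma~\ref{(2.12.2)}) with prescribed $J/F^2_C\OOO$.

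Next I would compute the graded sheaves $\gr^{n,i}(\OOO,J)$ and $\gr^{n,i}(\omega,J)$ by monomializing $I_C\supset J$ at $P$ (immediate, $P$ being ordinary), at $R$, and at $Q$ --- invoking the deformation reductions at $R$ and, if necessary, at $Q$ --- and applying \cite[(8.6), (8.12)]{Mori-1988}, just as in Lemma~\ref{(2.12.2)}. The crux is that the assumption that $Q$ is \emph{not} ordinary --- i.e.\ has axial multiplicity $>1$, or is of type $cAx/2$ or $cD/2$ --- alters the monomialization at $Q$ so as to depress one graded piece: some quotient of $\gr^{n}(\omega,J)$ acquires a summand whose integer part is $\le-2$, which via the exact sequences \eqref{eq-omegaJF} forces $H^1(\omega/F^{n}(\omega,J))\ne0$. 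By \cite[Theorem (4.4)]{Mori-Prokhorov-2008} this gives $V:=\Spec_X\OOO_X/F^{n}(\OOO,J)\supset f^{-1}(o)$, so $2=(-K_X\cdot f^{-1}(o))\le(-K_X\cdot V)$; on the other hand, computing $(-K_X\cdot V)=\tfrac{1}{2m}\mt{length}_S\OOO_{V,S}$ at a general $S\in C$ from the monomial form of $F^\bullet(\OOO,J)$, as in Lemma~\ref{(2.12.2)}, gives $c/2m$ with $c$ small, so $2\le c/2m<2$, a contradiction. Hence $Q$ is ordinary.

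The step I expect to be the main obstacle is the local analysis at $Q$: choosing $J$ so that its monomialization genuinely detects the failure of ordinariness (so that the relevant graded piece of $(\omega,J)$ really drops to integer part $\le-2$ precisely in the non-ordinary cases), and verifying that the only alternative to this picture is a deformation to an extremal neighborhood, already covered by \cite[(2.12.3)]{Kollar-Mori-1992}. A secondary difficulty is that, without the classical vanishing $H^1(X,\omega_X)=0$, both the construction of $s$ and the conclusion ``$H^1\ne0$'' have to be routed through Corollary~\ref{surj_on_omega} and \cite[Theorem (4.4)]{Mori-Prokhorov-2008} rather than through the argument of \cite[(2.5)]{Kollar-Mori-1992}.
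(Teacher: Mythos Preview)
Your overall strategy---assume $Q$ not ordinary, produce $H^1(\omega/F^{\bullet})\neq 0$, and contradict via \cite[(4.4)]{Mori-Prokhorov-2008} and an intersection bound---is the paper's strategy. But the paper's execution is far shorter: it needs no laminal ideal $J$ and no filtration beyond $F^2_C$. The point is that non-ordinariness of $Q$ is already visible in $\gr^1_C\OOO$: it forces $\beta\equiv z_1z_3\bmod(z_2,z_3,z_4)^2$, so the $\ell$-basis of $\gr^1_C\OOO$ at $Q$ becomes $\{z_2,z_4\}$ with weights $1,0$ (instead of $1,1$). The paper then reuses the split sequence $0\to\gr^0_C\omega\to\gr^1_C\OOO\to\cdot\to 0$ from Lemma~\ref{(2.12.2)}, built from $\bar s$ at $P$; it is $\ell$-split at $P$ as before, and at $Q$ the $\ell$-splitting is automatic because $\gr^0_C\omega$ has weight $1$ while $\gr^1_C\OOO$ now has weights $0,1$. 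The quotient therefore loses its $Q^\sharp$, giving $\gr^1_C\OOO/\gr^0_C\omega\simeq(-1+P^\sharp)$, and after tensoring with $\gr^0_C\omega$ the quotient of $\gr^1_C\omega$ is $(-2+\tfrac{m+1}{2}P^\sharp+Q^\sharp)$ of degree $-2$. Thus $H^1(\omega/F^2_C\omega)\neq 0$ and one gets $2\le 3/(2m)$ directly.

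Two aspects of your plan are worth flagging against this. First, you want $\bar s$ to be part of an $\ell$-free $\ell$-basis \emph{at $Q$}, but Corollary~\ref{surj_on_omega} with $D=\{y_1=0\}/\muu_m$ only controls the section near $P$; the paper avoids this by keeping the construction at $P$ and reading off the $\ell$-splitting at $Q$ from the weight mismatch alone. Second, the machinery of $J$ and $\gr^{n,i}(\omega,J)$ is unnecessary here: the degree drop to $-2$ happens already in $\gr^1_C\omega$, and it happens precisely because non-ordinariness of $Q$ removes the $Q^\sharp$ from the quotient of $\gr^1_C\OOO$. Your longer route could likely be made to work, but the paper's shortcut both isolates the mechanism and minimizes the computation.
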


\begin{proof}
Assuming that $Q$ is not ordinary whence $\beta \equiv z_{1}z_{3} 
\mod (z_{2},z_{3},z_{4})^{2}$, we will derive a 
contradiction. As in the proof of 
Lemma \xref{(2.12.2)}, there is a split exact 
sequence 
\[
0 \to \gr^{0}_{C}\omega \to \gr^{1}_{C}\OOO \to (\gr^{1}_{C}\OOO/\gr^{0}_{C}\omega) \to 0
\]
which is $\ell$-split at $P$. 
Since $\ell$-free $\ell$-bases of $\gr^{0}_{C}\omega$ 
(resp. $\gr^{1}_{C}\OOO$) at $Q$ have weights $1$
(resp. $0$, $1$) $\mod (2)$, 
the above sequence is also $\ell$-split at $Q$. 
Thus there are $\ell$-exact sequences 
\[
\textstyle
0 \to\left(-1+\frac{m-1}{2}P^\sharp+Q^\sharp\right) 
\to \gr^{1}_{C}\OOO \to (-1+P^\sharp) \to 0, 
\]
\[
\textstyle
0 \to (-1+(m-1)P^\sharp) \to \gr^{1}_{C}\omega \to 
\left(-2+\frac{m+1}{2}P^\sharp+Q^\sharp\right) \to 0.
\]
Similarly
to the argument at the end of Lemma \xref{(2.12.2)}
$H^{1}(\omega /F^{2}_{C}\omega) \neq0$
and one
has a contradiction by $2\le 3/(2m)$.
Lemma \xref{(2.12.3)} is proved. 
\end{proof}

\begin{say}[{\cite[(2.12.4)]{Kollar-Mori-1992}}] 
\label{(2.12.4)}
As in the argument for Lemma \xref{(2.12.2)}, there is 
an $\ell$-isomorphism 
\[
\textstyle
\gr^{1}_{C}\OOO \simeq 
\left(-1+\frac{m-1}{2}P^\sharp+Q^\sharp\right) 
\toplus 
\left(-1+P^\sharp+Q^\sharp\right). 
\]
Let $J$ be the $C$-laminal ideal such that 
$J/F^{2}_{C}\OOO=
\left(-1+\frac{m-1}{2}P^\sharp+Q^\sharp\right)$. 
After an (equivariant) change of coordinates if necessary, we may 
assume that $(y_{2},z_{2},w_{2})$ (resp. $(y_{3},z_{3},w_{4})$) are $\ell$-free $\ell$-bases of 
$\left(-1+\frac{m-1}{2}P^\sharp+Q^\sharp\right)$ 
(resp. $(-1+P^\sharp+Q^\sharp)$), whence $J=(w_{2},w_{3},w^{2}_{4})$ 
at $R$. Replacing $w_{3}$ by an element $\equiv w_{3} \mod (w_{2},w_{4})^{2}$ if 
necessary, we may further assume 
\[
\gamma \equiv w_{1}w_{3}+c_{1}w^{2}_{4}+c_{2}w_{4}w_{2}+c_{3}w^{2}_{2} \mod (w_{3},w^{2}_{2},w_{2}w_{4},w^{2}_{4})\cdot I_{C}
\]
for some $c_{1}, c_{2}, c_{3} \in {\CC}$. We note $\gamma \equiv w_{1}w_{3}+c_{1}w^{2}_{4} \mod J\cdot I_{C}$. 
\end{say}

\begin{lemma}[{\cite[2.12.5]{Kollar-Mori-1992}}]
\label{(2.12.5)} 
A general member $E_{X}$ of $|-K_{X}|$ has 
singularities $A_{m-1}, A_{1}$ and $A_{n}$ at $P$, $Q$ and $R$, respectively and 
is smooth elsewhere, and $\Delta (E_{X}, C)$ is 
\[
\begin{array}{ll}
& \circ \\ 
& \mid \\ 
\underbrace{\circ -\cdots - \circ}_{m-1} -&\bullet -\underbrace{\circ -\cdots - \circ}_{n}, 
\end{array}
\]
where $n$ is some integer $\ge 1$. We have 
$n=1$ if $c_{1}\neq0$ when $m\ge 5$ or if $(c_{1},c_{2},c_{3}) \neq (0,0,0)$ when $m=3$. 
\end{lemma}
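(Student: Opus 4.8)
# Proof Proposal for Lemma \ref{(2.12.5)}

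The plan is to proceed exactly as in the earlier cases (\type{IC}, \type{IIB}), producing the general anti-canonical section $s$ explicitly near each of the three singular points $P$, $Q$, $R$ and reading off the singularity type of $E_X = \{s = 0\}$ from the local defining equations, then computing the dual graph $\Delta(E_X, C)$ by hand near each point. The key input is that $H^0(\OOO_X(-K_X)) = H^0(F^1_C(\omega^*))$ (as established in the set-up leading to \ref{(2.12.4)}, since $\gr^0_C(\omega^*) \simeq \OOO_C(-1)$), so sections vanish along $C$ and $E_X \supset C$.

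First I would apply Corollary \ref{surj_on_omega} with $n = 2$ to the divisor $D = \{y_1 + h = 0\}/\muu_m \in |-2K_X|$ exactly as in \ref{ppar-53}, obtaining that a general $s \in H^0(F^1_C(\omega^*))$ induces a section $\ov{s}$ of $\gr^1_C(\omega^*) \simeq \gr^1_C\OOO \totimes \gr^0_C(\omega^*)$ whose value $\ov{s}(P)$, $\ov{s}(Q)$, $\ov{s}(R)$ can be prescribed freely (within the constraints of the $\ell$-structure). By the $\ell$-splitting of $\gr^1_C\OOO$ from \ref{(2.12.4)} together with the choice of $\ell$-bases $(y_2, z_2, w_2)$ and $(y_3, z_3, w_4)$, this lets me write $s$ near each point: near $P$ as $(y_2 + \text{(unit)}\cdot y_3 + \cdots)/\Omega$ giving, after eliminating one variable, a hypersurface $(E_X, P) = (y_1, y_2, y_3)/\muu_m$-quotient whose cyclic cover equation is $\alpha$ with $y_2$ (say) eliminated — since $\alpha \equiv y_1 y_3 + c y_4^2$ on the appropriate graded piece and $P$ is ordinary by Lemma \ref{(2.12.2)}, this is an $A_{m-1}$ singularity; near $Q$ similarly $\beta = z_4 + z_1 z_3$ with $Q$ ordinary (Lemma \ref{(2.12.3)}), giving an $A_1$ point; near $R$ the equation is $\gamma \equiv w_1 w_3 + c_1 w_4^2 + c_2 w_2 w_4 + c_3 w_2^2 \mod (\cdots)I_C$, and after eliminating $w_4$ via the section one gets a $cA$-type hypersurface singularity which is $A_n$ for some $n \ge 1$, with $n = 1$ precisely when the quadratic part is nondegenerate, i.e. when $c_1 \ne 0$ (for $m \ge 5$, where the $\ell$-weights force $c_2 = c_3 = 0$) or $(c_1, c_2, c_3) \ne (0,0,0)$ (for $m = 3$).

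Then I would verify $E_X$ is smooth on $C \setminus \{P, Q, R\}$: since $\ov{s}$ is a section of the locally free sheaf $\gr^1_C(\omega^*)$, smoothness away from the three points amounts to $\ov{s}$ being nowhere zero on $C \setminus \{P,Q,R\}$, which follows from the splitting type of $\gr^1_C(\omega^*)$ (a sum of $\OOO_C$'s and $\OOO_C(\text{small degree})$) together with the genericity of $\ov{s}$, just as in the \type{IC} and \type{IIB} arguments. Finally, the dual graph: at $P$ the $A_{m-1}$ point with $C$ passing through it as the $y_1$-axis image contributes a chain $\underbrace{\circ - \cdots - \circ}_{m-1}$ attached to $\bullet$ from one side (a Computation analogous to \cite[(2.10.5)]{Kollar-Mori-1992}, adapted to the $\muu_m(1, (m+1)/2, -1)$ quotient — note the parametrization of $C$ here differs from the \type{IC} case, so the chain hangs off $\bullet$ linearly rather than forking); at $R$ the $A_n$ point with $C$ a smooth branch gives the tail $\underbrace{\circ - \cdots - \circ}_{n}$; and at $Q$ the $A_1$ point through which $C$ passes contributes the single fork vertex $\circ$ drawn above $\bullet$. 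Assembling these gives the stated graph.

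The main obstacle I expect is the local analysis at $R$: unlike $P$ and $Q$, the point $R$ carries no quotient structure, so I must carefully track how the general section $\ov{s}$ interacts with the three-parameter family $(c_1, c_2, c_3)$, determine after elimination of $w_4$ exactly which monomials survive in the resulting surface equation, and identify the resulting $cA$ singularity's $A_n$-index in terms of the order of vanishing along $C$ — and separately confirm, via the $\ell$-weight bookkeeping in \ref{(2.12.4)} (weights mod $m$ of $w_2, w_4$ relative to $I_{C}$), that $c_2 = c_3 = 0$ is forced when $m \ge 5$ so that only $c_1$ governs the $n = 1$ criterion there. The deformation arguments of Remark \ref{rem-def} (L-deformation at $R$, reducing to the extremal-neighborhood case \cite[(2.12.5)]{Kollar-Mori-1992} when $(X,C)$ degenerates) may again be needed to handle the borderline configurations, exactly as they were invoked for the monomializability claims in Lemma \ref{(2.12.2)}.
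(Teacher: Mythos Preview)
Your overall strategy---produce $\ov{s} \in H^0(\gr^1_C(\omega^*))$, show it is nowhere vanishing, and read off the local equations of $E_X$ at $P$, $Q$, $R$---matches the paper's. But the mechanism you propose for distinguishing $m \ge 5$ from $m = 3$ is wrong, and this is where your argument would break. You claim that for $m \ge 5$ ``the $\ell$-weights force $c_2 = c_3 = 0$''. The constants $c_1, c_2, c_3$ live in the equation $\gamma$ at the \emph{index-one} point $R$, where there is no nontrivial $\ell$-structure to impose any such vanishing; nothing forces $c_2 = c_3 = 0$, and in general they are nonzero. The actual reason only $c_1$ enters when $m \ge 5$ is the splitting type: $\gr^1_C(\omega^*) \simeq (0) \toplus \bigl(-1+\tfrac{m+3}{2}P^\sharp\bigr)$, which as an $\OOO_C$-module is $\OOO_C \oplus \OOO_C(-1)$ for $m \ge 5$ and $\OOO_C \oplus \OOO_C$ for $m = 3$. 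When $m \ge 5$ the second summand has no global sections, so $\ov{s}$ is forced into the $(0)$ summand, whose $\ell$-basis at $R$ is $w_2/\Omega$; hence the local equation of $E_X$ is $w_2$ up to units and one eliminates $w_2$ (not $w_4$, as you wrote) from $\gamma$, leaving $\ov{\gamma} \equiv w_1w_3 + c_1w_4^2$. When $m = 3$ both summands contribute, the general section at $R$ is $(w_2 + t w_4)/\Omega$ for general $t \in \CC$, and after eliminating $w_2 + tw_4$ one finds $\ov{\gamma} \equiv w_1w_3 + (c_3t^2 + c_2t + c_1)w_4^2$; this is why the criterion for $n=1$ becomes $(c_1,c_2,c_3) \ne (0,0,0)$.

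Two related corrections: you cannot ``prescribe freely'' the values $\ov{s}(P), \ov{s}(Q), \ov{s}(R)$---Corollary \ref{surj_on_omega} only controls the section on $D$ near $P$, and for $m \ge 5$ the space $H^0(\gr^1_C(\omega^*))$ is one-dimensional, so once $\ov{s}(P) \ne 0$ the values at $Q$ and $R$ are determined (this is exactly what makes $\ov{s}$ nowhere vanishing). Also, since $P$ is ordinary by Lemma \ref{(2.12.2)} there is no $y_4$ and no hypersurface equation $\alpha$ at $P$; the singularity $(E_X,P)$ is simply the cyclic quotient $(y_1,y_3)/\muu_m(1,-1) = A_{m-1}$. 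No deformation argument is needed anywhere in this lemma.
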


\begin{proof}
There is an $\ell$-isomorphism 
$\gr^{1}_{C}(\omega {^*}) \simeq (0) \toplus (-1+\frac{m+3}{2}P^\sharp)$. 
Let $D=\{y_{1}+h=0\}/\muu_{m} \in |-2K_{X}|$ 
as before. We treat the case $m\ge 5$. 
Then 
$\gr^{1}_{C}\omega {^*} \simeq \OOO_{C} \oplus \OOO_{C}(-1)$ and $H^{0}(\OOO(-K_{X}))=H^{0}(\gr^{2}_{C}(\omega {^*},J))$. 
As in \ref{ppar-53} one has $H^0(\OOO_X(-K_X)) \twoheadrightarrow \omega_D \mod (y_2,y_3)^2\omega_{D^{\sharp}}$.
So a 
general section $s \in H^{0}(\OOO(-K_{X}))$ induces $(y_{2}+\cdots)/\Omega$,
up to some 
units whence induces a non-zero global section $\ov{s}$ of $\gr^{1}_{C}\omega {^*}$. Hence 
$\ov{s}$ is nowhere vanishing 
and the defining equations of $E_{X}=\{s=0\}$ 
are $y_{2}$, $z_{2}$ and $w_{2} \mod F^{2}_{C}\OOO$ up to units at $P$, $Q$ and $R$, respectively. Then $ 
E_{X}$ is smooth outside of $P$, $Q$ and $R$, $(E_{X},P) \simeq (y_{1},y_{3})/\muu_{m}(1,-1)$, $ 
(E_{X},Q) \simeq (z_{1},z_{3})/\muu_{2}(1,1)$ and $(E_{X},R) \simeq (w_{1},w_{3},w_{4};\ov{\gamma})$, where 
$\ov{\gamma}(w_{1},w_{3},w_{4}) 
\equiv w_{1}w_{3}+c_{1}w^{2}_{4} 
\mod (w_{3},w^{2}_{4})(w_{3},w_{4})$. We are done in case $m\ge 5$. In case $m=3$, we can see that $\gr^{1}_{C}\omega {^*} \simeq (0)\toplus(0)$ and 
$H^{0}(\OOO(-K_{X})) \twoheadrightarrow 
H^{0}(\gr^{1}_{C}\omega {^*})$ and we get a similar assertion on $E_{X}$ except 
that $\gamma \equiv w_{1}w_{3}+(c_{3}t^{2}+c_{2}t+c_{1})w^{2}_{4}$ for some general $t \in {\CC}$. Thus 
we are done in case $m=3$. 
\end{proof}

\begin{lemma}[{\cite[(2.12.6)]{Kollar-Mori-1992}}]
\label{(2.12.6)} 
If $m\ge 5$, then $c_{1} \neq0$ and $n=1$ in Lemma \xref{(2.12.5)}. Thus the case \xref{(2.2.3')} holds when $m\ge 5$.
\end{lemma}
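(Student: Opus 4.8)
The plan is to assume $c_1=0$ and derive a contradiction, along the lines of Lemmas \ref{(2.12.2)}--\ref{(2.12.3)}. First observe that $c_1=0$ forces $n\ge 2$ in Lemma \ref{(2.12.5)}: by \ref{(2.12.4)} and the computation in the proof of \ref{(2.12.5)}, for $m\ge 5$ the equation of $(E_X,R)$ after eliminating $w_2$ is $\ov{\gamma}\equiv w_1w_3+c_1w_4^2 \mod (w_3,w_4^2)(w_3,w_4)$, which for $c_1=0$ has quadratic part of rank $2$; equivalently $\gamma\equiv w_1w_3 \mod J\cdot I_C$ at $R$, so the generator $w_4^2$ of $J$ is not ``used up'' by the equation of $X$. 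This extra room at $R$ is what one exploits: I would keep the $C$-laminal ideal $J$ of \ref{(2.12.4)} (or slightly enlarge it to a $C$-laminal ideal of width $3$ agreeing with $J$ to second order) and push the induced filtrations on $\OOO_X$ and $\omega_X$ one or two steps further than in the generic case.

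Next, compute the relevant $\ell$-graded pieces $\gr^n(\OOO,J)$ and $\gr^n(\omega,J)$ by the methods of \cite[(8.6), (8.12)]{Mori-1988}, using $i_P(1)=i_Q(1)=i_R(1)=1$, $\gr^0_C\omega\simeq(-1+\frac{m-1}{2}P^\sharp+Q^\sharp)$, the splitting of $\gr^1_C\OOO$ from \ref{(2.12.4)}, and above all the degenerate relation $\gamma\equiv w_1w_3 \mod JI_C$ at $R$, which makes the top graded pieces at $R$ strictly larger than in the case $c_1\ne 0$. As at the end of the proof of \ref{(2.12.2)}, the resulting $\ell$-exact sequences --- each $\ell$-split by \cite[(2.6)]{Kollar-Mori-1992} once the pertinent values of $\ql_C$ are checked to be $\ge -1$ --- should exhibit a graded piece of $\omega_X/F^N(\omega,J)$ of degree $\le -2$, and hence $H^1(\omega_X/F^N(\omega,J))\ne 0$ for a suitable $N$.

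At that point \cite[Theorem (4.4)]{Mori-Prokhorov-2008} gives $V:=\Spec_X\OOO_X/F^N(\OOO,J)\supset f^{-1}(o)$, so that $2=(-K_X\cdot f^{-1}(o))\le(-K_X\cdot V)$. On the other hand, near a general point $S\in C$ the ideal $F^N(\OOO,J)$ is a monomial ideal of small colength $\ell$ (for instance $\ell\le 9$ when $N=6$, and $\ell\le 6$ if the width-$2$ ideal already suffices), while $(-K_X\cdot C)=\frac{1}{2m}$ as in the proofs of \ref{(2.12.2)}--\ref{(2.12.3)}; therefore $(-K_X\cdot V)=\frac{\ell}{2m}<2$ for every $m\ge 5$, a contradiction. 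Hence $c_1\ne 0$, and Lemma \ref{(2.12.5)} yields $n=1$, so that $\Delta(E_X,C)=\Delta(E_Z,o')$ is the graph of $D_{m+2}$ and Case \ref{(2.2.3')} holds for $m\ge 5$.

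The main obstacle will be the $\ell$-graded bookkeeping at $R$ once $c_1=0$: choosing the laminal ideal and the integer $N$ so that the $\ell$-structures and $\ell$-splittings of the relevant sequences come out correctly, and pinning down the degrees precisely enough to force $H^1\ne 0$ while keeping the generic colength $\ell<4m$. As a consistency check, since $E_X$ is Du Val with $K_{E_X}=0$ the configuration $\tilde{C}+\sum\Gamma_i$ of $(-2)$-curves on the minimal resolution of $E_X$ must be negative definite, hence a union of Dynkin graphs of type $A$, $D$, $E$; with $c_1=0$ it would be the triple-leg graph with legs $m-1\ge 4$, $1$, $n\ge 2$, which is of type $D$ or $E$ only for $(m,n)=(5,2)$. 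Thus for $m\ge 7$ this remark alone already gives the contradiction, and should the cohomological argument need a boost at $m=5$ one can conclude by the deformation argument of Remark \ref{rem-def} --- a general deformation of $(X,C)$ being either an extremal neighborhood, where \cite[(2.12.6)]{Kollar-Mori-1992} forces $c_1\ne 0$, or again a $\QQ$-conic bundle germ.
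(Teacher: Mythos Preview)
Your overall plan --- assume $c_1=0$, compute the $\ell$-graded pieces of $\omega$ with respect to the width-$2$ laminal ideal $J$, obtain $H^1(\omega_X/F^N(\omega,J))\neq 0$, and contradict \cite[(4.4)]{Mori-Prokhorov-2008} via $2\le \ell/(2m)$ --- is exactly the paper's approach. The paper carries it out completely with $N=3$: once $c_1=0$ one has $w_3\in F^3(\OOO,J)$, hence $\gr^{2,1}(\OOO,J)\simeq\MMM^{\totimes2}\simeq(-1+2P^\sharp)$, and then
\[
0\to\bigl(-2+\tfrac{m+3}{2}P^\sharp+Q^\sharp\bigr)\to\gr^2(\omega,J)\to(-1+(m-1)P^\sharp)\to 0
\]
has a subsheaf of degree $\le -2$ for every $m\ge 5$, giving $H^1(\omega/F^3(\omega,J))\neq 0$ and the contradiction $2\le 4/(2m)$. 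So there is no need to go to $N=4$ or $6$, and your colength estimate ``$\ell\le 9$ when $N=6$'' is in any case incorrect (one gets $\ell=12$ for $F^6=J^3$).

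Your Dynkin ``consistency check'' is a genuine and pleasant alternative the paper does not use: since $E_X\in|-K_X|$ has $K_{E_X}\sim 0$ and only $A$-type singularities, the proper transform $\tilde C$ is a $(-2)$-curve on the minimal resolution $\tilde E_X$, and the Stein factorization $E_X\to E_Z$ is a birational contraction of the normal surface $E_X$, so by Grauert the configuration $\tilde C+\sum\Gamma_i$ is negative definite, hence ADE. With legs $m-1,\,1,\,n$ and $m\ge 7$ this is impossible for $n\ge 2$, so you are done at once. This is quicker than the cohomological computation when $m\ge 7$.

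The gap is at $m=5$. The Dynkin argument leaves open $(m,n)=(5,2)$ (type $E_8$), and your proposed deformation fallback is circular: if the general deformation of $(X,C)$ remains a $\QQ$-conic bundle germ you are back where you started, and you give no reason why the deformation should force $c_1\neq 0$ or become an extremal neighborhood. To close the case $m=5$ you must actually perform the $\ell$-graded computation above, which handles all $m\ge 5$ uniformly with $N=3$.
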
 

\begin{proof}
Assume that $m\ge 5$ and $c_{1}=0$. By 
$w_{1}w_{3} \in J\cdot I_{C}$, we 
have $w_{3} \in F^{3}(\OOO,J)$ 
and $\gr^{2}(\OOO,J)=\OOO_{C}w_{2}\oplus \OOO_{C}w^{2}_{4}$ at $R$. Thus there are $\ell$-isomorphisms (cf. the proof of Lemma \xref{(2.12.2)})
\[
\begin{array}{cl}
\gr^{1}(\OOO,J) &=(-1+P^\sharp+Q^\sharp), 
\\[10pt] 
\gr^{2,0}(\OOO,J) &=
\left(-1+\frac{m-1}{2}P^\sharp+Q^\sharp\right), \\[10pt] 
\gr^{2,1}(\OOO,J) &=\gr^{1}(\OOO,J)^{\totimes2} \simeq (-1+2P^\sharp). 
\end{array}
\]
Therefore,
\[
\textstyle
\gr^{1}(\omega,J) \simeq 
\left(-1+\frac{m+1}{2}P^\sharp\right), 
\]
\[
\textstyle
0\to\left(-2+\frac{m+3}{2}P^\sharp+Q^\sharp\right) 
\to \gr^{2}(\omega,J)\to (-1+(m-1)P^\sharp) \to 0. 
\]
>From the exact sequence 
\[
0\to \gr^{2}(\omega,J) \to \omega_X/F^{3}(\omega,J)
\to \omega_X/F^{2}(\omega,J)\to 0
\]
we obtain $H^{1}(\omega /F^{3}(\omega,J)) \neq0$.
Hence $2\le 4/(2m)$ \cite[(4.4)]{Mori-Prokhorov-2008}, 
a contradiction.
\end{proof}

\begin{lemma}[{cf. 
\cite[(2.12.7)]{Kollar-Mori-1992}}]
\label{(2.12.7)} 
Assume $m=3$. 
If $(c_{1},c_{2},c_{3}) \neq 0$ \textup(resp. $=0$\textup)
then $n=1$ \textup(resp. $=2$\textup) in Lemma \xref{(2.12.5)}. 
Thus the case \xref{(2.2.3')} holds when $m =3$.
\end{lemma}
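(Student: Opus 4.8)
The strategy is to argue case by case on $(c_1,c_2,c_3)$, following the pattern of Lemmas \ref{(2.12.5)} and \ref{(2.12.6)}; the decisive difference from the case $m\ge 5$ is that for $m=3$ the class $3P^\sharp$ is integral, so the cohomological obstruction that forced $c_1\neq 0$ in Lemma \ref{(2.12.6)} now leaves room for, and in fact produces, the value $n=2$ instead of a contradiction.

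If $(c_1,c_2,c_3)\neq 0$ we get $n=1$ directly: by the $m=3$ part of the proof of Lemma \ref{(2.12.5)} a general $E_X\in|-K_X|$ has $(E_X,R)\simeq(w_1,w_3,w_4;\bar\gamma)$ with $\bar\gamma\equiv w_1w_3+(c_3t^2+c_2t+c_1)w_4^2\bmod(w_3,w_4^2)(w_3,w_4)$ for a general $t\in\CC$, and since the polynomial $c_3T^2+c_2T+c_1$ is not identically zero its value at a general $t$ is nonzero, so $(E_X,R)$ is an $A_1$-point; then $\Delta(E_X,C)$ of Lemma \ref{(2.12.5)} is the $D_5$-graph and $(E_Z,o')$ is $D_5=D_{m+2}$, which is the first alternative of Case \ref{(2.2.3')}. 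If $(c_1,c_2,c_3)=0$, the $w_4^2$-term of $\bar\gamma$ is absent, so $\bar\gamma\equiv w_1w_3\bmod(w_3,w_4^2)(w_3,w_4)$; absorbing the $w_3$-divisible terms into a new coordinate we may write $\bar\gamma=w_3v+w_4^3\cdot(\text{power series})$, hence $(E_X,R)$ is of type $A_n$ with $n\ge 2$ by Lemma \ref{(2.12.5)}, with $n=2$ exactly when the constant term of that power series --- equivalently, the coefficient of $w_4^3$ in the ``$w_3$-free part'' of $\gamma$ --- is nonzero. Granting $n=2$, $\Delta(E_X,C)$ is the $E_6$-graph and $(E_Z,o')=E_6$, again Case \ref{(2.2.3')}. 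Thus everything comes down to proving $n\le 2$.

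To prove $n\le 2$ I would assume $n\ge 3$ and derive a contradiction as in Lemma \ref{(2.12.6)}. Then $\gamma\equiv w_1w_3$ modulo $\tilde J\cdot I_C$ for a suitable $C$-laminal ideal $\tilde J$ of width $3$, chosen so that $w_3$ and $w_4^3$ are pushed deep into $F^\bullet(\OOO,\tilde J)$; if necessary one first reduces to the monomializable situation by the deformation argument of Remark \ref{rem-def}, the point being that if $(X,C)$ degenerates to an extremal neighborhood the assertion follows from \cite[(2.12.7)]{Kollar-Mori-1992}. One then computes the $\ell$-structures of the graded pieces $\gr^{n,i}(\OOO,\tilde J)$ and $\gr^{n,i}(\omega,\tilde J)$, forms the $\ell$-exact sequences \eqref{eq-omegaJF} for $\omega_X/F^\bullet(\omega,\tilde J)$ (with $J=\tilde J$), and reads off $H^1(C,\omega_X/F^k(\omega,\tilde J))\neq 0$ for the appropriate $k$. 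By \cite[(4.4)]{Mori-Prokhorov-2008} this forces $V:=\Spec_X\OOO_X/F^k(\OOO,\tilde J)\supset f^{-1}(o)$, so $2=(-K_X\cdot f^{-1}(o))\le(-K_X\cdot V)=\tfrac1{2m}$ times the length of the fiber of $\OOO_X/F^k(\OOO,\tilde J)$ at a general $S\in C$; for $m=3$ this length is only a few units, which is the required contradiction.

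The heart of the matter --- and the main obstacle --- is this last step: choosing the correct width-$3$ laminal ideal $\tilde J$ and, above all, getting the $\ell$-structures, and hence the degrees, of its graded pieces right, since for $m=3$ the integrality of $3P^\sharp$ shifts those degrees relative to Lemma \ref{(2.12.6)}. One has to check that this shift is precisely what turns the $m\ge 5$ inequality ``$2\le 4/(2m)$'' into the borderline case, so that $n\ge 3$ (which would give $(E_Z,o')=E_7$ or $E_8$) is excluded while $n=2$ ($E_6$) survives.
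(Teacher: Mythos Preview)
Your treatment of the case $(c_1,c_2,c_3)\neq 0$ is correct and matches the paper: this is exactly the $m=3$ clause of Lemma \xref{(2.12.5)}.

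For $(c_1,c_2,c_3)=0$, the paper's argument is much shorter and goes by a different route. After noting $w_3\in F^3_C\OOO$, one changes $w_1,w_3$ so that $\gamma=w_1w_3+\delta(w_2,w_4)$ with $\delta$ a power series of order $d\ge 3$. The bound $d\le 3$ is then obtained by \emph{directly citing} the numerical inequality of \cite[(2.12.8)]{Kollar-Mori-1992}:
\[
2\cdot\ldeg_C(-1+P^\sharp+Q^\sharp)+\frac{1}{d}\ \ge\ 0,
\]
which for $m=3$ reads $-\tfrac13+\tfrac1d\ge 0$, forcing $d=3$. Finally, since for $m=3$ one has the symmetric $\ell$-splitting $\gr^1_C\OOO\simeq(-1+P^\sharp+Q^\sharp)\toplus(-1+P^\sharp+Q^\sharp)$, it is easy to see in the proof of Lemma \xref{(2.12.5)} that $n=d-1=2$. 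No width-$3$ laminal ideal, no $H^1$-computation, and no fiber-containment argument via \cite[(4.4)]{Mori-Prokhorov-2008} is needed.

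Your width-$3$ laminal proposal is, as you yourself concede, incomplete: the ideal $\tilde J$ is never specified, the $\ell$-structures of $\gr^{n,i}(\omega,\tilde J)$ are never computed, and the final length inequality is never checked. It is plausible that such an argument could be pushed through --- the inequality of \cite[(2.12.8)]{Kollar-Mori-1992} is itself proved by closely related techniques --- but there is no reason to redo that work by hand when it can simply be invoked. The $\ldeg$-inequality controls the order $d$ of $\delta$ directly and bypasses the monomialization and deformation gymnastics you anticipated.
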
 

\begin{proof}
Assume that $(c_{1},c_{2},c_{3})=0$. Then $w_{3} \in F^{3}_{C}\OOO$. Changing $w_{1}$ and $w_{3}$, 
we may further assume 
$\gamma=w_{1}w_{3}+\delta (w_{2},w_{4})$, 
where $\delta$ 
is a power series in $w_{2}$ and $w_{4}$ of order $d\ge 3$. 
Then $d=3$ because 
$2\cdot \ldeg_{C}(-1+P^\sharp+
Q^\sharp)+1/d\ge 0$ 
\cite[(2.12.8)]{Kollar-Mori-1992}.
In the proof of Lemma \xref{(2.12.5)}, it is easy
to see $n=d-1$ from $\gr_C^1(\OOO)=(-1+P^{\sharp}+Q^{\sharp})
\toplus (-1+P^{\sharp}+Q^{\sharp})$.
\end{proof} 

Thus we end up with the case \xref{(2.2.3')} for
\type{IA}$+$\type{IA}$+$\type{III}, and
the proof of Theorem \xref{(2.2)} is completed 
for \type{IA}$+$\type{IA}$+$\type{III}.

\section{Case of \type{IA}$+$\type{IA}}
\label{(2.13)}

\begin{say}[\textbf{Cf. \cite[(2.13)]{Kollar-Mori-1992}}]
In this section, we consider the case \type{IA}$+$\type{IA}.
Note that $|-K_X|$ has a Du Val member when
both indices are $3$ or larger \cite{Mori-Prokhorov-2008}.
Thus we can assume that the singular locus of $(X,C)$ 
consists of 
a \type{IA} point $P$ of odd index $m\ge 3$ and
a \type{IA} point $Q$ of index $2$ \cite{Mori-Prokhorov-2008}.
We know that $\siz_{P}=1$, by \cite[(8.5)]{Mori-Prokhorov-2008}.
\end{say}

We start with the set-up. 
The following is very similar to Lemma \ref{(2.12.1)}.

\begin{lemma}[{cf. \cite[(2.13.1)]{Kollar-Mori-1992}}]
\label{(2.13.1)} 
We can write 
\[
\begin{array}{cl}
(X,P)&=(y_{1},y_{2},y_{3},y_{4}; \alpha)/\muu_{m}(1,
\frac{m+1}2,-1,0;0)\supset (C,P)=y_{1}\text{\rm -axis}/\muu_{m},
\\[10pt] 
(X,Q)&=(z_{1},z_{2},z_{3},z_{4}; \beta)/\muu_{2}(1,1,1,0;0) 
\supset (C,Q)=z_{1}\text{\rm -axis}/\muu_{2}, 
\end{array}
\]
using equations $\alpha$ and $\beta$ 
such that $\alpha \equiv y_{1}y_{3}\mod (y_{2},y_{3})^{2}+(y_{4})$. 
\end{lemma}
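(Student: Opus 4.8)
The plan is to follow the proof of Lemma \ref{(2.12.1)}, deleting the third point $R$ together with the conditions imposed there on $\beta$ and $\gamma$; the present statement only asks for the weight vectors and for $\alpha\equiv y_1y_3\mod(y_2,y_3)^2+(y_4)$, so it is in effect a weakening of the $P,Q$ part of Lemma \ref{(2.12.1)}, and the argument should be short. First I would put $(X,P)$ in the standard \type{IA} normal form of index $m$,
\[
(X,P)=(y_1,y_2,y_3,y_4;\alpha)/\muu_m(a_1,a_2,-a_1,0;0),
\]
with $C^\sharp$ the locus of $(t^{a_1},t^{a_2},0,0)$, $a_1,a_2$ positive integers and $\gcd(a_1a_2,m)=1$; and likewise $(X,Q)$ in the standard \type{IA} normal form of index $2$, which (exactly as for $Q$ in Lemma \ref{(2.12.1)}) has the shape $(z_1,z_2,z_3,z_4;\beta)/\muu_2(1,1,1,0;0)$ with $(C,Q)$ the $z_1$-axis.

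The substantive step is to pin down the weights at $P$: one must show $a_1=1$ and $a_2=(m+1)/2$. For this I would first record $w_P(0)=(m-1)/2m$: since $Q$ has index $2$ we have $w_Q(0)\in\frac12\ZZ$, and the $\QQ$-conic bundle relation among $(K_X\cdot C)$, $w_P(0)$ and $w_Q(0)$ from \cite{Mori-Prokhorov-2008} (and $i_P(1)=i_Q(1)=1$) forces this value, just as the computation $w_P(0)=1+(K_X\cdot C)-w_Q(0)=(m-1)/2m$ was carried out for \type{IA}$+$\type{IA}$+$\type{III} at the start of \S\ref{(2.12)}. Then $a_2=(m+1)/2$ by \cite[(4.9)(i)]{Mori-1988}, while $\siz_P=1=U(a_1a_2)$ gives $a_1a_2\le m$ and hence $a_1=1$, exactly as in the proof of Lemma \ref{(2.12.1)}.

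The remaining normalizations are copied from Lemma \ref{(2.12.1)}: the $\muu_m$-equivariant substitution $y_2\mapsto y_2-y_1^{(m+1)/2}$ makes $(C,P)$ the $y_1$-axis, and, $P$ being a $cA$ point, \cite[(B.1)(g)]{Mori-1988} and \cite[(2.16)]{Mori-1988} let us choose $\alpha\equiv y_1y_3\mod(y_2,y_3)^2+(y_4)$; no normalization of $\beta$ is required, so the $Q$-part is even simpler. The one point not literally contained in the proof of Lemma \ref{(2.12.1)} is the value $w_P(0)=(m-1)/2m$, which there was available from the set-up preceding that lemma but here must be re-extracted from the \type{IA}$+$\type{IA} classification of \cite{Mori-Prokhorov-2008}; I expect that numerical bookkeeping --- confirming $(K_X\cdot C)$ and $w_Q(0)$ in the \type{IA}$+$\type{IA} case --- to be the only real obstacle, after which every remaining step matches one in the proof of Lemma \ref{(2.12.1)}.
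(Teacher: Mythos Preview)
Your proposal is correct and is exactly the approach the paper intends: the paper gives no proof for Lemma~\ref{(2.13.1)} beyond the remark ``very similar to Lemma~\ref{(2.12.1)}'', and your expansion tracks that proof step by step (normal form at $P$, $w_P(0)=(m-1)/2m \Rightarrow a_2=(m+1)/2$ via \cite[(4.9)(i)]{Mori-1988}, $\siz_P=1 \Rightarrow a_1=1$, then the coordinate change and \cite[(B.1)(g)]{Mori-1988} for $\alpha$; nothing to do for $\beta$).

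One small correction: drop the parenthetical ``(and $i_P(1)=i_Q(1)=1$)''. In the \type{IA}$+$\type{IA} case these are established only \emph{after} Lemma~\ref{(2.13.1)} (Lemmas~\ref{(2.13.2)}--\ref{(2.13.3)}), and in fact $i_Q(1)$ can equal $2$ (case~\ref{(2.13.3.2)}). Fortunately they are not needed: the relation $w_P(0)=1+(K_X\cdot C)-w_Q(0)$ from \cite{Mori-Prokhorov-2008} involves only $\gr^0_C\omega$ data, and $w_Q(0)\in\tfrac12\ZZ$ holds for any index-$2$ point regardless of $i_Q(1)$. With that adjustment your ``numerical bookkeeping'' goes through unchanged, since a \type{III} point contributes $w_R(0)=0$ and so its absence does not alter the identity.
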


We recall 
$\ell(P)=\mt{length}_{P^\sharp}
\bigl(I{^\sharp}^{(2)}/I{^\sharp}^{2}\bigr)$, 
where $I^\sharp$ is the defining 
ideal of $C^\sharp$ in $(X^\sharp,P^\sharp)$ and $\ell (Q)$ is defined similarly. 

\begin{lemma}[{cf. \cite[(2.13.2)]{Kollar-Mori-1992}}]
\label{(2.13.2)}
Either $\ell (P) =0$ or $1$, and $i_{P}(1)=1$. 
\end{lemma}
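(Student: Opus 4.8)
The plan is to analyze the laminal structure near the index-$m$ point $P$, where the key invariant $\ell(P)$ measures the failure of $C^\sharp$ to be a complete intersection at $P^\sharp$. First I would recall from Lemma \ref{(2.13.1)} that $(X,P)=(y_1,y_2,y_3,y_4;\alpha)/\muu_m$ with $\alpha\equiv y_1y_3\bmod (y_2,y_3)^2+(y_4)$, so that $I_C^\sharp=(y_2,y_3,y_4)$ and $(y_2,y_3)$ already captures $\alpha$ modulo higher terms; the obstruction to $I_C^\sharp$ being generated by two elements lives in the module $I^{\sharp(2)}/I^{\sharp 2}$, and $\ell(P)$ is its length. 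Since $\siz_P=1$ by \cite[(8.5)]{Mori-Prokhorov-2008} and $\gcd(a_1a_2,m)=1$ with the weights pinned down, the general position of $\alpha$ forces the symbolic square to differ from the ordinary square in at most one dimension; more precisely, writing $\alpha=y_1y_3+(\text{terms in }(y_2,y_3)^2+(y_4))$, the variable $y_4$ (which has weight $0$) contributes the only possible extra generator of $I^{\sharp(2)}$ not in $I^{\sharp 2}$, so $\ell(P)\in\{0,1\}$. The case $\ell(P)=0$ is exactly when $P$ is essentially a complete intersection / ordinary-type point; $\ell(P)=1$ is the genuinely laminal case. This dichotomy should follow by the same local analysis as in \cite[(2.13.2)]{Kollar-Mori-1992}, using the explicit form of $\alpha$ and the size-one condition.

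Next I would establish $i_P(1)=1$. Here the strategy is to compute the invariant $i_P(1)$, which by \cite[(2.2)]{Mori-1988} compares $\wedge^2(\gr^1_C\OOO)\otimes\Omega^1_C$ with $\gr^0_C\omega$ along the normalization $\tilde C$, exactly as done in the \type{IIB} lemma earlier in the excerpt. Using the parametrization of $C^\sharp$ as the locus of $(t,t^{(m+1)/2},0,0)$ and the $\ell$-free $\ell$-basis of $\gr^1_C\OOO$ coming from (a subset of) $\{y_2,y_3,y_4\}$, one pulls back to $\tilde C$ and reads off the order of vanishing; the point is that since $P$ has $\siz_P=1$ and $a_1=1$, the parametrizing uniformizer $t$ is itself (essentially) a coordinate on $C$ at $P$, which forces the discrepancy exponent to be $1$ rather than $\ge 2$. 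Concretely, $\gr^0_C\omega|_{\tilde C}$ is generated by $t^{?}\Omega$ and $\wedge^2(\gr^1_C\OOO)\otimes\Omega^1_C$ by $t^{?}\cdot(\text{basis})\otimes dt$, and matching the powers of $t$ gives $i_P(1)=1$. This is the same bookkeeping as in the \type{IIB} case but with the $cA/\muu_m$ weights, and should be routine once the correct $\ell$-bases are fixed.

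The main obstacle I expect is pinning down the $\ell$-free $\ell$-basis of $\gr^1_C\OOO$ at $P$ correctly in both sub-cases $\ell(P)=0$ and $\ell(P)=1$, since when $\ell(P)=1$ the ideal $I_C$ is not generated by two elements and one must be careful about which of $y_2,y_3,y_4$ (or which combinations) furnish a genuine $\ell$-basis versus merely generators; a sloppy choice will give the wrong power of $t$ and hence the wrong value of $i_P(1)$. A secondary point is to rule out $\ell(P)\ge 2$: one argues that a second independent element of $I^{\sharp(2)}\setminus I^{\sharp 2}$ would force $\alpha$ to lie in too small an ideal, contradicting that $\alpha$ defines an isolated (terminal) singularity of the prescribed type, or alternatively contradicting $\siz_P=1$ via \cite[(2.8)]{Mori-1988}. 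Once these local facts are nailed down, the statement follows, and $i_P(1)=1$ feeds directly into the degree computation $\deg\gr^1_C\OOO=-1$ used in the subsequent analysis of the \type{IA}$+$\type{IA} case, paralleling the role of the analogous lemma in \cite[\S2]{Kollar-Mori-1992}.
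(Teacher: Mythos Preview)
Your proposal overcomplicates what the paper treats as essentially a one-line citation. The paper's entire proof reads: ``This follows from $\alpha \equiv y_{1}y_{3}$ and \cite[(2.16)]{Mori-1988}.'' The point is that \cite[(2.16)]{Mori-1988} already contains both ingredients: from the normal form $\alpha \equiv y_1^{\,r} y_i \bmod (y_2,y_3,y_4)^2$ one reads off $\ell(P)=r$ (here $r=1$, so $\ell(P)\le 1$, with $\ell(P)=0$ exactly when the $y_4$-coefficient makes $P$ ordinary), and the formula $i_P(1)=[\ell(P)/2]+1$ then gives $i_P(1)=1$ immediately. You are attempting to re-derive the content of \cite[(2.16)]{Mori-1988} from scratch rather than invoke it.

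Beyond being unnecessarily long, your sketch contains concrete errors. First, after the coordinate change in Lemma~\ref{(2.13.1)} the curve $C^\sharp$ is the $y_1$-axis, parametrized by $(t,0,0,0)$, not by $(t,t^{(m+1)/2},0,0)$ as you write; that extra $t^{(m+1)/2}$ was removed precisely by replacing $y_2$ with $y_2 - y_1^{(m+1)/2}$ (cf.\ the proof of Lemma~\ref{(2.12.1)}). Second, your heuristic that ``$y_4$ contributes the only possible extra generator of $I^{\sharp(2)}$ not in $I^{\sharp 2}$'' misidentifies what $\ell(P)$ measures: the length of $I^{\sharp(2)}/I^{\sharp 2}$ is governed by the $y_1$-order of the linear part of $\alpha$ (here $y_1 y_3$ has order~$1$), not by $y_4$ acting as an ``extra generator.'' Third, you do not need $\siz_P=1$ for this lemma at all. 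If you want a self-contained argument, the correct route is to write $\alpha \equiv y_1 y_3 + (\text{terms in }(y_2,y_3,y_4)^2\text{ or }y_4)$ and apply \cite[(2.16)]{Mori-1988} directly, exactly as the paper does; the direct $\wedge^2$-computation you propose for $i_P(1)$ is redundant once the formula $i_P(1)=[\ell(P)/2]+1$ is available.
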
 

\begin{proof}
This follows from $\alpha \equiv y_{1}y_{3}$ and \cite[(2.16)]{Mori-1988}. 
\end{proof}

\begin{lemma}[{cf. \cite[(2.13.3)]{Kollar-Mori-1992}}]
\label{(2.13.3)}
Either 

\begin{case}[{\cite[(2.13.3.1]{Kollar-Mori-1992}}]
\label{(2.13.3.1)} $\ell (Q) =0$ or $1$ (in particular, the point $(X,Q)$ 
is of type $cA/2$), $i_{Q}(1)=1$, and 
$\gr^{1}_{C}\OOO \simeq \OOO\oplus \OOO(-1)$, or 
\end{case}

\begin{case}[{\cite[(2.13.3.2]{Kollar-Mori-1992}}]
\label{(2.13.3.2)} 
$\ell (Q)=2$, $i_{Q}(1)=2$, 
$\gr^{1}_{C}\OOO \simeq \OOO(-1)\oplus \OOO(-1)$, 
and $P$ is ordinary:
\[
\textstyle
(X,P)=(y_{1},y_{2},y_{3})/\muu_{m}\left(1,
\frac{m+1}2,-1\right) 
\supset (C,P)=y_{1}\text{-axis}/\muu_{m}.
\]
\end{case}
\end{lemma}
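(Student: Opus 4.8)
The plan is to follow the pattern already used in the previous lemmas of this section: analyze the $\ell$-structures coming from Lemma \ref{(2.13.1)}, compute $\gr^1_C\OOO$ and its splitting, and then, in the case where $Q$ is \emph{not} of type $cA/2$ (so $\ell(Q)=2$), use the standard $H^1$-vanishing obstruction of \cite[(4.4)]{Mori-Prokhorov-2008} to force $P$ to be ordinary. First I would recall from \cite[(2.16)]{Mori-1988} and the classification of index-$2$ terminal points that for the \type{IA} point $Q$ of index $2$ one has $\ell(Q)\in\{0,1,2\}$, with $\ell(Q)\le 1$ exactly when $(X,Q)$ is of type $cA/2$, and that $i_Q(1)=1$ if $\ell(Q)\le 1$ while $i_Q(1)=2$ if $\ell(Q)=2$; this is the dichotomy of the statement. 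Then $\deg\gr^1_C\OOO$ is computed from the formula \cite[(2.3.4)]{Mori-1988} together with $i_P(1)=1$ (Lemma \ref{(2.13.2)}) and $\siz_P=1$: in Case \ref{(2.13.3.1)} this gives $\deg\gr^1_C\OOO=0$, so $\gr^1_C\OOO\simeq\OOO\oplus\OOO(-1)$ (the summand of degree $0$ cannot split off a positive piece since $H^1$ of the sheaf would then be nonzero in a way contradicting \cite[Corollary (2.3.1)]{Mori-Prokhorov-2008}), while in Case \ref{(2.13.3.2)} it gives $\deg\gr^1_C\OOO=-2$, hence $\gr^1_C\OOO\simeq\OOO(-1)\oplus\OOO(-1)$ by the same semicontinuity/vanishing input.

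Next, in Case \ref{(2.13.3.2)} it remains to show $P$ is ordinary, i.e.\ that $y_4$ can be eliminated. The approach is exactly that of Lemma \ref{(2.12.2)}: suppose $P$ is not ordinary, so we may take $\alpha\equiv y_1y_3 \mod (y_2,y_3,y_4)^2$. One may also invoke the deformation argument of Remark \ref{rem-def}: if $(X,C)$ deforms to an extremal neighborhood, the assertion follows from \cite[(2.13.3)]{Kollar-Mori-1992}, so we may assume the generic deformation stays a $\QQ$-conic bundle germ, and in particular we may assume $Q$ is simplified (e.g.\ $\beta=z_4+\cdots$ after L-deformation) to the extent that $\{z_2,z_3\}$ give an $\ell$-basis of $\gr^1_C\OOO$ at $Q$. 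Then $\gr^0_C\omega$ and $\gr^1_C\OOO$ are computed from Lemma \ref{(2.13.1)} as in \ref{(2.12.4)}, one forms the $C$-laminal ideal $J$ of width $2$ with $J/F^2_C\OOO$ the correct summand, checks $(1,2,2)$-monomializability at $P$ (using a coordinate change $y_3\mapsto y_3+y_4^2(\cdots)$ and, if the coefficient vanishes, the deformation argument again) and at $Q$, and writes out the $\ell$-isomorphisms for $\gr^{n,j}(\OOO,J)$ and $\gr^{n,j}(\omega,J)$ just as in the proof of Lemma \ref{(2.12.2)}.

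The main obstacle, and the heart of the proof, is the numerical contradiction: from the exact sequences \eqref{eq-omegaJF} one must show $H^1(\omega/F^{k}(\omega,J))\ne 0$ for the appropriate $k$, which by \cite[(4.4)]{Mori-Prokhorov-2008} forces $\Spec_X\OOO_X/F^k(\OOO,J)\supset f^{-1}(o)$, and then the length computation at a general point $S\in C$ (where $F^\bullet(\OOO,J)$ is the standard monomial filtration $I_C=(x,y)$, $J=(x,y^2)$, etc.) gives $2\le(-K_X\cdot f^{-1}(o))\le(-K_X\cdot V)=\frac{1}{2m}\cdot(\text{small integer})$, which is absurd for $m\ge 3$. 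One has to pick $k$ so that the degrees of the $\ell$-invertible pieces of $\gr^{n}(\omega,J)$ for $n<k$ are all $\le -1$ (forcing the $H^1$'s to add up) while the length bound stays below $4m$; I expect this to work with $k=4$ exactly as in Lemma \ref{(2.12.2)}, the index-$2$ point $Q$ playing a role analogous to the \type{III} point $R$ there but contributing the extra $Q^\sharp$ in the quasi-line bundles. Once the contradiction is reached, $P$ is ordinary, completing Case \ref{(2.13.3.2)} and hence the lemma.
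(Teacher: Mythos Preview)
Your proposal has a genuine gap and also misses the much shorter argument the paper actually uses.

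\textbf{The gap.} You assert that $\ell(Q)\in\{0,1,2\}$ follows from \cite[(2.16)]{Mori-1988} together with the classification of index-$2$ terminal points. It does not: $\ell(Q)$ is a length depending on how $C^\sharp$ sits inside $(X^\sharp,Q^\sharp)$, not just on the singularity type of $(X,Q)$, and a priori can be any non-negative integer. The vanishing $H^1(\gr^1_C\OOO)=0$ together with $i_P(1)=1$ and $i_Q(1)=[\ell(Q)/2]+1$ only yields $i_Q(1)\le 2$, i.e.\ $\ell(Q)\le 3$; the case $\ell(Q)=3$ is not excluded by your argument. (Also, your side remark that ``$\ell(Q)\le 1$ exactly when $(X,Q)$ is $cA/2$'' is false: in Case~\ref{(2.13.3.2)} one has $\ell(Q)=2$ while $(X,Q)$ may still be $cA/2$; only the implication $\Rightarrow$ holds.)

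\textbf{What the paper does instead.} The paper does not redo the filtration computations of Lemma~\ref{(2.12.2)}. It uses a single deformation: assuming $\ell(Q)=r\ge 2$, write $\beta\equiv z_1^r z_i\bmod (z_2,z_3,z_4)^2$ and deform to $\beta+t z_1^{r-2}z_i=0$. For $t\neq 0$ this splits off a \type{III} point from $Q$, so $(X_t,C_t)$ is of type \type{IA}$+$\type{IA}$+$\type{III} (or an extremal neighborhood of that type), with the deformation trivial near $P$. Lemmas~\ref{(2.12.2)} and~\ref{(2.12.3)} (or \cite[(2.12)]{Kollar-Mori-1992} in the extremal-neighborhood case) then give at once that $P$ is ordinary and that $Q_t$ is ordinary; since $\ell(Q_t)=r-2$, this forces $r=2$. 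The $\gr^1_C\OOO$ splittings and the relation $i_Q(1)=[\ell(Q)/2]+1$ are then immediate.

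\textbf{Why your route for ``$P$ ordinary'' does not work as written.} You propose to L-deform $Q$ to make it ordinary and then mimic Lemma~\ref{(2.12.2)}. But once $Q$ is ordinary you are in Case~\ref{(2.13.3.1)}, where the statement does \emph{not} assert that $P$ is ordinary; so that deformation tells you nothing about $P$ in Case~\ref{(2.13.3.2)}. The correct deformation is the one above, which creates a \type{III} point and hence lands in \type{IA}$+$\type{IA}$+$\type{III}, where both conclusions are already available.
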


\begin{proof} 
The assertion on $i_{Q}(1)$ follows from the one on $\ell (Q)$ by 
$i_Q(1)=[\ell(Q)/2]+1$
\cite[(2.16)]{Mori-1988}. 
We assume $\ell (Q)\ge 2$ and denote it by $r$. Thus 
we may choose $\beta \equiv z^{r}_{1}z_{i} \mod (z_{2},z_{3},z_{4})^{2}$, where $i=3$ (resp. 4) 
if $r \equiv 1$ (resp. 0) $\mod (2)$. 
If we extend (see Remark \ref{rem-def}) the deformation 
$\beta+
tz^{r-2}_{1}z_{i}=0$ of $(X,Q)$ to a deformation 
$(X_{t}, C_{t})\ni Q_{t}$ of 
$(X, C)\ni Q$ which is trivial outside of a small neighborhood
of $Q$, then $X_{t}$ has two 
\type{IA} points and one \type{III} point on $C_{t}$ and $\beta+tz^{r-2}_{1}z_{i} =0$ is the 
equation for $(X_{t}^{\sharp},Q_{t}^{\sharp})$ 
(cf. \cite[(4.12.2)]{Mori-1988}). Hence 
$Q_{t}$ is ordinary, that is, $r=2$ by 
Lemma \xref{(2.12.3)} or \cite[(2.12)]{Kollar-Mori-1992}. 
\end{proof}

First we treat the special case \xref{(2.13.3.2)}. 

\begin{lemma}[{cf. \cite[(2.13.4)]{Kollar-Mori-1992}}]
\label{(2.13.4)}
Assume that we are in the case \xref{(2.13.3.2)}.
Then holds the case \xref{(2.2.3)}, that is, the case \type{IA}$+$\type{IA}, 
and the singular point $Q$ is of type 
$cA/2$, $cAx/2$ or $cD/2$. 
\end{lemma}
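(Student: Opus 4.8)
The plan is to mimic the analysis of \S\ref{(2.12)} (especially Lemmas \ref{(2.12.5)}--\ref{(2.12.7)}), but without the auxiliary point $R$: in case \ref{(2.13.3.2)} we are given that $P$ is ordinary, $Q$ is of type $cA/2$ with $\ell(Q)=2$, $i_Q(1)=2$, and $\gr^1_C\OOO\simeq\OOO(-1)\oplus\OOO(-1)$. First I would pin down the $\ell$-structures: using Lemma \ref{(2.13.1)} together with $w_P(0)=(m-1)/2m$ (as computed for \type{IA}$+$\type{IA}$+$\type{III}) and the known index contributions at $Q$, compute $\ql_C(\gr^0_C\omega)$ and $\ql_C(\gr^1_C\OOO)$, and then split $\gr^1_C\OOO$ via \cite[(2.6)]{Kollar-Mori-1992}; this should give $\gr^1_C\OOO\simeq(-1+\tfrac{m-1}{2}P^\sharp+aQ^\sharp)\toplus(-1+bP^\sharp+cQ^\sharp)$ for explicit $a,b,c$, paralleling \eqref{(2.12.2.1)} and \ref{(2.12.4)}. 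One then introduces the $C$-laminal ideal $J$ of width $2$ with $J/F^2_C\OOO$ the first summand, and chooses coordinates at $Q$ so that $(X,Q)$ is presented as in Lemma \ref{(2.13.1)} with $J=(z_2,z_3,z_4^2)$ (or the analogous monomial form), writing $\beta\equiv z_1z_3+c_1z_4^2+\cdots \bmod J\cdot I_C$ for a constant $c_1$ and lower-order constants $c_2,c_3$ as in \ref{(2.12.4)}.

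Next I would run the section-extension argument exactly as in the proof of Lemma \ref{(2.12.5)}: take $D=\{y_1+h=0\}/\muu_m\in|-2K_X|$ with general $h$, apply Corollary \ref{surj_on_omega} with $n=2$ to get $H^0(\OOO_X(-K_X))$ surjecting onto $\omega_D$ modulo the appropriate square of the laminal ideal, and conclude that a general $s\in H^0(\OOO_X(-K_X))$ induces a nowhere-vanishing section $\ov s$ of $\gr^1_C(\omega^*)$ (the relevant summand being trivial or $\OOO_C(-1)$ with $\ov s(P)\ne 0$, $\ov s(Q)\ne 0$). This forces $E_X=\{s=0\}$ to contain $C$, to be smooth off $\{P,Q\}$, to have an $A_{m-1}$-singularity $(y_1,y_3)/\muu_m(1,-1)$ at $P$, and at $Q$ to be $(z_1,z_3,z_4;\ov\beta)/\muu_2$ with $\ov\beta\equiv z_1z_3+c_1z_4^2\bmod(\cdots)$ (with the extra $t$-dependence when $m=3$, as in Lemma \ref{(2.12.5)}). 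Thus $E_X$ is Du Val, hence so is $E_Z$ by $(K_X\cdot C)=0$, and the graph $\Delta(E_X,C)$ has the $A_{m-1}$ tail through $\bullet$ and a $D_{2k}$ (or $A_1+A_1$ when $k=1$) branch at $Q$, matching case \ref{(2.2.3)}; identifying $(E_X,Q)$ shows $Q$ is of type $cA/2$, $cAx/2$ or $cD/2$ according to the form of $\ov\beta$.

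The main obstacle, and the place where real work is needed, is the deformation/monomializability step at $Q$ that forces the branch to be a $D$-type (i.e., that $\gamma$-type obstruction $c_1\ne0$, or $(c_1,c_2,c_3)\ne0$ when $m=3$, and the length bound controlling $k$). This is the analogue of Lemmas \ref{(2.12.6)}--\ref{(2.12.7)}: one builds $\gr^n(\OOO,J)$ and $\gr^n(\omega,J)$, notes that if $c_1=0$ (resp.\ all $c_i=0$) then $z_3\in F^3(\OOO,J)$ so the graded pieces shift, and derives $H^1(\omega/F^{n}(\omega,J))\ne0$ for a suitable $n$; by \cite[(4.4)]{Mori-Prokhorov-2008} this gives $V=\Spec_X\OOO_X/F^{n}(\OOO,J)\supset f^{-1}(o)$ and hence $2\le(-K_X\cdot V)=\tfrac{1}{2m}\operatorname{length}_S\CC\{x,y\}/(\text{monomial ideal})$, a numerical contradiction. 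Here one must also invoke Remark \ref{rem-def}: if $(X,C)$ deforms to an extremal neighborhood the statement follows from \cite[(2.13.4)]{Kollar-Mori-1992}, so we may assume it stays a $\QQ$-conic bundle germ and that the needed $(1,2,2)$-monomializations at $P$ and $Q$ hold. Carrying the axial multiplicity $k$ (equivalently the order $d$ of the relevant power series) through these estimates, exactly as in the proof of Lemma \ref{(2.12.7)} where $n=d-1$, yields $n=2k$ (or $2k-1$ in the $cD/2$ case) and the constraint $k\ge2$ when $m=3$, completing case \ref{(2.2.3)}.
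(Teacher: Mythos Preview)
Your overall plan---run the \S\ref{(2.12)} argument with the role of the \type{III} point $R$ absorbed into the index-$2$ point $Q$, extend sections via Corollary~\ref{surj_on_omega}, and for $m\ge5$ kill $c_1=0$ by an $H^1$/length contradiction as in Lemma~\ref{(2.12.6)}---matches the paper. But the local picture at $Q$ is wrong. In case~\ref{(2.13.3.2)} one has $\ell(Q)=2$, not $1$; hence (cf.\ the proof of Lemma~\ref{(2.13.3)} and \cite[(2.16)]{Mori-1988}) the leading term of $\beta$ along $C^\sharp$ is $z_1^2 z_4$, not $z_1 z_3$. The correct data are $J^\sharp=(z_2,z_4,z_3^2)$ and
\[
\beta\equiv z_1^2 z_4+c_1 z_3^2+c_2 z_2 z_3+c_3 z_2^2 \bmod (z_4,z_3^2,z_2z_3,z_2^2)\cdot I_C,
\]
so after eliminating $z_4$ one gets $(E_X,Q)\simeq(z_1,z_3,z_4;\ov\beta)/\muu_2$ with $\ov\beta\equiv z_1^2 z_4+z_3^2$, which is $D_{2k}$ by Computation~\ref{(2.13.6)}. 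Your form $\beta\equiv z_1 z_3+c_1 z_4^2$ and $J=(z_2,z_3,z_4^2)$ corresponds to $\ell(Q)=1$ and would produce an $A_n$ at $Q$ instead. Relatedly, ``$Q$ is of type $cA/2$'' is not a hypothesis of~\ref{(2.13.3.2)}; it is part of the conclusion.

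Second, you mislocate the source of two facts. The condition $(c_1,c_2,c_3)\neq(0,0,0)$ and the statement that $Q$ is of type $cA/2$, $cAx/2$ or $cD/2$ are \emph{not} obtained by an $H^1$ contradiction in the style of Lemma~\ref{(2.12.7)} (note that in \ref{(2.12.7)} the vanishing of all $c_i$ is \emph{allowed} and gives $n=2$, not a contradiction); they follow immediately from the classification of terminal $3$-fold singularities \cite[(6.1)]{Reid-YPG1987} once $\beta$ is in the normal form above. Only the sharper $c_1\neq0$ for $m\ge5$ needs the $H^1$ argument. Finally, the inequality $k\ge2$ requires no separate estimate: $\ell(Q)=2>0$ already forces $(X^\sharp,Q^\sharp)$ to be singular, hence $k\ge2$ (see Remark~\ref{rem-on-2.2.3}).
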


\begin{proof}
The argument is quite similar to the case \type{IA}$+$\type{IA}$+$\type{III} (Section \xref{(2.12)}). 
As in the paragraph \xref{(2.12.4)}, 
there is an $\ell$-isomorphism 
\[
\textstyle
\gr^{1}_{C}\OOO \simeq 
\left(-1+\frac{m-1}{2}P^\sharp+Q^\sharp\right) \toplus (-1+P^\sharp+Q^\sharp), 
\]
and let $J$ be the $C$-laminal ideal such that 
\[
\textstyle
J/F^{2}_{C}\OOO=\left(-1+\frac{m-1}{2}P^\sharp+
Q^\sharp\right). 
\]
We may assume that $(y_{2},z_{2})$ (resp. $(y_{3},z_{3})$) are $\ell$-free $\ell$-bases of 
$\left(-1+\frac{m-1}{2}P^\sharp+Q^\sharp\right)$ 
(resp. $(-1+P^\sharp+Q^\sharp)$), $J^\sharp=
(z_{2},z_{4},z^{2}_{3})$ and 
\[
\beta \equiv z^{2}_{1}z_{4}+c_{1}z^{2}_{3}+c_{2}z_{2}z_{3}+c_{3}z^{2}_{2} \mod (z_{4},z^{2}_{3},z_{2}z_{3},z^{2}_{2})\cdot I_{C}
\]
at $Q$ for some $c_{1}$, $c_{2}$, $c_{3} \in {\CC}$. 
We note that 
$\beta \equiv z^{2}_{1}z_{4}+c_{1}z^{2}_{3}\mod 
J^\sharp I^\sharp$. 
The following Lemma \xref{(2.13.5)} 
corresponds to Lemma \xref{(2.12.5)}. The fact 
that $(c_{1},c_{2},c_{3}) \neq (0,0,0)$ and the 
assertion on the type of $Q$ 
follows from the classification of 
terminal 3-fold singularities \cite[(6.1)]{Reid-YPG1987}. 
The assertion that 
$c_{1} \neq0$ for the case $m\ge 5$ is proved in the same way as 
Lemma \xref{(2.12.6)}. Thus 
Lemma \xref{(2.13.4)} is proved.
\end{proof}

\begin{lemma}[{\cite[(2.13.5)]{Kollar-Mori-1992}}]
\label{(2.13.5)}
Under the notation of the previous proof,
assume that $c_{1} \neq0$ when $m\ge 5$, or $(c_{1},c_{2},c_{3}) 
\neq (0,0,0)$ when $m=3$. Then for a general member $E_{X}$ of $|-K_{X}|$, $\Delta (E_{X}, C)$ is 
\[
\begin{array}{ll}
&\circ 
\\ 
&\mid 
\\ 
\underbrace{\circ -\cdots - \circ}_{m-1} - \bullet - \underbrace{\circ -\cdots - \circ}_{2k-3} -&\circ - \circ, 
\end{array}
\]
where $k (\ge 2)$ is the axial multiplicity of $(X,Q)$. 
\end{lemma}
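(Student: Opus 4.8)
The plan is to follow the proof of Lemma~\ref{(2.12.5)} almost verbatim, the one genuinely new ingredient being the local analysis of $E_X$ near the index-$2$ point $Q$. First I would record the $\ell$-structures supplied by the proof of Lemma~\ref{(2.13.4)}: the $\ell$-splitting $\gr^1_C\OOO\simeq(-1+\frac{m-1}2P^\sharp+Q^\sharp)\toplus(-1+P^\sharp+Q^\sharp)$, the $C$-laminal ideal $J$ of width~$2$ with $J/F^2_C\OOO=(-1+\frac{m-1}2P^\sharp+Q^\sharp)$, and the normal form at $Q$ with leading part $z_1^2z_4+c_1z_3^2$ (together with its $m=3$ refinement). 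Tensoring with $\gr^0_C\omega$ one gets, exactly as in Lemma~\ref{(2.12.5)}, that $\gr^1_C(\omega {^*})\simeq(0)\toplus(-1+\frac{m+3}2P^\sharp)$ when $m\ge5$ and $\simeq(0)\toplus(0)$ when $m=3$, and that $H^0(\OOO_X(-K_X))=H^0(\gr^2_C(\omega {^*},J))$ (resp.\ $=H^0(\gr^1_C(\omega {^*}))$). Taking $D=\{y_1+h=0\}/\muu_m\in|-2K_X|$ with $h$ general and applying Corollary~\ref{surj_on_omega} with $n=2$ as in~\ref{ppar-53}, one obtains $H^0(\OOO_X(-K_X))\twoheadrightarrow\omega_D\mod (y_2,y_3)^2\omega_{D^\sharp}$; hence a general section $s$ induces $(y_2+\cdots)/\Omega$ near $P$ and $(z_2+\cdots)/\Omega$ near $Q$, up to units, so the induced section $\ov s$ of $\gr^1_C(\omega {^*})$ is nowhere vanishing. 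Consequently $E_X:=\{s=0\}\supset C$ is smooth along $C\setminus\{P,Q\}$ and, up to a unit, is cut out near $P$ (resp.\ near $Q$) by a function $\equiv y_2$ (resp.\ $\equiv z_2$) modulo $F^2_C\OOO$.

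Next I would identify the two singularities of $E_X$. Since $P$ is ordinary of index $m$, eliminating $y_2$ gives $(E_X,P)\simeq(y_1,y_3)/\muu_m(1,-1)$, an $A_{m-1}$-point, with $C$ the image of the $y_1$-axis. At $Q$, eliminating $z_2$ and substituting into $\beta$ yields $(E_X,Q)\simeq(z_1,z_3,z_4;\ov\beta)/\muu_2(1,1,0;0)$, where $\ov\beta$ has leading part $z_1^2z_4+c_1z_3^2$; for $m=3$ the coefficient of $z_3^2$ is replaced by a general value $c_3t^2+c_2t+c_1$, which is nonzero by the hypothesis $(c_1,c_2,c_3)\ne(0,0,0)$. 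Since $\ov\beta$ exhibits $(E_X,Q)$ as the general elephant of the terminal germ $(X,Q)$ --- of type $cA/2$, $cAx/2$ or $cD/2$ with axial multiplicity $k$ --- the classification of these germs and of their anti-canonical Du~Val members (Reid, \cite[(6.1)]{Reid-YPG1987}; cf.\ \cite[(2.13.5)]{Kollar-Mori-1992}) shows that $(E_X,Q)$ is $D_{2k}$ ($=A_1+A_1$ when $k=1$, but here $k\ge2$), with $C$ the image of the $z_1$-axis. Hence $\Sing E_X=A_{m-1}+D_{2k}$, and $E_Z$ has the same (Du~Val) singularities by $(K_X\cdot C)=0$.

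Finally I would compute $\Delta(E_X,C)=\Delta(E_Z,o')$ by a local resolution (toric) computation, in the spirit of the Computations of \S\ref{(2.10)} and \S\ref{(2.11)}. Near $P$, with $C$ the image of a coordinate axis of $\CC^2/\muu_m(1,-1)$, the minimal resolution contributes the chain $\underbrace{\circ-\cdots-\circ}_{m-1}$ with $\tilde{C}$ (the $\bullet$) meeting one end; near $Q$, with $C$ the image of the $z_1$-axis in the $D_{2k}$-point, $\tilde{C}$ meets the end of the long arm of the $D_{2k}$-configuration, which contributes $\underbrace{\circ-\cdots-\circ}_{2k-3}-\circ-\circ$ together with one forked $\circ$. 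Since $E_X$ is smooth along $C$ away from $P$ and $Q$, gluing the two local pictures along $\tilde{C}$ gives exactly the stated graph, which completes the proof. The main obstacle is precisely the analysis at $Q$: extracting the axial multiplicity $k$ from $\beta$ in each of the three cases $cA/2$, $cAx/2$, $cD/2$, checking that $(E_X,Q)$ is $D_{2k}$, and locating $\tilde{C}$ inside the $D_{2k}$-configuration; constructing $E_X$ and handling the point $P$ just repeat earlier arguments.
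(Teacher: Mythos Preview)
Your proposal is correct and follows the paper's approach closely; the paper's own proof is literally a one-sentence reference back to Lemma~\ref{(2.12.5)} plus the new local analysis at $Q$, exactly as you outline.

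The one place where you diverge is in handling $(E_X,Q)$. You invoke the general-elephant classification for $cA/2$, $cAx/2$, $cD/2$ germs to conclude $(E_X,Q)$ is $D_{2k}$, whereas the paper proceeds by direct normal-form reduction: it observes that after eliminating $z_2$ one has $\ov\beta \equiv z_1^2z_4+z_3^2 \mod (z_4,z_3^2)(z_4,z_3)$ with $\ord\ov\beta(0,0,z_4)=k<\infty$, normalizes to $\ov\beta=z_1^2z_4+z_3^2+z_4^k$ via formal $\muu_2$-automorphisms, and then appeals to the explicit Computation~\ref{(2.13.6)} (which also pins down where $\tilde C$ sits in the $D_{2k}$ diagram). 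Your shortcut is a little dangerous as stated, because $E_X$ is not \emph{a priori} a general member of $|-K_{(X,Q)}|$ at the germ level: its local equation is forced to lie in $I_{C^\sharp}=(z_2,z_3,z_4)$, so you cannot simply quote the generic elephant without checking that this constraint does not spoil the Du~Val type. In practice it does not --- the leading part $z_1^2z_4+z_3^2$ together with $\ord_{z_4}=k$ already forces $D_{2k}$ --- but that is precisely the content of the paper's normal-form step. Since you explicitly flag this as the main obstacle and list what must be verified, the plan is sound; just be aware that carrying it out amounts to redoing Computation~\ref{(2.13.6)} rather than citing a classification result.
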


\begin{proof}
The only difference from Lemma \xref{(2.12.5)} 
is the analysis of the 
singularity $(E_{X},Q) \simeq (z_{1},z_{3},z_{4};\ov{\beta})/\muu_{2}(1,1,0;0)$, 
where $\ov{\beta}$ satisfies $\ov{\beta} 
\equiv z^{2}_{1}z_{4}+z^{2}_{3} \mod (z_{4},z^{2}_{3})(z_{4},z_{3})$ and $\ord \ov{\beta}(0,0,z_{4})=k < \infty$. It is 
easy to see that $\ov{\beta}=z^{2}_{1}z_{4}+z^{2}_{3}+z^{k}_{4}$ modulo formal $\muu_{m}$-automorphisms 
in $(z_{1},z_{3},z_{4})$. Thus it is reduced to the following explicit 
computation (cf. \cite[(4.10)]{Reid-YPG1987}). 
\end{proof}

\begin{cmp}[{\cite[(2.13.6)]{Kollar-Mori-1992}}]
\label{(2.13.6)}
Let 
\[
(E,Q)=(z_{1},z_{3},z_{4}; z^{2}_{1}z_{4}+z^{2}_{3}+z^{k}_{4})/\muu_{2}(1,1,0;0)
\]
and $C=z_{1}$-axis$/\muu_{2}$, 
where $k\ge 2$. Then $(E,Q)$ is $D_{2k}$ and $\Delta (E, C)$ 
is 
\[
\begin{array}{ll}
&\circ \\ 
&\mid \\ 
\bullet - \underbrace{\circ -\cdots - \circ}_{2k-3} -&\circ - \circ. 
\end{array}
\]
\end{cmp} 

\begin{say}[Cf. {\cite[(2.13.7)]{Kollar-Mori-1992}}]
\label{(2.13.7)} 
In the rest of this chapter, we assume the case 
\xref{(2.13.3.1)} unless otherwise mentioned. 

We choose an $\ell$-splitting 
$\gr^{1}_{C}\OOO \simeq \LLL\toplus \MMM$ as in 
\cite[(2.8)]{Kollar-Mori-1992} (see \cite[9.1.7]{Mori-1988}) 
such that $\deg \LLL=0$ and 
$\deg \MMM=-1$, see \eqref{(2.13.3.1)}. 
Let $J$ be the $C$-laminal ideal of 
width 2 such that $J/F^{2}_{C}\OOO=\LLL$. 
For an $\ell$-invertible sheaf $F$ with an 
$\ell$-free $\ell$-basis $f$ at a point $T$ of 
index $n$, we can give an 
equivalent definition of $\qldeg(F,T) \in [0,n)$ as 
$\qldeg(F,T) \equiv - \wt f \mod (n)$. 
(This is because $(C^\sharp, P^\sharp)$ and 
$(C^\sharp, Q^\sharp)$ are smooth.) 
\end{say}

\begin{lemma}[{\cite[(2.13.8)]{Kollar-Mori-1992}}]
\label{(2.13.8)}
$\qldeg(\MMM,Q)=1$. 
\end{lemma}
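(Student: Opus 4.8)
The plan is to reduce the statement to a local computation at $Q$ and then pin down a single weight. Since we are in case~\eqref{(2.13.3.1)}, by Lemma~\ref{(2.13.1)} and $\ell(Q)\le 1$ we may take $(X,Q)=(z_1,z_2,z_3,z_4;\beta)/\muu_2(1,1,1,0;0)$ with $\beta\equiv z_1z_3\bmod (z_2,z_3,z_4)^2$ and $C^\sharp$ the $z_1$-axis. Eliminating $z_3$, the classes of $z_2$ and $z_4$ form an $\ell$-free $\ell$-basis of $\gr^1_C\OOO$ at $Q$, of weights $1$ and $0$ modulo $2$. Near $Q$ the splitting $\gr^1_C\OOO\simeq\LLL\toplus\MMM$ is into $\ell$-invertible summands, so the $\ell$-bases of $\LLL$ and $\MMM$ at $Q$ are, up to units, $z_2$ and $z_4$ in one of the two orders; hence $\{\qldeg(\LLL,Q),\qldeg(\MMM,Q)\}=\{0,1\}$, and it suffices to prove $\qldeg(\LLL,Q)=0$, i.e. that the width-$2$ laminal ideal $J$ satisfies $J^\sharp=(z_4,z_2^2)$ near $Q$ (the laminal direction $\LLL$ being $\ell$-generated by the even-weight coordinate $z_4$).

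To rule out the alternative, I would assume $\qldeg(\LLL,Q)=1$, so $J^\sharp=(z_2,z_4^2)$ near $Q$, and run the cohomology-vanishing argument used in the proofs of Lemmas~\ref{(2.12.2)} and~\ref{(2.12.6)}. Combining this assumption with $\deg\LLL=0$, $\deg\MMM=-1$ and the $\ell$-data at $P$ from Lemma~\ref{(2.13.1)} (which fixes $\qldeg(\LLL,P)$, hence $\ql_C(\LLL)$ up to its integral part), one forms the laminal filtration $F^\bullet(\OOO,J)$, computes the $\ell$-isomorphism types of the graded pieces $\gr^{n}(\OOO,J)$ and $\gr^{n}(\omega,J)$ using $\gr^0_C\omega$ and the twist formulas of \cite[(8.6),(8.12)]{Mori-1988}, and concludes from the exact sequences \eqref{eq-omegaJF} that $H^1(\omega/F^{k}(\omega,J))\neq 0$ for a small $k$. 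By \cite[(4.4)]{Mori-Prokhorov-2008} this forces $V:=\Spec_X\OOO_X/F^{k}(\OOO,J)\supset f^{-1}(o)$, whence $2=(-K_X\cdot f^{-1}(o))\le(-K_X\cdot V)$; a colength computation as at the end of the proof of Lemma~\ref{(2.12.2)} gives $(-K_X\cdot V)<2$, a contradiction. (Equivalently, this is the numerical incompatibility of $\qldeg(\LLL,Q)=1$, the value of $\qldeg(\LLL,P)$, and $\deg\LLL=0$ under the identity $\deg F=\TL(\ql_C(F))$.) Once $\qldeg(\LLL,Q)=0$, the reduction above yields $\qldeg(\MMM,Q)=1$.

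The main obstacle is the orbifold bookkeeping at $Q$: because $(X,Q)$ is $cA/2$, its canonical cover is singular, unlike the ordinary index-$2$ points of \S\ref{(2.12)}, so the twists appearing at $Q$ in the pieces $\gr^{n,1}(\OOO,J)$ and $\gr^{n}(\omega,J)$ — and the $\TL$-correction at $Q$ — are not those used in \S\ref{(2.12)}; they have to be computed carefully from $\ell(Q)=1$, $i_Q(1)=1$ and $\wt\Omega\bmod 2$ at $Q$. One must also check that $J$, with the claimed $J^\sharp$ at $P$, $Q$ and at a general point of $C$, really is a $\muu$-equivariant width-$2$ laminal ideal, so that the filtration argument is legitimate in the first place.
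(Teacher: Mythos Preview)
Your reduction at $Q$ is essentially fine (with the caveat that when $\ell(Q)=0$ the $\ell$-basis is $\{z_2,z_3\}$, both of weight $1$, so $\qldeg(\MMM,Q)=1$ is automatic; only the case $\ell(Q)=1$ requires work). The real problem is your contradiction step. You write that Lemma~\ref{(2.13.1)} ``fixes $\qldeg(\LLL,P)$''. It does not. At $P$ the generators $y_2,y_3,y_4$ of $\gr^1_C\OOO^\sharp$ have weights $(m+1)/2,\,-1,\,0$ modulo $m$, so $\qldeg(\MMM,P)\in\{0,1,(m-1)/2\}$; which value actually occurs is precisely what is sorted out \emph{later} in Lemmas~\ref{(2.13.9)} and~\ref{(2.13.10)}. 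Your laminal-filtration computation of $\gr^{n}(\omega,J)$ therefore cannot proceed as stated, and even granting a case split on $\qldeg(\MMM,P)$ you would still need monomializability of $I\supset J$ at $P$ and $Q$ (which in \S\ref{(2.12)} required deformation arguments), none of which you supply.

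The paper's argument avoids all of this: it never touches the laminal ideal $J$. Assuming $\qldeg(\MMM,Q)=0$, one has $\MMM\simeq(-1+iP^\sharp)$ with $i\in\{0,1,(m-1)/2\}$, hence
\[
\MMM\totimes\gr^0_C\omega\simeq\bigl(-2+\tfrac{m-1}{2}+i\,P^\sharp+Q^\sharp\bigr),
\]
and the single inequality $\tfrac{m-1}{2}+i\le m-1<m$ forces $H^1(\gr^1_C\omega)\neq 0$. Since $H^1(\gr^0_C\omega)=0$, this contradicts $H^1(\omega/F^2_C\omega)=0$, the latter coming from \cite[(4.4)]{Mori-Prokhorov-2008} and the colength bound $2\le 3/(2m)$. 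In short, the paper uses only the standard filtration $F^\bullet_C$ and a uniform bound over all possible $\qldeg(\MMM,P)$; your proposal substitutes a heavier laminal machinery and, crucially, assumes knowledge of $\qldeg(\LLL,P)$ that is not yet available.
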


\begin{proof}
We assume $\qldeg(\MMM,Q)=0$. 
Then $\MMM \simeq (-1+iP^\sharp)$ for
$i=0$, $1$ 
or $(m-1)/2$ since $y_{2}$, $y_{3}$ and $y_{4}$ 
generate
$\gr^{1}_{C}\OOO^{\sharp}$ 
at $P^{\sharp}$. 
It follows from 
$\ql_{C}(\gr^{0}_{C}\omega)=-1+\frac{m-1}{2}P^\sharp+Q^\sharp$, 
that
\[
\textstyle
\gr^{1}_{C}\omega \simeq \gr^{1}_{C}{\OOO
\totimes}\gr^{0}_{C}\omega 
\simeq \LLL\totimes\gr^{0}_{C}\omega 
\toplus 
\left(-2+\left(\frac{m-1}2+i\right)P^\sharp+Q^\sharp\right). 
\]
Since $(m-1)/2+i\le m-1 < m$, we have 
$H^{1}(\gr^{1}_{C}\omega) \neq 0$. This is a contradiction 
to $H^{1}(\omega /F^{2}_{C}\omega) =0$ because of $H^{1}(\gr^{0}_{C}\omega)=0$.
Indeed, otherwise by \cite[(4.4)]{Mori-Prokhorov-2008} we have 
$f^{-1}(o) \subset \Spec \OOO_X/F^2_C \OOO$ 
and $2\le 3/(2m)$, which is a contradiction.
\end{proof} 

\begin{remark}[{\cite[(2.13.8.1)]{Kollar-Mori-1992}}]
\label{(2.13.8.1)}
For comparison with \cite[(9)]{Mori-1988}, it might be 
worthwhile to mention\footnote{$q(-)$ in \cite[2,(13.8.1)]{Kollar-Mori-1992} was
$\qldeg(\LLL,-)$.}
\[
\begin{array}{ll}
\qldeg(\MMM,Q)=1 &\text{iff}\ \ell (Q)+\qldeg(\LLL,Q)=1, \\[10pt] 
\qldeg(\MMM,P)=\frac{m-1}2 &\text{iff}\ \ell (P)+\qldeg(\LLL,P)=1. 
\end{array}
\]
\end{remark} 

\begin{lemma}[{Corresponds to but different from \cite[(2.13.9)]{Kollar-Mori-1992}}]
\label{(2.13.9)}
$\qldeg(\MMM,P) \neq (m-1)/2$
\end{lemma}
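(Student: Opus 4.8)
The plan is to argue by contradiction, exactly parallel to the proof of Lemma~\ref{(2.13.8)}, by assuming $\qldeg(\MMM,P)=(m-1)/2$ and deriving a vanishing-type contradiction via $H^1(\omega/F^2_C\omega)=0$ and \cite[(4.4)]{Mori-Prokhorov-2008}. First I would record what the hypothesis means: by the equivalent description of $\qldeg$ in \ref{(2.13.7)} via weights of an $\ell$-free $\ell$-basis, $\qldeg(\MMM,P)=(m-1)/2$ says that an $\ell$-basis of $\MMM$ at $P$ has weight $\equiv -(m-1)/2 \equiv (m+1)/2 \pmod m$, which (given that $y_2,y_3,y_4$ with weights $(m+1)/2,-1,0$ generate $\gr^1_C\OOO^\sharp$ at $P^\sharp$) forces $\MMM\simeq(-1+(m+1)/2\,P^\sharp+jQ^\sharp)$ for a suitable $j\in\{0,1\}$ determined by $\deg\MMM=-1$ together with Lemma~\ref{(2.13.8)} ($\qldeg(\MMM,Q)=1$), so in fact $\MMM\simeq(-1+\frac{m+1}{2}P^\sharp+Q^\sharp)$, and correspondingly $\LLL\simeq(-1+0\cdot P^\sharp+Q^\sharp)$ or $(\frac{m-1}{2}P^\sharp+Q^\sharp)$-type using $\ql_C(\gr^1_C\OOO)$. (The precise bookkeeping of the $Q^\sharp$-coefficients is routine from $\ql_C(\gr^1_C\OOO)=\ql_C(\LLL)+\ql_C(\MMM)$ and $\qldeg(-,Q)$.)

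Next I would tensor with $\gr^0_C\omega$, using $\ql_C(\gr^0_C\omega)=-1+\frac{m-1}{2}P^\sharp+Q^\sharp$, to get an $\ell$-splitting
\[
\textstyle
\gr^1_C\omega\simeq \bigl(\LLL\totimes\gr^0_C\omega\bigr)\toplus\Bigl(-2+\bigl(\tfrac{m-1}{2}+\tfrac{m+1}{2}\bigr)P^\sharp+2Q^\sharp\Bigr)
=\bigl(\LLL\totimes\gr^0_C\omega\bigr)\toplus\bigl(-2+mP^\sharp+2Q^\sharp\bigr).
\]
Here the key point is that $mP^\sharp$ contributes trivially to $\qldeg$ at $P$ (index $m$) and $2Q^\sharp$ trivially at $Q$ (index $2$), so the second summand has $\ql_C$-degree computed as $\TL(-2+mP^\sharp+2Q^\sharp)=-2$ (or more precisely its degree is $\le -2$), hence $H^1$ of that summand is nonzero. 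This gives $H^1(\gr^1_C\omega)\neq0$. Since $H^1(\gr^0_C\omega)=0$ (as $\gr^0_C\omega\simeq(-1+\frac{m-1}2P^\sharp+Q^\sharp)$ has nonnegative $\ql_C$-degree, being $\ge -1$), the exact sequence $0\to\gr^1_C\omega\to\omega/F^2_C\omega\to\omega/F^1_C\omega=\gr^0_C\omega\to0$ forces $H^1(\omega/F^2_C\omega)\neq0$.

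Then I would invoke \cite[(4.4)]{Mori-Prokhorov-2008}: nonvanishing of $H^1(\omega/F^2_C\omega)$ implies $f^{-1}(o)\subset\Spec\OOO_X/F^2_C\OOO$, so $2=(-K_X\cdot f^{-1}(o))\le(-K_X\cdot\Spec\OOO_X/F^2_C\OOO)$, and the right side is computed locally near the singular points to be $\le 3/(2m)$ (as at the end of Lemma~\ref{(2.13.8)}, since $F^2_C\OOO=I_C^{(2)}$ is supported with small colength, the only contribution being $1/(2m)$ from lengths at $P$ and a half-integer at $Q$, totalling $\le 3/(2m)$); since $m\ge3$ this contradicts $2\le 3/(2m)$. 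The main obstacle I anticipate is pinning down the exact $Q^\sharp$-coefficients in $\LLL$ and $\MMM$ (and thus making sure the second summand of $\gr^1_C\omega$ really has $\ql_C$-degree $\le-2$ rather than $-1$): one has to use both $\qldeg(\MMM,Q)=1$ from Lemma~\ref{(2.13.8)} and the constraint from $\ql_C(\gr^1_C\OOO)$, and verify that the $cA/2$-structure at $Q$ (case \ref{(2.13.3.1)}, where $\ell(Q)\le1$ and $i_Q(1)=1$) forces the coefficients to combine as claimed — but this is exactly the type of $\ell$-structure bookkeeping carried out in Lemma~\ref{(2.13.8)}, so it should go through with the same tools (\cite[(2.6)]{Kollar-Mori-1992}, \cite[(8.9.1)]{Mori-1988}).
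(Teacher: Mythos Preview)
Your approach has a genuine gap: the cohomological argument parallel to Lemma~\ref{(2.13.8)} does \emph{not} go through here, and this is precisely why the paper's proof takes a completely different route.

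The error is in your identification of $\MMM$. By the convention in \ref{(2.13.7)}, the coefficient of $P^\sharp$ in the $\ql$-notation equals $\qldeg(\MMM,P)$, not the weight of the basis. So the hypothesis $\qldeg(\MMM,P)=(m-1)/2$ together with Lemma~\ref{(2.13.8)} and $\deg\MMM=-1$ gives
\[
\textstyle \MMM\simeq\bigl(-1+\tfrac{m-1}{2}P^\sharp+Q^\sharp\bigr)=\gr^0_C\omega,
\]
not $(-1+\tfrac{m+1}{2}P^\sharp+Q^\sharp)$. Consequently
\[
\textstyle \MMM\totimes\gr^0_C\omega\simeq\bigl(-2+(m-1)P^\sharp+2Q^\sharp\bigr)\simeq\bigl(-1+(m-1)P^\sharp\bigr),
\]
because the $2Q^\sharp$ produces a carry of $+1$ in degree (index of $Q$ is $2$), while $(m-1)P^\sharp$ does \emph{not} carry (since $m-1<m$). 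This summand therefore has degree $-1$, so $H^1$ of it vanishes, and you get no contradiction from $H^1(\gr^1_C\omega)$. The mechanism that made Lemma~\ref{(2.13.8)} work---namely that $\qldeg(\MMM,Q)=0$ prevented any carry at $Q$---is exactly what fails once Lemma~\ref{(2.13.8)} is already in force.

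The paper's argument is entirely different and geometric: assuming $\qldeg(\MMM,P)=(m-1)/2$, one has $\MMM\simeq\gr^0_C\omega$, and then $\gr^1_C\omega^*\simeq(\LLL\totimes(\gr^0_C\omega)^{\totimes(-1)})\toplus(0)$. Using Corollary~\ref{surj_on_omega} with $D=\{y_1=0\}/\muu_m$ as in \ref{ppar-53}, a general $s\in H^0(F^1_C(\omega^*))$ projects nontrivially to the $(0)$ summand (via $y_2/\Omega$), hence $\bar s$ is nowhere vanishing. The resulting $E_X=\{s=0\}$ is smooth on $C\setminus\{P,Q\}$, and the local analysis (as in \cite[(9.9.3)]{Mori-1988}) shows $\Delta(E_Z,o')$ is a chain $\circ-\cdots-\circ-\bullet-\circ-\cdots-\circ$, i.e.\ of type $A$. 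The contradiction then comes from \cite[(11.2)]{Mori-Prokhorov-2008}: for a $\QQ$-conic bundle germ over a smooth base with such a configuration, $X$ has index $2$, which is incompatible with $P$ having odd index $m\ge3$.
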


\begin{proof}
We assume $\qldeg(\MMM,P)=(m-1)/2$ to the contrary.
There is 
an $\ell$-isomorphism $\MMM \simeq \gr^{0}_{C}\omega$. We may assume that $y_{2}$ is an $\ell$-free $\ell$-basis of $\MMM$ at $P$. Let $D=\{y_{1}=0\}/\muu_{m}$. 
It is easy to see $D \in |-2K_{X}|$ by $(D\cdot C)=
1/m$. By 
$H^{0}(\OOO(-K_{X}))=H^{0}(F^{1}_{C}(\omega {^*}))$, its general section $s$ 
induces a section $\ov{s}$ of 
$\gr^{1}_{C}\omega {^*} 
\simeq \LLL\totimes(\gr^{0}_{C}\omega)^{\totimes(-1)}\toplus (0)$. 
Similar to arguments in \ref{ppar-53} one can see 
that the projection 
of $\ov{s}$ to $(0)$ is non-zero
because $y_{2}/\Omega$ is an $\ell$-free $\ell$-basis of 
$(0)$ at $P$ and $s$ induces an element of the form $y_{2}/\Omega+\cdots$ up to 
units, where $\Omega$ is an $\ell$-free $\ell$-basis of $\gr^{0}_{C}\omega$ at $P$. Thus $\ov{s}$ is 
nowhere vanishing, whence $E_{X}=\{s=0\}$ is smooth outside of 
$P$ and $Q$. 
The analysis of $(E_{X},P)$ and $(E_{X},Q)$ is the same as \cite[(9.9.3)]{Mori-1988}. Hence $(E_Z,o')$ has a configuration:
\[
\circ -\cdots - \circ - \bullet - \circ -\cdots - \circ.
\]
The difference from \cite{Kollar-Mori-1992} is that this implies
that $X$ is of index $2$ by \cite[(11.2)]{Mori-Prokhorov-2008} 
in our $\QQ$-conic bundle germ case where
the base is smooth. Since the index $m$ of $P$ is odd 
and $>1$, this is a contradiction and we are done.
\end{proof} 

\begin{lemma}[{\cite[(2.13.10)]{Kollar-Mori-1992}}]
\label{(2.13.10)}
The point $P$ is 
ordinary and $m\ge 5$. After changing 
coordinates, we may assume
\[
\begin{array}{ll}
(X,P)&=(y_{1},y_{2},y_{3})/\muu_{m}(1,(m+1)/2,-1)\supset (C,P)=y_{1}\text{-axis}/\muu_{m},\\[10pt] 
(X,Q)&=(z_{1},z_{2},z_{3},z_{4};\beta)/\muu_{2}(1,1,1,0;0)\supset (C,Q)=z_{1}\text{-axis}/\muu_{2}; 
\end{array}
\]
$y_{2}$ and $y_{3}$ are $\ell$-free $\ell$-bases of $\LLL$ and $\MMM$ at $P$ respectively; $z_{3}$ 
\textup(resp. $z_{4}$\textup) 
and $z_{2}$ are $\ell$-free $\ell$-bases of 
$\LLL$ and $\MMM$ at $Q$ 
respectively, 
\begin{equation}
\label{eq-IAIA-LM}
\begin{array}{ll}
\LLL&=(\frac{m-1}{2}P^\sharp+Q^\sharp)\ 
(\text{\rm resp.}\ \LLL = (\frac{m-1}{2}P^\sharp)), \\[10pt] 
\MMM&=(-1+P^\sharp+Q^\sharp), 
\end{array}
\end{equation}
$I\supset J$ has a $(1,2)$-monomializing 
$\ell$-basis $(y_{3},y_{2})$ at $P$, 
$I\supset J$ has a $(1,2)$-monomializing 
$\ell$-basis $(z_{2},z_{3})$ 
\textup(resp. a $(1,2,2)$-monomializing $\ell$-basis $(z_{2,}z_{4},z_{3})$\textup) at 
$Q$, $\beta=z_{4}$ \textup(resp. 
$\beta \equiv z_{1}z_{3}+z^{2}_{2} \mod (z^{2}_{2},z_{3},z_{4})(z_{2},z_{3},z_{4})$\textup) 
if $k=1$ \textup(resp. $k\ge 2$\textup)\textup, where 
$k$ is the axial multiplicity of $Q$. Furthermore, 
there is an $\ell$-splitting 
\begin{equation}
\label{eq-gr2OJ712}
\textstyle
\gr^{2}(\OOO,J) \simeq 
\left(2P^\sharp\right) 
\toplus\left(-1+\frac{m-1}{2}P^\sharp+Q^\sharp\right). 
\end{equation}
\end{lemma}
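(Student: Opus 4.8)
The plan is to first pin down the $\ell$-invertible sheaves $\gr^0_C\omega$, $\LLL$, $\MMM$, then to prove that $P$ is ordinary (this is the crux, and it simultaneously gives $m\ge 5$), and finally to normalize coordinates at $P$ and $Q$ and read off the splitting \eqref{eq-gr2OJ712}.

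First I would record that $\ql_C(\gr^0_C\omega)=-1+\frac{m-1}2P^\sharp+Q^\sharp$, hence $\gr^0_C\omega\simeq\bigl(-1+\frac{m-1}2P^\sharp+Q^\sharp\bigr)$, exactly as in the proof of Lemma \xref{(2.13.9)}. Since $\LLL\totimes\MMM=\wedge^2\gr^1_C\OOO$ and $i_P(1)=i_Q(1)=1$ (Lemma \xref{(2.13.2)} and Case \xref{(2.13.3.1)}), the product $\LLL\totimes\MMM$ is determined by $\gr^0_C\omega$ and $\Omega^1_C$; together with $\deg\LLL=0$, $\deg\MMM=-1$ this links the values of $\qldeg$ of $\LLL$ and $\MMM$ at the two points. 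Now Lemma \xref{(2.13.8)} gives $\qldeg(\MMM,Q)=1$, while Lemma \xref{(2.13.9)} and Remark \xref{(2.13.8.1)} give $\qldeg(\MMM,P)\neq(m-1)/2$ and $\ell(P)+\qldeg(\LLL,P)\neq 1$. If $P$ is ordinary, the only values $\qldeg$ at $P$ can take on an $\ell$-invertible direct summand of $\gr^1_C\OOO$ are $(m-1)/2$ (realized by $y_2$) and $1$ (realized by $y_3$), so necessarily $\qldeg(\MMM,P)=1$ and $\qldeg(\LLL,P)=(m-1)/2$; since $1\neq(m-1)/2$ this forces $m\ge 5$, and we obtain $\MMM\simeq(-1+P^\sharp+Q^\sharp)$ together with one of the two possibilities \eqref{eq-IAIA-LM} for $\LLL$ (which one being decided by the type of $Q$ below). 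Thus everything reduces to the claim that $P$ is ordinary.

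The hard part is to prove $P$ ordinary, and I would do this as in the proof of Lemma \xref{(2.12.2)}. Suppose $P$ is not ordinary; then, after improving coordinates, $\alpha\equiv y_1y_3\mod(y_2,y_3,y_4)^2$. Take the $C$-laminal ideal $J$ of width $2$ with $J/F^2_C\OOO=\LLL$, arrange a $(1,2,2)$-monomializing $\ell$-basis of $I\supset J$ at $P$ (deforming $(X,C)$ as in the proofs of Lemmas \xref{(2.12.2)}, \xref{(2.13.3)} if this is not directly available, cf.\ Remark \xref{rem-def}) and a $(1,2)$- or $(1,2,2)$-monomializing $\ell$-basis at the ordinary or $cA/2$ point $Q$; compute the graded pieces $\gr^n(\OOO,J)$ and $\gr^n(\omega,J)$ via \cite[(8.6), (8.12)]{Mori-1988} and twisting by $\gr^0_C\omega$; and run the exact sequences
\[
0\to\gr^n(\omega,J)\to\omega_X/F^{n+1}(\omega,J)\to\omega_X/F^n(\omega,J)\to 0
\]
to conclude $H^1\bigl(\omega_X/F^N(\omega,J)\bigr)\neq 0$ for a small $N$. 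Then \cite[(4.4)]{Mori-Prokhorov-2008} forces $V:=\Spec_X\OOO_X/F^N(\OOO,J)\supset f^{-1}(o)$, so $2=(-K_X\cdot f^{-1}(o))\le(-K_X\cdot V)$, and the right-hand side equals $\frac{1}{2m}$ times a small colength at a general point of $C$, which is incompatible with $m\ge 3$ --- exactly as in the proofs of Lemmas \xref{(2.12.2)}, \xref{(2.12.3)}, \xref{(2.12.6)}. Hence $P$ is ordinary and $m\ge 5$. The only new bookkeeping compared with \S\,\xref{(2.12)} is that the third point $R$ is absent and $Q$ carries the $\ell$-structure found above; I expect this step to be the main obstacle.

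With $P$ ordinary I would fix $(X,P)=(y_1,y_2,y_3)/\muu_m(1,\frac{m+1}2,-1)$, so that $y_2$, $y_3$ are $\ell$-bases of $\LLL$, $\MMM$ at $P$ and $(y_3,y_2)$ is a $(1,2)$-monomializing $\ell$-basis there. At $Q$ I would use the global identifications of $\LLL$, $\MMM$ to choose coordinates $(z_1,z_2,z_3,z_4;\beta)/\muu_2(1,1,1,0;0)$ with $z_2$ an $\ell$-basis of $\MMM$ and $z_3$ (resp.\ $z_4$) an $\ell$-basis of $\LLL$; since $\ell(Q)\in\{0,1\}$ (Case \xref{(2.13.3.1)}), the classification of terminal, hence $cA/2$, singularities \cite[(6.1)]{Reid-YPG1987} leaves exactly two cases. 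Either $\ell(Q)=0$, $k=1$, $\beta=z_4$ (so $Q$ is ordinary, $\LLL=(\frac{m-1}2P^\sharp+Q^\sharp)$, and $(z_2,z_3)$ is a $(1,2)$-monomializing $\ell$-basis); or $\ell(Q)=1$, $k\ge 2$, and, after replacing $z_3$ by an element $\equiv z_3\mod(z_2,z_4)^2$, $\beta\equiv z_1z_3+z_2^2\mod(z_2^2,z_3,z_4)(z_2,z_3,z_4)$ (so $\LLL=(\frac{m-1}2P^\sharp)$ and $(z_2,z_4,z_3)$ is a $(1,2,2)$-monomializing $\ell$-basis); a deformation argument as in Lemma \xref{(2.13.3)} disposes of any residual ambiguity in the monomializability at $Q$. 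Finally, from these monomializing $\ell$-bases one has $\gr^1(\OOO,J)\simeq\MMM$ and, as in the proof of Lemma \xref{(2.12.2)}, an $\ell$-exact sequence $0\to\gr^{2,1}(\OOO,J)\to\gr^2(\OOO,J)\to\gr^{2,0}(\OOO,J)\to 0$ whose subquotients compute to $(2P^\sharp)$ and $\bigl(-1+\frac{m-1}2P^\sharp+Q^\sharp\bigr)$; since the relevant difference of quasi-$\ell$-degrees is $\ge -1$ for $m\ge 5$, \cite[(2.6)]{Kollar-Mori-1992} splits the sequence and yields \eqref{eq-gr2OJ712}, which completes the argument.
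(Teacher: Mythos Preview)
Your overall plan matches the paper's closely: prove $P$ ordinary by an $H^1$/intersection-number contradiction (the paper's Step \xref{(2.13.10.1)}), deduce $m\ge 5$ and the form of $\LLL,\MMM$ (Steps \xref{(2.13.10.2)}--\xref{(2.13.10.3)}), then determine the $\ell$-splitting of $\gr^2(\OOO,J)$ (Step \xref{(2.13.10.4)}). The first two parts are fine. One small point: the normal form $\beta\equiv z_1z_3+z_2^2$ when $k\ge 2$ (i.e.\ that the coefficient $c$ of $z_2^2$ is nonzero) is not a consequence of the terminal classification alone; in the paper it is established by a direct $H^1(\omega/F^3(\omega,J))\neq 0$ argument, not by deformation.

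The real gap is your last paragraph. You assert that the subquotients of
\[
0\to\gr^{2,1}(\OOO,J)\to\gr^2(\OOO,J)\to\gr^{2,0}(\OOO,J)\to 0
\]
are $(2P^\sharp)$ and $\bigl(-1+\frac{m-1}{2}P^\sharp+Q^\sharp\bigr)$, but this is false. One has $\gr^{2,0}(\OOO,J)=\LLL$, of degree $0$, equal to $\bigl(\frac{m-1}{2}P^\sharp+Q^\sharp\bigr)$ or $\bigl(\frac{m-1}{2}P^\sharp\bigr)$; and $\gr^{2,1}(\OOO,J)\simeq\MMM^{\totimes 2}$ (resp.\ $\MMM^{\totimes 2}\totimes(Q^\sharp)$), of degree $-1$, equal to $(-1+2P^\sharp)$ (resp.\ $(-1+2P^\sharp+Q^\sharp)$). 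So even if \cite[(2.6)]{Kollar-Mori-1992} splits this sequence, the resulting $\ell$-splitting is the naive one --- case $(*_1)$ or $(*_3)$ in the paper's list --- and \emph{not} the desired $(*_4)=(2P^\sharp)\toplus\bigl(-1+\frac{m-1}{2}P^\sharp+Q^\sharp\bigr)$. The paper's actual argument is different: using \cite[(2.8)]{Kollar-Mori-1992} it enumerates the four a priori possible $\ell$-splittings $(*_1)$--$(*_4)$ of a rank-$2$ sheaf with these local invariants, and then rules out $(*_1),(*_2)$ (and, for $m\ge 7$, $(*_3)$) by computing $\gr^2(\omega,J)$ and $\gr^3(\omega,J)$ and deriving $H^1(\omega/F^N(\omega,J))\neq 0$ and hence $2\le(-K_X\cdot V)$ contradictions, exactly as in your ordinariness step. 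This elimination is the substance of Step \xref{(2.13.10.4)} and is missing from your sketch.
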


\begin{proof}
Proof will be given in a few steps. 
First by Lemma \ref{(2.13.9)} 
we have $\qldeg(\MMM,P) \neq (m-1)/2$.

\begin{step}[{\cite[(2.13.10.1)]{Kollar-Mori-1992}}]
\label{(2.13.10.1)} 
\textbf{Claim:} $P$ is ordinary.

Assuming that $P$ is not 
ordinary, we will derive a contradiction. We may assume 
$\alpha \equiv y_{1}y_{3} \mod (y_{2},y_{3},y_{4})^{2}$ 
by Lemma \ref{(2.13.1)}. 
Thus $y_{2}$ and $y_{4}$ form an 
$\ell$-free $\ell$-basis of $\gr^{1}_{C}\OOO$ at $P$, 
and we may assume that they are $\ell$-free $\ell$-bases of $\LLL$ and $\MMM$, respectively because $\qldeg(\MMM,P) \neq (m-1)/2$. Hence $\MMM \simeq (-1+Q^\sharp)$. By the deformation $\alpha+ty^{2}_{4}$ \cite[(2.9.2)]{Kollar-Mori-1992}, 
see also Remark \ref{rem-def}, 
we may assume that $I\supset J$ has a $(1,2,2)$-monomializing $\ell$-basis $(y_{4},y_{2},y_{3})$ at $P$. We 
may further assume that $Q$ is an ordinary point by \cite[(2.9.2)]{Kollar-Mori-1992}. Hence 
$\LLL \simeq (\frac{m-1}2P^\sharp+Q^\sharp)$ and $\gr^{2,1}(\OOO,J) \simeq \MMM^{\totimes2}\totimes(P^\sharp) \simeq (-1+P^\sharp)$. 
Therefore, by \cite[(8.12)(ii)]{Mori-1988}
\[
\begin{array}{ll}
\gr^{1}(\omega,J) &\simeq \MMM\totimes\gr^{0}_{C}\omega \simeq (-1+\frac{m-1}2P^\sharp), 
\\[10pt]
\gr^{2,0}(\omega,J)&\simeq \LLL\totimes\gr^{0}_{C}\omega \simeq ((m-1)P^\sharp), 
\\[10pt] 
\gr^{2,1}(\omega,J)&\simeq \gr^{2,1}(\OOO,J)\totimes\gr^{0}_{C}\omega \simeq (-2+\frac{m+1}2P^\sharp+Q^\sharp), 
\\[10pt] 
\gr^{3,0}(\omega,J)&\simeq \gr^{2,0}(\omega,J)\totimes \MMM \simeq (-1+(m-1)P^\sharp+Q^\sharp), 
\\[10pt] 
\gr^{3,1}(\omega,J)&\simeq \gr^{2,1}(\omega,J)\totimes \MMM \simeq (-2+\frac{m+1}2P^\sharp). 
\end{array}
\]
Hence, $H^i(\gr^{1}(\omega,J))=0$, $i=1,\, 2$. 
>From the exact sequences 
\[
0\to \gr^{n,1}(\omega,J) \to \gr^{n}(\omega,J) \to 
\gr^{n,0}(\omega,J) \to 0, \quad n=2,\ 3
\]
we obtain 
$H^1(\gr^{2}(\omega,J))=H^1(\gr^{3}(\omega,J))=\CC$.
Finally, from the exact sequences
\eqref{eq-omegaJF} follows 
$H^i(\omega /F^{2}(\omega,J))=0$, $i=1,\, 2$, \ 
$H^1(\omega /F^{3}(\omega,J))=\CC$, and
$H^1(\omega /F^{4}(\omega,J))\neq 0$.
By \cite[Theorem (4.4)]{Mori-Prokhorov-2008} we have 
$V:=\Spec_X \OOO_X/F^4(\OOO,J)\supset f^{-1}(o)$.
Hence $2=(-K_X\cdot f^{-1}(o))\le (-K_X\cdot V)=6/(2m)$,
a contradiction.
Thus $P$ is ordinary as claimed. 
\end{step}

\begin{step}[{\cite[(2.13.10.2)]{Kollar-Mori-1992}}]
\label{(2.13.10.2)} 
\textbf{Claim:} $m\ge 5$.

Assume that $m=3$. Then $\qldeg(\MMM,P)=1$ 
because 
$\qldeg(\MMM,P) \equiv - \wt\ y_{3}\equiv 1$. 
This contradicts the original assumption that 
$\qldeg(\MMM,P) \neq(m-1)/2=1$.
Thus $m\ge 5$ as claimed.
\end{step}

\begin{step}[{\cite[(2.13.10.3)]{Kollar-Mori-1992}}]
\label{(2.13.10.3)} 
Since $\gr^{1}_{C}\OOO$ has an $\ell$-free $\ell$-basis $\{y_{2},y_{3}\}$ at $P$, the 
assertions on $\ell$-bases of $\LLL$ and $\MMM$ at $P$ follow. 
Therefore
$(y_3,y_2)$ is a $(1,2)$-monomializing $\ell$-basis for
$I \supset J$ at $P$ because
$I^\sharp=(y_3,y_2)$ and $J^\sharp=(y_3^2,y_2)$ at $P$.

Since $\gr^{1}_{C}\OOO$ has an $\ell$-free $\ell$-basis $\{z_{2},z_{3}\}$ (resp. $\{z_{2},z_{4}\}$) at $Q$ if $k=1$ 
(resp. $k\ge 2$), the 
assertions on $\ell$-bases of $\LLL$ and $\MMM$ at $Q$
follow from $\qldeg(\MMM,Q)=1$
(see Remark \xref{(2.13.8.1)}) possibly after a change of
coordinates. 
Thus \eqref{eq-IAIA-LM} is settled.

Assume $k=1$. Then $Q$ is ordinary, 
$I^\sharp=(z_2,z_3)$, and $J^\sharp=(z_2^2,z_3)$ at $Q$,
whence $(z_3,z_2)$ is a $(1,2)$-monomializing $\ell$-basis. In particular, 
$\gr^{2,1}(\OOO,J) \simeq \MMM^{\totimes2}$.

Thus we only have to show that $(z_{2},z_{4},z_{3})$ is a $ 
(1,2,2)$-monomializing $\ell$-basis of $I\supset J$ 
assuming $k\ge 2$. 
Hence $J^\sharp =(z^{2}_{2},z_{3},z_{4})$ and
$\beta \equiv z_{1}z_{3}+cz^{2}_{2} \mod J^\sharp I^\sharp$ for 
some $c \in {\CC}$. If $c=0$, then $z_{3} \in F^{3}(\OOO,J)$ and $\gr^{2,1}(\OOO,J) \simeq \MMM^{\totimes2}$, 
whence 
\[
\begin{array}{ll}
\gr^{2,0}(\omega,J)&\simeq \LLL\totimes\gr^{0}_{C}\omega \simeq (-1+(m-1)P^\sharp+Q^\sharp), 
\\[10pt] 
\gr^{2,1}(\omega,J)&\simeq \MMM^{\totimes2}\totimes\gr^{0}_{C}\omega \simeq (-2+\frac{m+3}{2}P^\sharp+Q^\sharp). 
\end{array}
\]
As in the Step \ref{(2.13.10.1)} we get
$H^{1}(\omega /F^{3}(\omega,J)) \neq 0$
which implies a contradiction.
Thus $c \neq0$ and 
the assertion on $\ell$-basis is proved. In particular, the assertion on $\beta$ follows. 
So if $k \ge 2$, then $c \not=0$ and $z_3$ is an $\ell$-free
$\ell$-basis of $\gr^{2,1}(\OOO,J)$ and
$\gr^{2,1}(\OOO,J)\simeq \MMM^{\totimes2}\totimes(Q^\sharp)$.
\end{step}

\begin{step}[{\cite[(2.13.10.4)]{Kollar-Mori-1992}}]
\label{(2.13.10.4)}
Hence by \eqref{eq-IAIA-LM},
there are two cases:
\[
\LLL=
\begin{cases}
\left(\frac{m-1}{2} P^\sharp+Q^\sharp\right)
\\
\left(\frac{m-1}{2} P^\sharp\right)
\end{cases}
\hspace{3pt}
\gr^{2,1}(\OOO,J)=
\left\{
\begin{array}{lll}
(-1+2P^\sharp)& \text{if $k=1$,}
\\
(-1+2P^\sharp+Q^\sharp)& \text{if $k\ge 2$,}
\end{array}\right.
\]
Thus from the exact sequence
\[
0 \to \gr^{2,1}(\OOO,J) \to 
\gr^{2}(\OOO,J) \to \LLL \to 0,
\]
we have
$\gr^{2}(\OOO,J) \simeq \OOO_{C}\oplus \OOO_{C}(-1)$ as $\OOO_{C}$-modules and
one of the following 
holds \cite[(2.8)]{Kollar-Mori-1992}:
\[
\gr^{2}(\OOO,J)=
\left\{
\begin{array}{llll}
\left(\frac{m-1}{2} P^\sharp+Q^\sharp\right)& 
\toplus& \left(-1+2 P^\sharp\right)&(*_1)
\\[5pt]
\left(2 P^\sharp+Q^\sharp\right)&
\toplus&\left(-1+\frac{m-1}{2} P^\sharp\right)&(*_2)
\\[5pt]
\left(\frac{m-1}{2} P^\sharp\right)& 
\toplus& \left(-1+2 P^\sharp+Q^\sharp\right)&(*_3)
\\[5pt]
\left(2 P^\sharp\right)&
\toplus&\left(-1+\frac{m-1}{2} P^\sharp+Q^\sharp\right)&(*_4)
\end{array}\right.
\]
Note also that 
$\gr_C^0\omega=(-1+\frac{m-1}{2}P^\sharp+Q^\sharp)$.
To determine 
the $\ell$-splitting of $\gr^{2}(\OOO,J)$, 
it is enough 
to disprove the $\ell$-isomorphisms $(*_1)$,
$(*_2)$, $(*_3)$ 
when $m\ge 7$, and $(*_1)$,
$(*_2)$ when $m=5$.
Then 
\[
\gr^{2}(\omega,J)=
\left\{
\begin{array}{llll}
\left((m-1)P^\sharp\right)
&\toplus&(-2+\frac{m+3}{2}P^\sharp+Q^\sharp)&(*_1)
\\[10pt]
\left(\frac{m+3}{2}P^\sharp\right)
&\toplus&(-2+(m-1)P^\sharp+Q^\sharp)&(*_2)
\\[10pt]
(-1+(m-1)P^\sharp+Q^\sharp) 
&\toplus& (-1+\frac{m+3}{2}P^\sharp)&(*_3)
\end{array}\right.
\]
Since 
$\gr^1(\omega, J)=\gr^1(\OOO,J)\totimes \omega
=\left(-1+\frac{m+1}{2} P^\sharp\right)$,
$H^i(\gr^1(\omega, J))=0$ for $i=0$, $1$.

In the first two cases $(*_1)$ and $(*_2)$ 
one has $H^1(\gr^{2}(\omega,J))\neq 0$.
As in the Step \ref{(2.13.10.1)} we get
$H^{1}(\omega /F^{3}(\omega,J)) \neq 0$
which implies a contradiction.
In the case $(*_3)$, one has
$H^i(\gr^2(\omega, J))=0$ for $i=0$, $1$,
and a computation similar to one in the Step \ref{(2.13.10.1)} shows
\[
\textstyle
\gr^{3}(\omega,J)
\simeq \gr^{2}(\omega,J) \totimes \MMM
\simeq
(0) \toplus\left(-2+\frac{m+5}{2}P^\sharp+Q^\sharp\right).
\]
If $m \ge 7$, again as in the Step \ref{(2.13.10.1)} we get
$H^{1}(\omega /F^{4}(\omega,J)) \neq 0$
which implies a contradiction.
Thus \eqref{eq-gr2OJ712} holds.
\end{step}
\end{proof}

\begin{lemma}[{\cite[(2.13.11)]{Kollar-Mori-1992}}]
\label{(2.13.11)}
We use the notation and assumptions of Lemma \xref{(2.13.10)}. 
Then
$H^{0}(\OOO(-K_{X}))=H^{0}(F^{2}(\omega {^*},J))$ and 
a general section $s$ of $H^{0}(\OOO(-K_{X}))$ 
induces a section $\ov{s}$ of $\gr^{2}(\omega {^*},J)$ such that

\begin{substatement}[{\cite[(2.13.11.1)]{Kollar-Mori-1992}}]
\label{(2.13.11.1)} 
$\ov{s}$ generates $\LLL\totimes\gr^{0}_{C}\omega {^*} \subset \gr^{1}_{C}\omega {^*}$ at $P$, and 
\end{substatement}

\begin{substatement}[{\cite[(2.13.11.2)]{Kollar-Mori-1992}}]
\label{(2.13.11.2)} 
if $m\ge 7$ then $\ov{s}$ is a global generator of $(0)$ 
in the $\ell$-splitting of \eqref{eq-gr2OJ712} 
\[
\textstyle
\gr^{2}(\omega {^*},J) \simeq (0) 
\toplus\left(-1+
\frac{m+5}{2}P^\sharp+Q^\sharp\right). 
\]
If $m=5$, the same assertion holds possibly after changing 
the $\ell$-splitting of $\gr^{2}(\omega {^*},J)$. 
\end{substatement}
\end{lemma}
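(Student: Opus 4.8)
\emph{Proof strategy.} The plan is to follow \cite[(2.13.11)]{Kollar-Mori-1992}, with Corollary \xref{surj_on_omega} (for $n=2$) in place of the vanishing $H^{1}(X,\omega_{X})=0$ used there. The first point is that $H^{0}(\OOO(-K_{X}))=H^{0}(F^{2}(\omega{^*},J))$. Indeed, from the $\ell$-data of Lemma \xref{(2.13.10)} one has $\gr^{1}(\OOO,J)=I_{C}/J\simeq\MMM$, so $\gr^{1}(\omega{^*},J)\simeq\MMM\totimes\gr^{0}_{C}(\omega{^*})$, and $\ql_{C}(\gr^{0}_{C}\omega{^*})=-\ql_{C}(\gr^{0}_{C}\omega)$; a short $\TL$-computation (using $\MMM=(-1+P^\sharp+Q^\sharp)$, $\gr^{0}_{C}\omega=(-1+\frac{m-1}{2}P^\sharp+Q^\sharp)$ and $m\ge5$) shows $\gr^{0}_{C}(\omega{^*})$ and $\gr^{1}(\omega{^*},J)$ have negative degree, hence vanishing $H^{0}$, and the exact sequences $0\to F^{1}_{C}\omega{^*}\to\omega{^*}\to\gr^{0}_{C}\omega{^*}\to0$ and $0\to F^{2}(\omega{^*},J)\to F^{1}_{C}\omega{^*}\to\gr^{1}(\omega{^*},J)\to0$ give the claim.

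Next I would produce the required section. Take $D=\{y_{1}+h=0\}/\muu_{m}$ with general $h$ of weight $\wt y_{1}$, as in \xref{ppar-53}; then $D\in|-2K_{X}|$ by $(D\cdot C)=1/m$, the map $g=f_{D}\colon D\to Z$ is finite, and $\omega_{D}\simeq\OOO_{D}(-K_{X})$ by adjunction, so Corollary \xref{surj_on_omega} gives $f_{*}\OOO_{X}(-K_{X})=f_{*}\omega_{X}(D)\twoheadrightarrow(g_{*}\omega_{D})/\omega_{Z}$. One then describes $g_{*}\omega_{D}/\omega_{Z}$ near $P$ and $Q$, exactly in the style of \xref{ppar-53} and of the \type{IC}, \type{IIB} cases. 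At the ordinary point $P$, $D^\sharp$ is smooth with coordinates $y_{2},y_{3}$, the residue matches the $\ell$-basis $dy_{2}\wedge dy_{3}$ of $\omega_{D^\sharp}$ with $(\mathrm{unit})/\Omega$, and --- since $(Z,o)$ is smooth, so that every $\muu_{m}$-invariant function vanishing at $P$ lies in the square of the maximal ideal --- one gets $g^{*}\omega_{Z}\subset(y_{2},y_{3})^{2}\omega_{D^\sharp}$ near $P$, whence $f_{*}\OOO_{X}(-K_{X})\twoheadrightarrow\omega_{D}/(y_{2},y_{3})^{2}\omega_{D^\sharp}$ near $P$. The analogous computation at $Q$ (using the $(1,2)$- or $(1,2,2)$-monomializing $\ell$-basis of Lemma \xref{(2.13.10)}) places $g^{*}\omega_{Z}$ inside the corresponding square submodule. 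Hence a general $s\in H^{0}(\OOO(-K_{X}))=H^{0}(F^{2}(\omega{^*},J))$ induces at $P$ an element $\equiv(\mathrm{unit})\cdot y_{2}/\Omega\bmod(y_{2},y_{3})^{2}\omega_{D^\sharp}$.

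Finally the conclusions follow: the induced $\ov{s}\in H^{0}(\gr^{2}(\omega{^*},J))$ has image in $\gr^{2,0}(\omega{^*},J)=\LLL\totimes\gr^{0}_{C}(\omega{^*})$ generating it at $P$ (as $y_{2}$ is the $\ell$-basis of $\LLL$ there), which is \xref{(2.13.11.1)}; and tensoring \eqref{eq-gr2OJ712} by $\gr^{0}_{C}(\omega{^*})$ gives $\gr^{2}(\omega{^*},J)\simeq(0)\toplus(-1+\frac{m+5}{2}P^\sharp+Q^\sharp)$, whose second summand has negative degree when $m\ge7$, so $H^{0}(\gr^{2}(\omega{^*},J))=\CC$ equals the $(0)$-part and the nonzero $\ov{s}$ globally generates $(0)$; for $m=5$ one first re-chooses the $\ell$-splitting of $\gr^{2}(\omega{^*},J)$ so that $\ov{s}$ generates the summand denoted $(0)$ and then argues as before, proving \xref{(2.13.11.2)}.

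The main obstacle is the local analysis in the middle step: controlling $g^{*}\omega_{Z}$ inside $g_{*}\omega_{D}$ near $P$ and, especially, near the (possibly singular) point $Q$, via the residue/adjunction calculus and an explicit choice of a regular system of parameters on $(Z,o)$ among the invariant functions vanishing on $C$. The $\TL$-bookkeeping in the first step and the degenerate case $m=5$ in the last are routine but require care.
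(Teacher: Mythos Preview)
Your approach is essentially the paper's: the vanishing $H^{0}(\gr^{0}(\omega^{*},J))=H^{0}(\gr^{1}(\omega^{*},J))=0$, the choice $D=\{y_{1}=0\}/\muu_{m}\in|-2K_{X}|$ (the perturbation by $h$ is harmless but unnecessary since $P$ is now ordinary), Corollary~\ref{surj_on_omega} giving a lift of $y_{2}/\Omega$ modulo $(y_{2},y_{3})^{2}\omega_{D^{\sharp}}$, and the degree count for $m\ge7$. Two remarks:

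\textit{First}, the local analysis at $Q$ is not needed for this lemma. The statement \ref{(2.13.11.1)} is a condition at $P$ only, and \ref{(2.13.11.2)} is a global statement that follows from the $\ell$-splitting and the single fact that $\ov{s}(P)\neq0$ (for $m\ge7$). The paper does not touch $Q$ here; that work is deferred to Lemma~\ref{(2.13.12)}.

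\textit{Second}, your treatment of $m=5$ has a genuine gap. When $m=5$ the second summand is $(-1+5P^{\sharp}+Q^{\sharp})=(Q^{\sharp})$, of degree $0$, so both summands of $\gr^{2}(\omega^{*},J)\simeq(0)\toplus(Q^{\sharp})$ have $H^{0}\simeq\CC$. Knowing only that $\ov{s}(P)$ lies in the $y_{2}/\Omega$ direction does \emph{not} preclude $\ov{s}\in H^{0}((Q^{\sharp}))$: both $\ell$-summands have weight $\equiv0$ at $P$, and the $(Q^{\sharp})$-direction at $P$ could a priori coincide with $y_{2}/\Omega$. You cannot simply ``re-choose the $\ell$-splitting so that $\ov{s}$ generates $(0)$'' without first excluding $\ov{s}\in(Q^{\sharp})$. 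The paper closes this gap by also lifting $y_{3}^{2}/\Omega\in\OOO_{D}(-K_{X})$ via Corollary~\ref{surj_on_omega}, thereby obtaining the surjection $H^{0}(\OOO(-K_{X}))\twoheadrightarrow\gr^{2}(\omega^{*},J)\otimes\CC(P)$; then a \emph{general} $s$ has $\ov{s}(P)$ general in the two-dimensional fibre, hence $\ov{s}\notin H^{0}((Q^{\sharp}))$, and the re-splitting becomes legitimate.
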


\begin{proof}
We see $H^{0}(\OOO(-K_{X}))=H^{0}(F^{2}(\omega {^*},J))$ 
by $H^{0}(\gr^{0}(\omega {^*},J))=
H^{0}(\gr^{1}(\omega {^*},J)) =0$
(see Lemma \xref{(2.13.10)}). 
Let $D=\{y_{1}=0\}/\muu_{m} \in |-2K_{X}|$ and let 
$\Omega$ be an $\ell$-free $\ell$-basis of $\gr^{0}\omega$ at $P$. 
As in \ref{ppar-53} by
Corollary \xref{surj_on_omega},
$y_{2}/\Omega \in \OOO_{D}(-K_{X})$ 
lifts modulo $\OOO_{D^\sharp}(y_2,y_3)^2dy_2\wedge dy_3$
to a section of $H^{0}(F^{2}(\omega {^*},J))$. Since $y_{2}$ is a part of an $\ell$-free $\ell$-basis of 
$\gr^{2}(\OOO,J)$, we see that $\ov{s}$ is non-zero. 
If $m\ge 7$, 
then $\ov{s}$ must generate $(0)$ because 
$H^{0}(C,(-1+\frac{m+5}{2}P^\sharp+Q^\sharp))=0$. 
If $m=5$, we see as above
\[
H^{0}(\OOO(-K_{X})) 
\twoheadrightarrow \gr^{2}(\omega {^*},J)\otimes {\CC}(P)
\]
using $y^{2}_{3}/\Omega \in 
\OOO_{D}(-K_{X})$. 
Then general $s$ satisfies $\ov{s} \notin H^{0}(C,(Q^\sharp))$ in the $\ell$-splitting of $\gr^{2}(\omega {^*},J)$ 
and we have the same conclusion. 
\end{proof}

\begin{lemma}[{\cite[(2.13.12)]{Kollar-Mori-1992}}]
\label{(2.13.12)}
We assume the notation and assumptions of 
Lemma \xref{(2.13.10)}. 
In particular, we assume $m \ge 5$.
Then the case \xref{(2.2.3)} holds. 
\end{lemma}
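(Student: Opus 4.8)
The plan is to produce a general $E_X\in|-K_X|$ explicitly from Lemma \ref{(2.13.11)} and identify its singularities. First I would take a general $s\in H^0(\OOO_X(-K_X))=H^0(F^2(\omega^*,J))$ and set $E_X=\{s=0\}$. By Lemma \ref{(2.13.11)} the induced section $\ov{s}$ of $\gr^2(\omega^*,J)$ is a global generator of the free summand $(0)$ in the $\ell$-splitting coming from \eqref{eq-gr2OJ712},
\[
\gr^2(\omega^*,J)\simeq(0)\toplus\left(-1+\frac{m+5}{2}P^\sharp+Q^\sharp\right)
\]
(with the splitting chosen suitably when $m=5$, where the second summand is $(Q^\sharp)$). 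Since the second summand carries no nonzero global section, $\ov{s}$ is nowhere vanishing on $C$; because $s$ vanishes on $C$ (as $s\in F^2(\omega^*,J)\subset I_C\omega^*$), this forces $C\subset E_X$, and the non-vanishing of $\ov{s}$ — generating the laminal direction $\LLL$ at every point of $C$ — shows $E_X$ is smooth along $C$ away from $P$ and $Q$.

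Next I would carry out the two local analyses. At $P$: by \ref{(2.13.11.1)}, $\ov{s}$ generates $\LLL\totimes\gr^0_C\omega^*\subset\gr^1_C\omega^*$ and $y_2$ is an $\ell$-basis of $\LLL$ at $P$; since $s\in J\omega^*$ near $P$, the defining equation of $E_X$ is $y_2$ up to a unit and terms in $J^\sharp I_{C^\sharp}$, so eliminating $y_2$ gives $(E_X,P)\simeq(y_1,y_3)/\muu_m(1,-1)$, an $A_{m-1}$ point through which $C$ runs as a coordinate axis. At $Q$: by tracking weights through \eqref{eq-gr2OJ712} one determines which of the $\ell$-bases at $Q$ the section $\ov{s}$ generates and eliminates the corresponding coordinate from $\beta$ and from $s$. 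For $k\ge2$, where $\beta\equiv z_1z_3+z_2^2$ modulo higher $J$-order by the normalization in Lemma \ref{(2.13.10)} — so that the analogue of the ``$c_1\neq0$'' point of Lemma \ref{(2.13.5)} is automatic — this produces $(E_X,Q)\simeq(z_1,z_2,z_3;\ov{\beta})/\muu_2(1,1,1)$, which, after normalizing $\ov{\beta}$ by units and $\muu_2$-automorphisms fixing $C$, is recognized via an explicit resolution computation (the analogue of Computation \ref{(2.13.6)}) as the $D_{2k}$ singularity together with its graph; here the type of $Q$ ($cA/2$, $cAx/2$ or $cD/2$) and the axial multiplicity $k$ enter. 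The degenerate case $k=1$, where $\beta=z_4$ and $Q$ is ordinary, requires separate bookkeeping and yields the $D_2=A_1+A_1$ picture.

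Finally, since $(E_X,P)=A_{m-1}$ and $(E_X,Q)=D_{2k}$ are both Du Val, $E_X$ is Du Val; and since $(K_X\cdot C)=0$, contracting $C$ shows $E_Z=\Spec_Z f_*\OOO_{E_X}$ is Du Val with the same minimal resolution, so $\Delta(E_Z,o')=\Delta(E_X,C)$. Splicing the $A_{m-1}$-chain at $P$ (with $\tilde{C}$ attached to its end) to the $D_{2k}$-configuration at $Q$ along $\tilde{C}$ reproduces exactly the $D_{2k+m}$ graph displayed in Case \ref{(2.2.3)}. I expect the main obstacle to be precisely this $Q$-analysis: extracting the correct normal form of $\ov{\beta}$ from the vanishing of $s$ inside $F^2(\omega^*,J)$, and then the resolution bookkeeping confirming the $D_{2k}$ type and the exact attachment point of $\tilde{C}$ (and handling the $k=1$ degeneration). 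It is here that the $\QQ$-conic-bundle substitute for $H^1(\omega_X)$-vanishing, namely Corollary \ref{surj_on_omega} as used throughout Lemma \ref{(2.13.11)}, does the essential work.
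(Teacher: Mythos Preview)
Your overall strategy matches the paper's, but there is a genuine gap in the $k=1$ case, and it stems from a misreading of what ``$\ov{s}$ nowhere vanishing in $\gr^2(\omega^*,J)$'' buys you.

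The non-vanishing of $\ov{s}$ in $\gr^2(\omega^*,J)$ does \emph{not} imply that $E_X$ is smooth along $C$ away from $P$ and $Q$. Smoothness of $E_X$ at a point of $C$ is governed by the image of $\ov{s}$ in $\gr^1_C\omega^*$, i.e.\ by the projection to $\gr^{2,0}(\omega^*,J)=\LLL\totimes\gr^0_C\omega^*$, not by the full $\gr^2(\omega^*,J)$. When $k=1$ one has $\LLL\totimes\gr^0_C\omega^*\simeq(1)\simeq\OOO_C(1)$, so the image of $\ov{s}$ there is a section of a degree-$1$ line bundle and must vanish at exactly one point $R\neq P,Q$. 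At that $R$, in local coordinates with $J=(w_2,w_3^2)$, one finds $\Omega s\equiv u\,w_1w_2+v\,w_3^2 \bmod (w_2,w_3^2)(w_2,w_3)$ with $u,v$ units (the $w_3^2$-term survives precisely because $\ov{s}$ is a free-basis element of $\gr^2(\OOO,J)$ at $R$), so $(E_X,R)$ is an $A_1$ point. Meanwhile $(E_X,Q)\simeq(z_1,z_2)/\muu_2(1,1)$ is a \emph{single} $A_1$. The ``$D_2=A_1+A_1$'' you want therefore has one $A_1$ at $Q$ and the other at the extra point $R$; it does not arise from $Q$ alone. Your sentence ``$E_X$ is smooth along $C$ away from $P$ and $Q$'' is false for $k=1$, and your $k=1$ paragraph does not locate the second $A_1$.

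Two smaller points. First, for $m=5$ the second summand $(Q^\sharp)$ \emph{does} have a nonzero global section, so ``the second summand carries no nonzero global section'' is wrong there; the correct input is just the conclusion of Lemma \ref{(2.13.11)} that $\ov{s}$ generates the $(0)$-summand (after adjusting the splitting). Second, for $k\ge 2$ the paper's local analysis at $Q$ eliminates $z_3$ (since $\Omega s\equiv u z_1z_4+v z_3\bmod J^\sharp I^\sharp$) and obtains $(E_X,Q)\simeq(z_1,z_2,z_4;\ov{\beta})/\muu_2(1,1,0;0)$ with $\ov{\beta}\equiv z_1^2z_4+z_2^2$; your stated model $(z_1,z_2,z_3)/\muu_2(1,1,1)$ has the wrong surviving coordinate and the wrong weights, though once corrected the rest of your outline (reducing to Computation \ref{(2.13.6)}) is exactly what the paper does.
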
 

\begin{proof}
Let $s \in H^{0}(\OOO(-K_{X}))$ be a general section. If $m=5$, we 
change the $\ell$-splitting of $\gr^{2}(\OOO,J)$ for which 
Lemma \xref{(2.13.11)} holds. 
Depending on the value of $k$, we treat two cases.

\begin{case}[$k=1$, {\cite[(2.13.12.1)]{Kollar-Mori-1992}}]
\label{(2.13.12.1)} 
We claim that the image of $\ov{s}$ in $\gr^{1}_{C}\omega {^*}$ generates $\LLL\totimes\gr^{0}_{C}\omega {^*} \simeq (1)$ ($\subset \gr^{1}_{C}\omega {^*}$) at $P$ and $Q$ and vanishes 
at some point $R$ $(\neq P, Q$). Indeed, the generation at $P$ is proved in 
Lemma \xref{(2.13.11)}. If $\ov{s}$ does not generate $\LLL\totimes\gr^{0}_{C}\omega {^*}=\gr^{2,0}(\omega {^*},J)$ at $Q$, $\ov{s}$ is not a part of an $\ell$-free $\ell$-basis of $\gr^{2,0}(\omega {^*},J)$ at $Q$ 
because 
\[
\textstyle
\qldeg(\gr^{2,1}(\omega {^*},J),Q)=\qldeg
\left(\MMM^{\totimes2}\totimes\gr^{0}_{C}\omega {^*},Q\right) 
=1 \neq 0. 
\]
This
contradicts Lemma \xref{(2.13.11)} and our claim is proved. 

Then it is easy to see 
that $E_{X}=\{s=0\} \in |-K_{X}|$ is smooth outside of $P, Q$ and $R$. Moreover, $(E_{X},P) \simeq (y_{1},y_{3})/\muu_{m}(1,-1)$ and 
$(E_{X},Q) \simeq (z_{1},z_{2})/\muu_{2}(1,1)$. We choose 
coordinates at $R$ so that 
$(X,R)=(w_{1},w_{2},w_{3})\supset (C,R)=w_{1}$-axis, and $J=(w_{2},w^{2}_{3})$ at $R$. Using a generator $\Omega$ of $\OOO(K_{X})$ at $R$, we see 
$\Omega s \equiv uw_{1}w_{2} \mod (w_{2},w_{3})^{2}$ 
for some unit $u$
because $\ov{s}$ vanishes at $R$ to order 1. Since 
$\Omega s$ is a part of a free basis of 
$\gr^{2}(\OOO,J)$ at $R$, we have 
\[
\Omega s \equiv u w_{1}w_{2} 
+vw^{2}_{3} \mod (w_{2},w^{2}_{3})(w_{2},w_{3})
\]
for some unit $v$. Thus $(E_{X},Q)$ 
is an $A_{1}$ point and we are done in case $k=1$. 
\end{case}

\begin{case}[$k\ge 2$, {\cite[(2.13.12.2)]{Kollar-Mori-1992}}]
\label{(2.13.12.2)}
We see that the image of $\ov{s}$ in $\gr^{1}_{C}\omega {^*}$ 
generates $\LLL\totimes\gr^{0}_{C}\omega {^*} \simeq (Q^\sharp)$ outside of $Q$ by Lemma \xref{(2.13.11)}. Then $E_{X}=\{s 
=0\} \in |-K_{X}|$ is smooth outside of $P$ and $Q$, $(E_{X},P) \simeq 
(y_{1},y_{3})/\muu_{m}(1,-1)$. Using an $\ell$-free $\ell$-basis $\Omega$ of $\OOO(K_{X})$ at $Q$, we 
see the image of $\ov{s}$ in $\gr^{1}_{C}\omega {^*}$ is $uz_{1}z_{4}/\Omega$ at $Q$, where $u$ is a unit. Since $s$ is a 
part of an $\ell$-free $\ell$-basis of $\gr^{2}(\omega {^*},J)$ at $Q$, we have 
$\Omega s \equiv uz_{1}z_{4}+
vz_{3} \mod J^\sharp I^\sharp$ 
at $Q$ for some unit $v$. 
Eliminating $z_{3}$, we see $(E_{X},Q) \simeq (z_{1},z_{2},z_{4};\ov{\beta})/\muu_{2}(1,1,0;0)$, 
where $\ov{\beta}$ satisfies 
$\ov{\beta} \equiv z^{2}_{1}z_{4}+
z^{2}_{2} \mod (z^{2}_{2},z_{4})(z_{2},z_{4})$ 
and $\ord \ov{\beta}(0,0,z_{4})=k$. 
Then we can apply 
Computation \xref{(2.13.6)}. 
\end{case}
\end{proof}

\begin{remark}
\label{rem-on-2.2.3}
We note that the case \xref{(2.2.3)} 
(\cite[(2.2.3)]{Kollar-Mori-1992}, \cite{Mori-2007}) 
comes out of two sources:
Lemma \xref{(2.13.4)} where $k \ge 2$, $m\ge 3$ and $Q$ is of type
$cA/2$, $cAx/2$ or $cD/2$, 
and Lemma \xref{(2.13.10)} where 
$m\ge 5$ and $Q$ is of type $cA/2$.

We note that Lemma \xref{(2.13.4)} assumes the case \xref{(2.13.3.2)},
where $(X^{\sharp},Q^{\sharp})$ is not smooth by $\ell(Q) >0$ and hence
the axial multiplicity $k \ge 2$.
\end{remark}

Thus the proof of Theorem \xref{(2.2)} is completed in the case \type{IA}$+$\type{IA}.

\subsection*{Acknowledgments}
The work was carried out at Research Institute for Mathematical
Sciences (RIMS), Kyoto University. The second author would like to
thank RIMS for invitation to work there in February 2008, for
hospitality and stimulating working environment.

\end{document}